\documentclass[12pt,oneside]{article}
\newcommand{\Path}{}
\newcommand{\figs}{}
\usepackage{ \Path Paper_style}
\usepackage{ \Path Keywords}
\usepackage{ \Path Environments}
\usepackage{ \Path Research_keywords}
\usepackage{tikz-cd}
\usepackage{tabularx} \newcolumntype{L}{>{\raggedright\arraybackslash}X}



\newcommand{\reconstruct}{\mathcal{T}}

\newcommand{\cocyc}{\mathcal{G}}
\newcommand{\GrowthT}{\Psi}
\newcommand{\Hbasis}{\mathfrak{w}}
\newcommand{\W}{\mathfrak{S}}
\DeclareMathOperator{\dev}{dev}
\newcommand{\mult}{ C_{\text{NUH}} }
\newcommand{\Csens}{ C_{\phi,\Phi} }
\DeclareMathOperator{\retract}{ret}
\newcommand{\Csensr}{ C_{\phi,\Phi, \retract} }
\DeclareMathOperator{\AutCor}{AutCorr}
\title{Learning theory for dynamical systems}
\author{Tyrus Berry, Suddhasattwa Das  \thanks{Department of Mathematical Sciences, George Mason University} \thanks{The research was funded by the NSF Sponsored Program fund \textbf{204839}.}}
%
\begin{document}
\maketitle
\begin{abstract} The task of modelling and forecasting a dynamical system is one of the oldest problems, and it remains challenging. Broadly, this task has two subtasks - extracting the full dynamical information from a partial observation; and then explicitly learning the dynamics from this information. We present a mathematical framework in which the dynamical information is represented in the form of an embedding. The framework combines the two subtasks using the language of spaces, maps, and commutations. The framework also unifies two of the most common learning paradigms - delay-coordinates and reservoir computing. We use this framework as a platform for two other investigations of the reconstructed system - its dynamical stability; and the growth of error under iterations. We show that these questions are deeply tied to more fundamental properties of the underlying system - the behavior of matrix cocycles over the base dynamics, its non-uniform hyperbolic behavior, and its decay of correlations. Thus, our framework bridges the gap between universally observed behavior of dynamics modelling; and the spectral, differential and ergodic properties intrinsic to the dynamics.
\end{abstract}

\paragraph{Keywords} Matrix cocycle, Lyapunov exponent, reservoir computing, delay-coordinates, mixing, direct forecast, iterative forecast 

\paragraph{Mathematics Subject Classification 2020} 37M99, 37N30, 37A20, 37D25 

\section{Introduction} \label{sec:intro}

Many investigations of physical systems involve modeling and forecasting a dynamical system, in fields as diverse as climate sciences \cite{SlawinskaGiannakis2017}, traffic dynamics \cite{DasEtAl_Alfaya_ACC_2021, RahmanHasan2020} or epidemiology \cite{MustaveeEtAl_covid_2021}. With the growth of computational power, many new techniques and paradigms of reconstructing a dynamical system have been proposed, we call this the \emph{learning problem} for dynamics. Most of the common techniques seek to recreate a dynamical system by developing conjugate or equivalent dynamical system, usually in a higher dimensional space. We present a theoretical framework which unifies these techniques. The framework, presented in the form of a commuting diagram \eqref{eqn:paradigm} of maps and operators, helps to identify and distinguish between different conceptual components of these learning. 

Figure~\ref{fig:outline} presents an outline of the paper. The primary requirement of all learning techniques is an embedding of the dynamics (Assumption~\ref{A:pPhi}), which may be explicit or implicit. We show in Section~\ref{sec:paradigm} that there are two main paradigms of learning dynamics - based on invariant graphs and delay coordinates respectively, in which the embedding is implicit and explicit respectively. We unify both these paradigms in a common abstract mathematical framework \eqref{eqn:paradigm} and show how the unknown dynamics can be reconstructed as a conjugate dynamical system \eqref{eqn:feedback_commut} in the embedding space. We next investigate the stability of these reconstructions. We show that a proper quantitative assessment of the stability is related to the original dynamics, as well as the kind of interpolation done by the learning technique.  

\begin{figure}\center
\includegraphics[width=.95\linewidth]{\figs 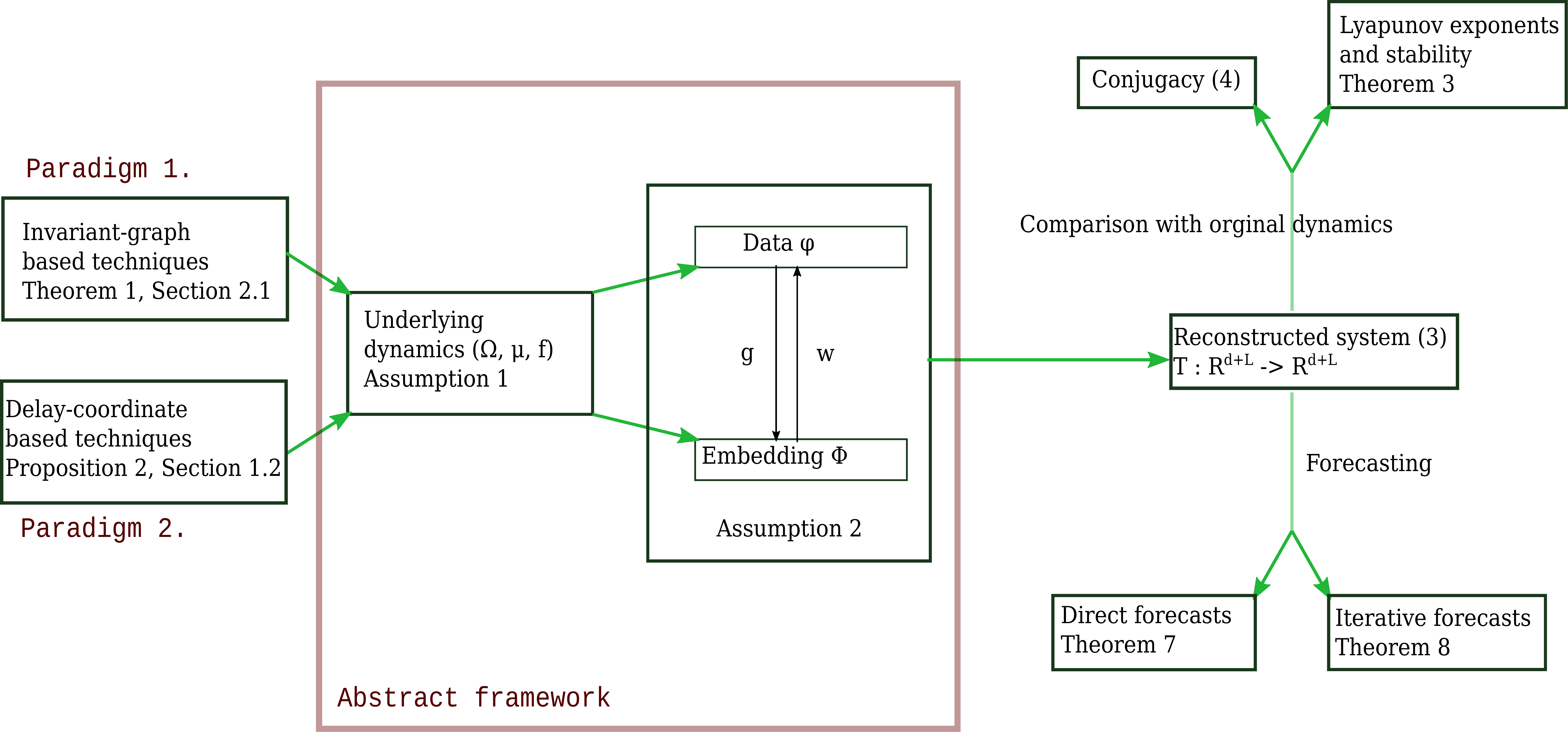}
\caption{ Outline of results and theory.}
\label{fig:outline}
\end{figure}

Another important consideration for us is the effectiveness of reconstruction models, for the purpose of forecasting. The forecasting can be of two types, \emph{direct} or \emph{iterative}. We show in Theorem~\ref{thm:direct} that the direct method is limited by the rate of decay of correlations of the  system. On the other hand, the iterative method is deeply connected to the embedding properties of the data, as well as the learning scheme employed. A key aspect of learning theory is the choice of a hypothesis space. This functional analytic consideration also fits seamlessly with our framework. We show how the rate at which the learned dynamics and the true dynamics diverge, is a combination of the intrinsic dynamical properties as well as the effectiveness of the hypothesis space. We do so using the language of matrix cocycles. See Figure~\ref{fig:compare} for a comparative illustration of two computation techniques. We do an extensive comparison of various learning techniques in tables \ref{tab:prdgm}, \ref{tab:learn}. 

\begin{figure}\center
\includegraphics[width=.48\linewidth]{\figs 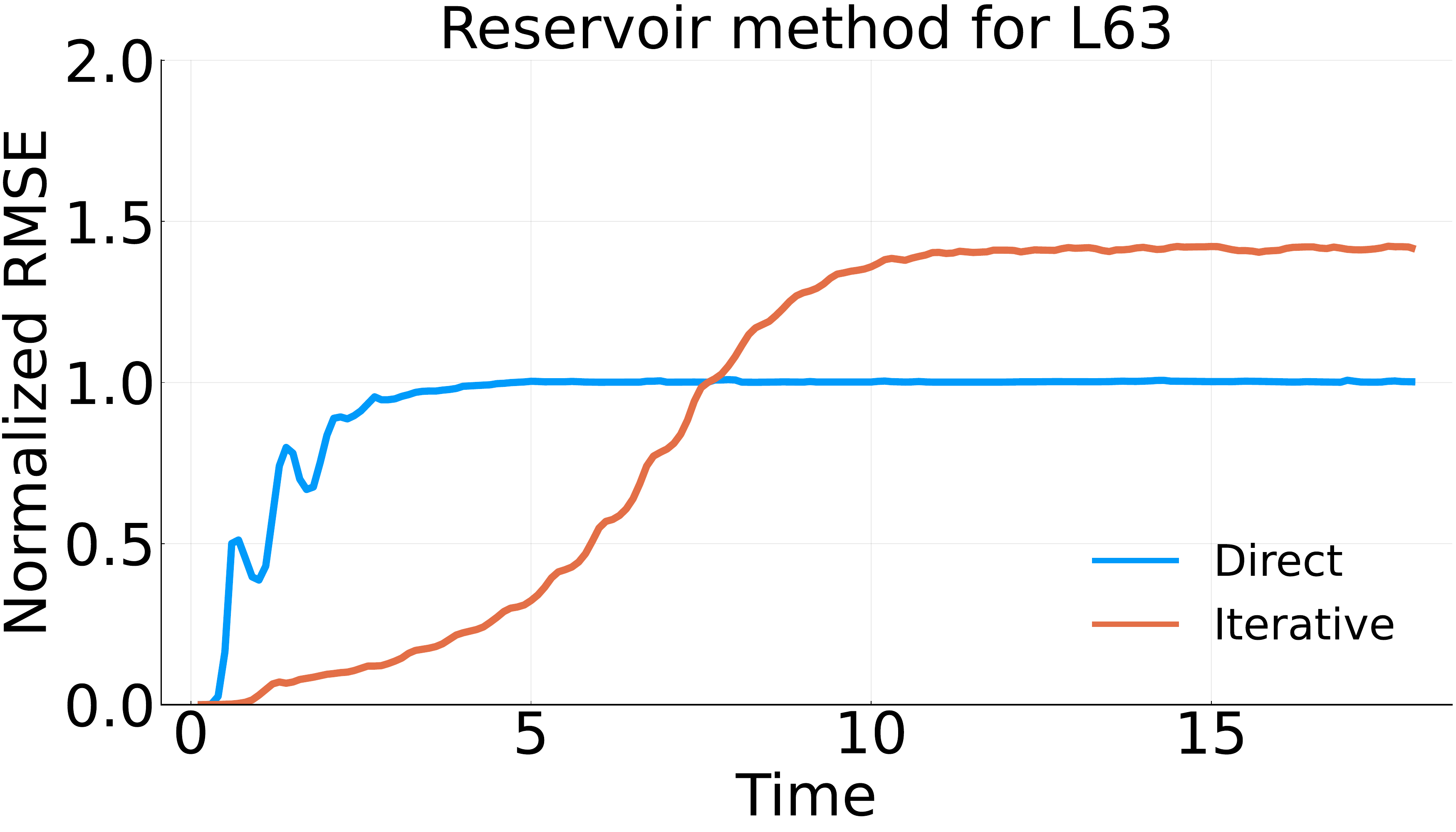}
\includegraphics[width=.48\linewidth]{\figs 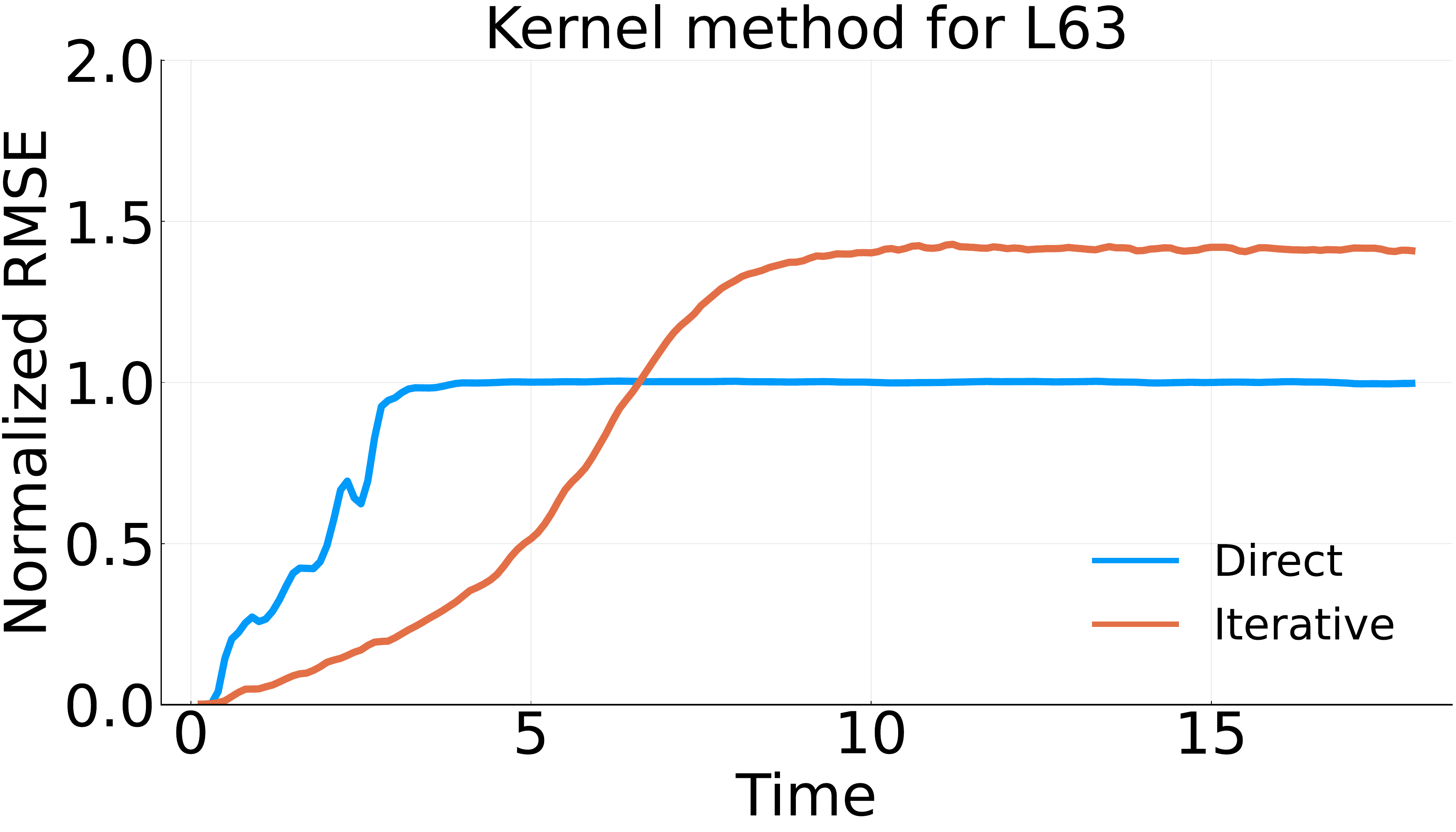}
\caption{ Results of forecasting attempts on the Loren63 system, using the invariant-graph paradigm( Section \ref{sec:inv_graph}) on the left, and the delay-coordinate based paradigm (Section \ref{sec:reg}) on the right. For both these paradigms, we compare the performance of direct and iterative modes of forecasting (see \eqref{eqn:def:err_iter}, \eqref{eqn:def:err_direct} for definitions). The horizontal axis shows the forecast-time $n$, and the vertical axis is the RMS error of forecast as a function of $n$, for a signal of unit $L^2(\mu)$-norm. The RMS error is meant to approximate the $L^2(\mu)$ norm of the error as a function of the initial state of the underlying system. The iterative errors are seen to increase and eventually settle around $\sqrt{2}$, while the error from the direct mode settles at $q$. We show that this is a universal behavior, based on a mathematical framework \eqref{eqn:paradigm} that unifies both these paradigms, and both these modes. Based on this framework, we develop Theorems \ref{thm:direct} and \ref{thm:iterative} which provide expressions for the asymptotic behavior of these errors, and are consistent with these graphs. Also see Section~\ref{sec:discuss} for an extended analysis. 
}
\label{fig:compare}
\end{figure}

\paragraph{The framework} We now build our general abstract framework by assigning mathematical objects and assumptions to various components of the entire prediction scheme. We begin with the dynamical system itself. A common practice is to assume an unknown dynamical system satisfying

\begin{Assumption} \label{A:f}
There is a $C^1$ dynamical system $f: \tilde\Omega \to \tilde\Omega$ on an $m$-dimensional $C^1$ manifold $\tilde\Omega$, with an ergodic measure $\mu$ with compact support $\Omega$.
\end{Assumption}

This minimal assumption on the underlying system allows it to be applied in a many situations. The assumption of an ergodic invariant measure with compact support is fulfilled in any system with bounded trajectories. This assumption is however only for theoretical purposes, as the system in Assumption~\ref{A:f} is usually not presented explicitly. We next provide an abstract framework which describes how the system is manifested in a data-driven setting.

\begin{Assumption} \label{A:pPhi}
There are maps $\phi:\Omega \to \real^d$ and $\Phi : \Omega\to \real^L$ such that 
\begin{enumerate} [(i)]
    \item $\Phi$ is an injective map;
    \item there is a function $g:\real^d\times \real^L \to \real^L$ such that $\Phi\circ f = g\circ \left(\phi \times \Phi\right)$.
\end{enumerate}
\end{Assumption} 

The map $\phi$ is the measurement through which the dynamical system is observed.  So the codomain of $\phi$ is often low dimensional and may be only partially observe the system. Since $\Phi$ is an embedding, it effectively serves as a representation of the dynamics-space $\Omega$ in $\real^L$ space.  The function $g$ will be known and computable explicitly. Note that $\Phi\circ f$ is the evolution of $\Phi$ under one iteration of the dynamics of $f$. Thus $g$ contains and encodes the evolution law, in terms of the current states of $\Phi$ and $\phi$. This is summarized in the left half of the commutative diagram in \eqref{eqn:paradigm}. The function $g$ is usually known explicitly, while $\Phi$ could be explicit (such as in delay-coordinate techniques Section~\ref{sec:reg}); or implicit (such as in invariant-graph based techniques, Section~\ref{sec:inv_graph}). Let $\dim_\mu$ denote the box-dimension of the invariant set supported by the measure $\mu$. We typically have
\[ d \leq \dim_\mu << L . \]
The task now is to reconstruct the dynamics using the maps $\phi, \Phi$. This reconstruction will not be via the phase space $\tilde{\Omega}$ of the dynamics but rather through various function spaces which try to capture how these maps are transformed under the dynamics. For this reason, we shall use the operator theoretic language of dynamical systems. This is done using a operator-theoretic representation of the dynamics known as the \emph{Koopman operator}.

\paragraph{Koopman operator} The Koopman operator \cite{Halmos1956, DasGiannakis_delay_2019, DasGiannakis_RKHS_2018} is a time-shift operator, it acts on observables by composition with the map $f$. The space $L^2(\Omega,\mu)$ of square-integrable, complex-valued functions will be called our \emph{space of observables}. The space $L^2(\Omega,\mu)$ can be unambiguously abbreviated as $L^2(\mu)$. For every $n\in\num$, the operator $U^n : L^2(\mu) \to L^2(\mu)$ is defined on every $h\in L^2(\mu)$ as
\[ (U^n h ) : \omega \mapsto (h \circ f^n)(\omega), \mbox{ for } \mu-a.e. \omega \in \Omega. \]
The operator $U$ and all its iterates $U^n$ are unitary maps. The constant function $1_\Omega$ is always an eigenfunction for $U$, with eigenvalue $1$. A consequence of $\mu$ being ergodic is that $1$ is a simple eigenvalue. Let $\Disc$ denote the closure in $L^2(\mu)$ of all eigenfunctions of $U$. Then one has the orthogonal decomposition
\begin{equation} \label{eqn:def:L2split}
L^2(\mu) = \Disc \oplus \Disc^\bot .
\end{equation}
The system is said to be \emph{mixing} if the space $\Disc$ consists of only constant functions. Koopman operators allow the study of arbitrary nonlinear dynamics as linear dynamics on infinite dimensional vector spaces. It has not only been used for forecasting tasks \cite[e.g.][]{MustaveeEtAl_covid_2021}, but also in tasks such as harmonic analysis of dynamics generated data \cite[e.g.][]{DasGiannakis_RKHS_2018}, control \cite{korda2020optimal}, and detection of coherent patterns \cite[e.g.][]{DasGiannakis_delay_2019, GiannakisEtAl2015}.

We are now prepared to present the reconstructed dynamics in terms of the Koopman operator.

\paragraph{Feedback function} Since $\Phi$ is an embedding (by Assumption \ref{A:pPhi}), the current state of $\Phi$ determines the current and all future states in $\Omega$ and therefore of $\phi$. Therefore for every $k\in\num$, there is a function $w_k$ such that
\begin{equation} \label{eqn:def:wk}
w_k:\real^L \to \real^d, \quad w_k \circ \Phi = U^k \phi = \phi \circ f^k. 
\end{equation}
The learning task is to learn this map $w_k$. The following diagram connects all the maps and spaces we have discussed so far.
\begin{equation} \label{eqn:paradigm}
\begin{tikzcd}[row sep = large, column sep = large]
\ &\real^d &\real^d\times \real^L \arrow[bend right = 90]{dll}{\proj_1} \arrow{r}{\proj_2} \arrow{l}{g} & \real^L \arrow{r}{w_k} &\real^d \\
\real^d &\Omega \arrow{u}{\Phi} &\Omega \arrow{u}{\phi\times\Phi} \arrow{ur}{\Phi} \arrow{rr}[swap]{f^k} \arrow[bend right = 15]{urr}{U^k \phi} \arrow{ul}{U\Phi} \arrow{l}[swap]{f} \arrow[bend left = 30]{ll}[swap]{\phi} &\ &\Omega \arrow{u}[swap]{\phi} 
\end{tikzcd}\end{equation}
We next look at the specific case when $k=1$.

\paragraph{The reconstructed system} When $k=1$, $w_k$ will be denoted as $w$. The following standalone or reconstructed dynamical system on $\real^d \times \real^L$ :
\begin{equation} \label{eqn:def:feedback_1}
\reconstruct : \real^d \times \real^L \to \real^d \times \real^L, \quad 
\left[ \begin{array}{c} u_{n+1} \\ y_{n+1} \end{array} \right]
 = \reconstruct \left[ \begin{array}{c} u_{n} \\ y_{n} \end{array} \right] 
  = \left[ \begin{array}{c} w \left( y_n \right) \\ g\left( u_{n}, y_n \right) \end{array} \right].
\end{equation}
is conjugate to the dynamics on the attractor in $\Omega$. We explore this conjugation further using the commutation diagram in \eqref{eqn:feedback_commut}. The blue loop reveals the conjugacy, the green loops are the result of Assumption~\ref{A:pPhi}, and the red loop is the result of \eqref{eqn:def:wk}.
\begin{equation}\label{eqn:feedback_commut}
\begin{tikzcd}[row sep = large, column sep = huge]
\ &\  & \green{\real^L} \\
\ & \blue{ \real^d\times\real^L } \arrow[green, bend left = 30]{ru}{g} \arrow[blue, dashed]{r}[swap]{w\times g} \arrow{dl}[swap]{\proj_2} & \blue{ \real^d\times\real^L} \arrow[green]{u}{\proj_2} \arrow{ddl}{\proj_1}\\
\itranga{\real^L} \arrow[Itranga]{dr}[swap]{w} & \blue{\Omega} \arrow[green, bend left = 10]{uur}{U\Phi} \arrow[blue, dashed]{r}{f} \arrow[blue, dashed]{u}{\phi\times\Phi} \arrow[Itranga]{l}{\Phi} \arrow[Itranga]{d}[swap]{U\phi} \arrow{ur}{U\phi \times U\Phi} & \blue{\Omega} \arrow[green, bend right = 50]{uu}[swap]{\Phi} \arrow{dl}{\phi} \arrow[blue, dashed]{u}[swap]{\phi\times\Phi} \\
\ & \itranga{\real^d} &\ 
\end{tikzcd}
\end{equation}
The system \eqref{eqn:def:feedback_1} evolves in parallel to $f:\Omega\to\Omega$, while maintaining a conjugacy via the map $\phi\times\Phi$. The state vector $z_n := \left( u_n, y_n \right)$ is interpreted as follows : $u_n$ is represents the value $\phi(\omega_n)$, which is the unknown state $\omega_n$ observed through $\phi$. $y_n\in \real^L$ is the embedded point $\Phi(\omega_n)$ in $\real^L$. With this interpretation in mind, \eqref{eqn:def:feedback_1} is to be initialized with the state vector
\begin{equation} \label{eqn:feedback_1_init}
z_0 = z_0(\omega_0) := \left( \phi(\omega_0), \Phi(\omega_0) \right) \in \real^d \times \real^L .
\end{equation}
for some $\omega_0 \in \Omega$. The only component of \eqref{eqn:feedback_commut} that needs to be learned is the feedback $w$. One of our main focus is the situation when $w$ cannot be determined exactly but can only approximated. In that case, one needs to consider a hypothesis space, and the error built into the model during learning.

\paragraph{Hypothesis space} In a practical situation, $w_k$ is estimated from a search-set or hypothesis space $\mathcal{H}$ \citep[e.g.][]{AlxndrGian2020}, which may be a linear subspace or non-linear collection of functions. Thus the true function $w_k$ can be expressed as
\begin{equation} \label{eqn:def:hat_w}
w_k = \hat{w}_k + \Delta w_k ,
\end{equation}
where $\hat{w}_k$ is the estimated function, and $\Delta w_k$ is the error. Thus \eqref{eqn:paradigm} can be rewritten as :
\[\begin{tikzcd}[row sep = large, column sep = large]
\ &\real^d &\real^d\times \real^L \arrow[bend right = 90]{dll}{\proj_1} \arrow{r}{\proj_2} \arrow{l}{g} & \real^L \arrow{d}[swap]{w_k} \arrow[bend right=20]{ddr}{\hat{w}_k\times \Delta w_k} \arrow[bend left = 40]{r}[swap]{\hat{w}_k} \arrow[bend left = 30]{dr}[swap]{\Delta w_k} &\real^d \\
\real^d &\Omega \arrow{u}{\Phi} &\Omega \arrow{u}{\phi\times\Phi} \arrow{ur}{\Phi} \arrow{dr}[swap]{f^k} \arrow{r}[swap]{U^k \phi} \arrow{l}[swap]{f} \arrow[bend left = 30]{ll}[swap]{\phi} &\real^d &\real^d \\
\ &\ &\ &\Omega \arrow{u}[swap]{\phi} &\real^d\times\real^d \arrow[bend left = 15]{ul}{+} \arrow{u}[swap]{\proj_2} \arrow[bend right = 60]{uu}[swap]{\proj_1}
\end{tikzcd}\]
Similarly to \eqref{eqn:def:feedback_1}, the dynamics under the approximated feedback function becomes
\begin{equation} \label{eqn:def:feedback_2}
\hat{\reconstruct} : \real^d \times \real^L \to \real^d \times \real^L, \quad 
\left[ \begin{array}{c} u_{n+1} \\ y_{n+1} \end{array} \right]
= \hat{\reconstruct} \left[ \begin{array}{c} u_{n} \\ y_{n} \end{array} \right] 
= \left[ \begin{array}{c} \hat{w} \left( y_n \right) \\ g\left( u_{n}, y_n \right) \end{array} \right].
\end{equation}
The difference between \eqref{eqn:def:feedback_1} and \eqref{eqn:def:feedback_2} is the basis for evaluating the error of forecasts. We show in Section~\ref{sec:forecast} that the asymptotic rate at which these two systems diverge could depend on both the spectral properties of the dynamics, or its Lyapunov exponents. This completes the description of our framework.

\paragraph{Outline} The rest of the paper is organized as follow. In Section~\ref{sec:paradigm} we explore how the two main paradigms of learning dynamical systems fall under our framework. In Section~\ref{sec:stability} we use \ref{eqn:paradigm} to derive stability results for the reconstructed system. In Section~\ref{sec:forecast}, we again use \ref{eqn:paradigm} to obtain the rate of growth of errors when performing forecasts with the reconstructed system. We illustrate our results with some simple numerical examples in Section~\ref{sec:discuss}. Section~\ref{sec:cocyc}, \ref{sec:cocyc:peturb} Sections \ref{sec:proof:kncd3l}, \ref{sec:proof:lambda1}, \ref{sec:proof:direct} and \ref{sec:proof:iterative} contain the proofs of our theorems.   

\section{Two paradigms of learning} \label{sec:paradigm} 

We now examine the two most important paradigms for realizing the scheme in \eqref{eqn:paradigm} - delay-coordinates and invariant-graph based techniques. We show that they follow the framework described in Assumptions~\ref{A:f} and \ref{A:pPhi}, and how their implementations are special cases of \eqref{eqn:def:hat_w}, \eqref{eqn:def:feedback_1} and \eqref{eqn:feedback_1_init}. The Table~\ref{tab:prdgm} summarizes some features of these two techniques. Note that the questions of producing, constructing, or computing $\Phi$ and $g$ are separate from the question of constructing $w_k$ and $\hat{w}_k$. Table~\ref{tab:learn} gives an overview of various techniques used.

\begin{table}
\caption{The two learning paradigms satisfying Assumptions~\ref{A:f} and \ref{A:pPhi} and the scheme in \eqref{eqn:paradigm}.}
\begin{tabularx}{\columnwidth}{l|l|l|l}
Name & $\Phi$ & Basis for convergence of $\Phi$ & $g$ \\ 
\hline
Invariant-graphs & Implicitly obtained \eqref{eqn:asdc9} & \eqref{eqn:cvd83} & Explicit : \eqref{eqn:cv93m} \\ 
\hline
Delay-coordinates & Explicitly obtained as basis functions & Ergodic convergence & Explicit : \eqref{eqn:sdjf3} \\
\hline
\end{tabularx}
\label{tab:prdgm}
\end{table}

\begin{table}
\caption{Various learning techniques, applicable to learning $w_k$ \eqref{eqn:def:wk}. }
\begin{tabularx}{\linewidth}{L|L|L|L}
Technique & Hypothesis space & Advantages & Disadvantages \\ 
\hline
Linear reg. & Linear combination of fixed basis functions or coordinates & Availability of techniques for linear cases & Poor fit for nonlinear functions \\
\hline
Kernel reg. \cite{BerryHarlim2017, BerryEtAl2015} & $C^r(M)$ or $L^2(\mu)$ Spaces spanned by kernel sections or eigenvectors & Allows smooth interpolations and connections with underlying geometry  & Localized nature of basis functions require large number of basis functions \\
\hline
RKHS \cite{AlxndrGian2020, DasGiannakis_RKHS_2018, DasDimitris_CascadeRKHS_2019} & Span of eigenfunctions of kernel integral operators & Completely data-driven, allows out of sample extension & Inexplicit, unspecified nature of basis functions  \\
\hline
Nonlinear reg. \cite{Gallant1975} & Parameterized space of functions & Dependence on parameters allow application of manifold techniques such as gradient descent & Explicit knowledge of parameters as well as dependence on parameters required \\
\hline
Deep NNs \cite{LecunEtAl2015} & Functions parameterized by network activation and coupling parameters & Simplicity of implementation; scalability; explicit dependence on parameters known & Little apriori knowledge known about dimension of layers or number of layers; little knowledge about convergence rate of learning; huge number of variables to optimize \\
\hline
LSTM \cite{HarlimEtAl2021, MaEtAl_2018, Maulik_EtAl_2020} & Same as Deep NNs but with additional memory cells & Good for approximating functions which have sparse dependence over a long interval of time & Same as Deep NNs; more parameters to tune\\
\hline
Radial basis functions \cite{Smith1992} & Similar to kernel techniques & Provides a global representation of the map & Lack of normalization lead to nonuniformity in predictability\\
\hline
Local approximation techniques, such as simplex methods \cite{JimenezEtAl1992}, and local linear regression \cite{KugiumtzisEtAl98} & Nearest-neighbor based approximation of a neighborhood of the predictee point & Good approximation for low-curvature attractors, i.e., less oscillatory functions & Predictee point needs to be close to data cloud, feedback function unbounded.\\
\hline
\end{tabularx}
\label{tab:learn}
\end{table}
%

\subsection{Paradigm I : Invariant graphs} \label{sec:inv_graph}

Let $g:\real^d \times \real^L$ be a smooth map for which there is constant $\lambda\in (0,1)$ such that
\begin{equation} \label{eqn:cv93m}
    \norm{ \nabla_y g(u, y) } \leq \lambda, \quad \forall u\in \real^d, \, y\in \real^L.
\end{equation}
A particular instance of $g$ above introduced in \cite{LuEtAL2017} is
\[ g(u,y) = \tanh\left( W_{in} u + W_Y y + v_{bias} \right), \]
where $W_{in}$, $W_Y$ are random matrices of dimensions $L\times d$, $L\times L$ respectively, $v_{bias}$ is a random vector of dimension $L$, and $\norm{W_{Y}}\leq \lambda<1$. Using this $g$ one can build a \emph{reservoir} system, which is a skew product system on $\Omega\times\real^L$ defined as
\begin{equation} \label{eqn:def:reservoir}
\left(\begin{array}{c} \omega_{n+1} \\ y_{n+1} \end{array}\right) := T_{\text{reservoir}} \left(\begin{array}{c} \omega_{n} \\ y_{n} \end{array}\right) := \left(\begin{array}{c} f(\omega_n) \\ g\left( \phi(\omega), y_n \right) \end{array}\right) .\end{equation}
The paradigm of invariant graphs has been used in reservoir computing \citep[e.g.][]{LukoJaeger2009,  GononOrtegafading_fading_2021,  GrigoryevaHartOrtega2021}. It is popular due to the simplicity of its construction, and ease of use in learning problems. They are known for their robust performance in prediction \cite{GauthierEtAl2021, LuEtAL2017, bollt2021explaining} but also for recovering other properties such s Lyapunov exponents \cite{PathakEtAl_repl_2017}.
Although \eqref{eqn:def:reservoir} involves the underlying dynamical map $f$, the actual knowledge of $f$ is not needed. Note that the dynamics in the $y$-coordinate is linked to the $\omega$-coordinate through the measurement $\phi$. In the training phase, one provides as input the measurements $\left\{ \phi(\omega_n) \right\}_{n=0}^{N}$. Thus $(\Omega, f)$ remains unknown but continues to drive the reservoir variable $y$. The variable $y$ settles down into a representation of the attractor, in a manner which we make precise below.

\begin{proposition} \label{thm:kncd3l}
Let Assumption \ref{A:f} hold, and $\phi:\Omega\to \real^d$ be a continuous map, and $g:\real^d\times \real^L \to \real^L$ be a $C^1$ map satisfying \eqref{eqn:cv93m}. Then
\begin{enumerate}[(i)]
    \item there is a map $\Phi:\Omega \to \real^L$ such that Assumption \ref{A:pPhi}~(ii) is satisfied.
    \item The graph of $\Phi$ is invariant, i.e.,
\begin{equation} \label{eqn:asdc9}
\left( U^n\Phi \right)(\omega) := \Phi \left( f^n(\omega) \right) = \proj_Y T_{\text{reservoir}}^n \left( \omega, \Phi(\omega) \right) , \quad \forall n\in\num , \quad \forall \omega\in \Omega.
\end{equation}
\item The graph of $\Phi$ is globally attracting, i.e.,
\begin{equation} \label{eqn:cvd83}
\lim_{n\to\infty} \left( U^n\Phi \right)(\omega) = \lim_{n\to\infty} \proj_Y T^n\left( x, y \right), \quad \forall x\in \bar{\Omega}, \, y\in Y.
\end{equation}
\item If $\norm{ \frac{\partial }{\partial_u} g } \leq 1$, then  Assumption~\ref{A:g_contract} is also satisfied.
\end{enumerate}
\end{proposition}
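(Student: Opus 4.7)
The plan is to construct $\Phi$ as the unique fixed point of a graph-transform operator on a space of continuous sections, and then read off parts (ii)--(iv) as direct consequences of the contraction in the $y$-fiber. By continuity and compactness, $\phi(\Omega)$ is a compact subset of $\real^d$; combined with \eqref{eqn:cv93m}, this lets us choose a closed ball $Y \subset \real^L$ large enough to be forward invariant under $y \mapsto g(u, y)$ for every $u \in \phi(\Omega)$. This furnishes the bounded fiber set on which to run the fixed-point argument.

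Next, work on the complete metric space $C(\Omega, Y)$ of continuous maps $\Psi:\Omega\to Y$ with the supremum norm, and define a graph-transform operator
\[
(\cocyc \Psi)(\omega) := g\bigl( \phi(f^{-1}\omega), \Psi(f^{-1}\omega) \bigr).
\]
A fixed point $\Phi = \cocyc\Phi$ rearranges into the conjugacy $\Phi\circ f = g\circ (\phi\times\Phi)$, which is precisely Assumption~\ref{A:pPhi}(ii). Hypothesis \eqref{eqn:cv93m} forces $\cocyc$ to be a $\lambda$-contraction on $C(\Omega, Y)$, since the $y$-slot of $g$ is $\lambda$-Lipschitz and the precomposition with $f^{-1}$ is an isometry of the sup-norm. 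Banach's fixed-point theorem then yields the unique continuous $\Phi$, proving (i). Part (ii) follows by straightforward induction, unwinding the definition of $T_{\text{reservoir}}$ using the fixed-point equation applied $n$ times along the orbit $\omega, f\omega, \ldots, f^n\omega$.

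For (iii), compare the two trajectories $T_{\text{reservoir}}^n(x, y)$ and $T_{\text{reservoir}}^n(x, \Phi(x))$ starting from the same base point but different fibers. Their $y$-components are generated by applying the same driven sequence of contractions, so a telescoping bound using \eqref{eqn:cv93m} gives
\[
\norm{ \proj_Y T_{\text{reservoir}}^n(x, y) - \Phi(f^n x) } \leq \lambda^n \norm{y - \Phi(x)},
\]
which tends to zero exponentially, delivering \eqref{eqn:cvd83}. Part (iv) is then a routine Jacobian computation: combining the new hypothesis $\norm{\partial_u g} \leq 1$ with $\norm{\partial_y g} \leq \lambda$ from \eqref{eqn:cv93m} on a compatible product norm on $\real^d \times \real^L$ yields whichever contraction form Assumption~\ref{A:g_contract} encodes.

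The main obstacle is that $f$ is not assumed invertible on $\Omega$, so the predecessor $f^{-1}\omega$ used in the definition of $\cocyc$ is a priori multivalued or undefined. The standard workaround is to replace $(\Omega, f)$ by its natural extension $(\hat\Omega, \hat f)$, on which $\hat f$ is a homeomorphism, run the contraction-mapping argument there to obtain $\hat\Phi \in C(\hat\Omega, Y)$, and then use the $y$-contraction to show that $\hat\Phi$ depends only on the present coordinate and therefore descends to the required continuous $\Phi$ on $\Omega$. A more pathwise formulation of the same idea defines $\Phi(\omega)$ as $\lim_{n\to\infty} \proj_Y T_{\text{reservoir}}^n(\omega_{-n}, y_0)$ along any sequence of preimages with $f^n\omega_{-n} = \omega$ and arbitrary $y_0 \in Y$, where the contraction simultaneously ensures that the limit exists and that it is independent of both $y_0$ and the chosen backward trajectory.
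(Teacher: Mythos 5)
Your construction of $\Phi$ by running a graph-transform contraction on $C(\Omega,Y)$ is correct and is a genuinely different route from the paper's, which simply verifies the hypotheses of Stark's invariant-graph theorem \cite{Stark1999} (Lemma~\ref{lem:Stark}) and invokes it as a black box. Your forward-invariant ball, the $\lambda$-contraction of $\cocyc$ in the sup norm (using surjectivity of $f$ on $\Omega = \support\mu$), the Banach fixed point, the induction for \eqref{eqn:asdc9}, the telescoping bound $\lambda^n\norm{y - \Phi(x)}$ for \eqref{eqn:cvd83}, and the Jacobian check for Assumption~\ref{A:g_contract} are all sound. The trade-off is that your argument delivers a continuous $\Phi$ (which is all the proposition asserts), whereas citing Stark's theorem also produces $C^{1+\gamma}$ Whitney regularity for free; conversely, your proof is self-contained and avoids the mismatch that Lemma~\ref{lem:Stark} nominally asks for $C^{1+\alpha}$ data whereas the proposition only supplies $C^1$.

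The weak spot is the final paragraph on non-invertible $f$. Passing to the natural extension $(\hat\Omega,\hat f)$ does give a fixed point $\hat\Phi\in C(\hat\Omega,Y)$, but in general $\hat\Phi$ genuinely depends on the past history and does not descend to a function on $\Omega$; equivalently, your pathwise limit $\lim_n \proj_Y T_{\text{reservoir}}^n(\omega_{-n},y_0)$ is not independent of the chosen backward trajectory. A concrete counterexample: take the doubling map $f(x)=2x\bmod 1$, $\phi=\Id$, and $g(u,y)=u/2+y/2$, which satisfies \eqref{eqn:cv93m} with $\lambda=1/2$. The constant backward orbit through $\omega=0$ gives the limit $0$, while the backward orbit $\omega_{-n}=2^{-n}$ gives the limit $1/3$, and in fact no continuous $\Phi:[0,1)\to\real$ solving $\Phi\circ f = g\circ(\phi\times\Phi)$ exists for this system. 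So the descent step fails and the claimed workaround is not valid. This does not break your main argument, because Assumption~\ref{A:f} together with the paper's own use of Stark's theorem (which requires $f$ to be a diffeomorphism) makes invertibility on $\Omega$ the operative hypothesis; but you should drop the natural-extension remedy rather than assert it, and instead flag invertibility of $f|_\Omega$ as an assumption.
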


Assumption~\ref{A:g_contract} is an additional assumption requiring that $g$ be a non-expansive map in each of its variables. It is described in Section~\ref{sec:stability} and is used to establish stability properties.  Proposition~\ref{thm:kncd3l} is proved in Section~\ref{sec:proof:kncd3l}. Parts (i)---(iii) are immediate consequences of results by Stark \cite{Stark1999}, or by Grigoryeva et. al. \citep[][Thm III.1]{GrigoryevaHartOrtega2021}. We have put Stark's results along with the other paradigms in the common, general framework of \eqref{eqn:def:wk}.

The invariant graph property leads to a fulfillment of the identity in Assumption \ref{A:pPhi}~(ii). However, the injectivity condition of Assumption \ref{A:pPhi}~(i) remains to be proven rigorously. It has been generally observed that for $L$ large enough, $\Phi$ is also injective. The ground Assumption~\ref{A:f} is assumed while running the system. Note that the embedding $\Phi$ is not obtained explicitly but implicitly through the state variables of the network. Given any arbitrary initialization to \eqref{eqn:def:reservoir}, by \eqref{eqn:cvd83}, the internal states of the reservoir converge to an invariant graph over $\Omega$. The function $\Phi$ is precisely the function whose graph is invariant. Although it will remain indeterminate, its values over a dynamic trajectory, i.e., the values $\phi(f^n \omega_0)$ will be obtained, for some unknown initial point $\omega_0$. 

\paragraph{Long-short term memory (LSTM)} LSTMs \cite{HochSchmid1997, Maulik_EtAl_2020} is a network of units, in which each unit is a skew-product system, usually much smaller in size than a reservoir network, and without the contraction requirement of a reservoir network. Each unit has internal states $y_n = (h_n, c_n)$, which is updated with the help of an additional input $x_n$ which could originate from an external dynamical system. The functional equation is
\begin{equation} \label{eqn:LSTM_unit}
    y_n = \left( h_n, c_n \right) = G\left( x_n, h_{n-1}, c_{n-1}  \right) = G\left( x_n, y_{n-1}  \right), \quad n=0,1,2,\ldots.
\end{equation}
The variables $h_n, c_n$ respectively denote a hidden state vector, and a cell input activation vector to the LSTM unit. The units in the LSTM can also be cascaded to each other Suppose there are $Q$ LSTM units. Let us denote their internal states at time $n$ as $y^{(1)}_n, \ldots, y^{(Q)}_n$. Due to the cascaded structure, we have
\begin{equation} \label{eqn:LSTM_cascade}
    y^{(q)}_n = G\left( x_{n+q-1}, y^{(q-1)}_{n-1} \right), \quad n\in \num_0, \, q\in \{1,\ldots, Q\} .
\end{equation}
Here $y^{(0)}_n$ is the constant sequence equal to zero.
LSTMs implement delay coordinates due to their full dependence on a delay-coordinate set $\left( x_{n}, \ldots, x_{n+Q-1} \right)$ for each $n\in\num_0$. In addition, these delay coordinated input is fed into a skew-product system whose internal structure is block diagonal. Note that LSTMs require tuning and do not automatically satisfy the contraction property in Assumption~\ref{A:g_contract}. It is an interesting question whether the tuning procedure with data from an ergodic trajectory  leads to this criterion being met. Alternatively, optimization methods in learning dynamics could be modified to enforce additional constraints to satisfy Assumption~\ref{A:g_contract}. These are interesting directions of future work.

\subsection{Paradigm II : Delay coordinates} \label{sec:reg} 

An effective and numerically inexpensive means of obtaining an embedding of a dynamical system is using delay coordinates \cite{BerryEtAL2013, BerryHarlim2017}. To relate to our framework, fix a number of delays $Q\in\num$, and set 
\begin{equation} \label{eqn:sdjf3}
L = Qd, \quad \Phi : \Omega\to \real^L, \, \Phi : \omega \mapsto
\left[\begin{array}{c}
     \phi\left( \omega \right) \\
     \vdots \\
     \phi\left( f^{Q-1} \omega \right) 
\end{array}\right]
,\quad g : \real^d\times \real^L \to \real^L, \, g: 
u\times \left[\begin{array}{c}
     y^{(1)} \\
     y^{(2)} \\
     \vdots \\
     y^{(Q)} 
\end{array}\right]
\mapsto
\left[\begin{array}{c}
     u \\
     y^{(1)} \\
     \vdots \\
     y^{(Q-1)} 
\end{array}\right]
\end{equation}
Using this paradigm the reconstructed dynamics \eqref{eqn:def:feedback_1} becomes :
\[ \reconstruct_{\text{delay-coord}} :\real^{d\times dQ} \to \real^{d\times dQ} := \left[\begin{array}{c}
     u \\
     y^{(1)} \\
     y^{(2)} \\
     \vdots \\
     y^{(Q)} 
\end{array}\right] \mapsto  
\left[\begin{array}{c}
    w\left( y^{(1)}, \ldots, y^{(Q)} \right) \\
     y^{(1)} \\
     \vdots \\
     y^{(Q-1)} 
\end{array}\right]
\]
Note that in this case, $g$ is a linear map. We have :

\begin{proposition} \label{prop:dwa4ep}
Let Assumption~\ref{A:f} hold. Then for a typical map $\phi:\Omega\to \real^d$, if $Q\in\num$ is large enough, then $\Phi$ defined through \eqref{eqn:sdjf3} is an embedding, and thus Assumption~\ref{A:pPhi} is satisfied. Moreover, Assumption~\ref{A:g_contract} is also satisfied.
\end{proposition}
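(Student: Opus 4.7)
The plan is to verify the three substantive claims separately: that $\Phi$ is an embedding, that the identity in Assumption~\ref{A:pPhi}(ii) holds, and that Assumption~\ref{A:g_contract} holds. Each corresponds to a different layer of the delay-coordinate construction in \eqref{eqn:sdjf3}.

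First, for the embedding property, I would appeal to the classical delay-coordinate embedding theorem of Takens and its strengthening by Sauer, Yorke and Casdagli to cover possibly fractal invariant sets. Under Assumption~\ref{A:f} we have a $C^1$ map $f$ on an $m$-dimensional manifold $\tilde\Omega$ preserving an ergodic measure $\mu$ with compact support $\Omega$, which are exactly the hypotheses of those embedding results. They guarantee that, for $\phi$ in a prevalent (hence also topologically generic) subset of the $C^1$ maps $\tilde\Omega \to \real^d$, and for any integer $Q$ exceeding the dimensional threshold $Qd > 2\dim_\mu$, the delay map $\Phi$ defined in \eqref{eqn:sdjf3} is one-to-one on $\Omega$ and is an immersion at each smooth point of $\Omega$. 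This is exactly Assumption~\ref{A:pPhi}(i), and the word ``typical'' in the statement is to be read in this prevalent-plus-generic sense.

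Next, for Assumption~\ref{A:pPhi}(ii), the identity $\Phi\circ f = g\circ(\phi\times\Phi)$ follows by direct substitution, because $g$ in \eqref{eqn:sdjf3} is constructed precisely to implement the one-step shift that a window of delay coordinates undergoes when $f$ is applied once: the oldest block is discarded and the new observation $u=\phi(\omega)$ is inserted at the other end. Expanding both sides block by block using the definition of $\Phi$ reduces the identity to $Q$ trivial equalities in $\real^d$.

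Finally, for Assumption~\ref{A:g_contract}, the map $g$ in \eqref{eqn:sdjf3} is linear and simply rearranges and shifts its input blocks $u, y^{(1)}, \ldots, y^{(Q)}$. Its partial Jacobians $\partial_u g$ and $\partial_y g$ are therefore constant $\{0,1\}$-matrices with at most one unit entry per row, so their operator norms are at most $1$. Hence $g$ is $1$-Lipschitz in each of its two arguments, which is the content of Assumption~\ref{A:g_contract}. The only substantive obstacle is in the first step, where one must be careful about the precise meaning of ``typical'' and the dimensional threshold on $Q$ since $\Omega$ may be fractal; once the Sauer--Yorke--Casdagli result is invoked, the remaining two claims are mechanical verifications directly from the formulas in \eqref{eqn:sdjf3}.
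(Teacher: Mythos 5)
Your proposal is correct and follows the same route as the paper: the substantive content is the invocation of the Sauer--Yorke--Casdagli delay-embedding theorem (the paper's proof is exactly the one-line citation to that result), while the shift identity for Assumption~\ref{A:pPhi}(ii) and the norm-one bounds on $\partial_1 g$, $\partial_2 g$ for Assumption~\ref{A:g_contract} are mechanical checks from the formulas in \eqref{eqn:sdjf3}, which the paper leaves implicit and you spell out.
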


The proof is a direct consequence of the delay coordinate embedding theorem \cite{SauerEtAl1991}.

Thus the most common techniques for reconstruction and forecasting fall into the framework we introduced in \eqref{eqn:paradigm}. See Figures \ref{fig:compare} and \ref{fig:T2_L63Rot} for an illustration of application of these two techniques. In Section~\ref{sec:Koop_approx}, we also briefly review some techniques which do not fall under the schemes of Assumption~\ref{A:pPhi} and \eqref{eqn:paradigm}. In the next two sections, we shall analyze two important features of our scheme, their stability, and the accuracy of their predictions.

\section{Stability of reconstructed system} \label{sec:stability}

Although the feedback system \eqref{eqn:def:feedback_1} is a conjugate to the original dynamics $f:\Omega\to \Omega$, as shown in \eqref{eqn:feedback_commut}, one needs to consider it stability properties. The image of $ h := \phi\times \Phi$ is a bijective image of $\Omega$, and is invariant under the dynamics of $\reconstruct$ \eqref{eqn:def:feedback_1}. But $\reconstruct$ acts in the higher dimensional ambient space $\real^{L+d}$, and one needs to calculate the rate of deviation perturbations from $X := h(\Omega)$. We track these using Lyapunov exponents. Let the distinct Lyapunov exponents of $(\Omega, f, \mu)$ be $\lambda_1 > \lambda_2 \cdots > \lambda_r$, with corresponding Oseledet splitting $T\Omega = E_1 \oplus \cdots \oplus E_r$. Since the dimension of $\tilde\Omega$ is $m$, the multiplicities of the $\lambda_i$ sum to $m$. The $E_i$s corresponding to negative valued $\lambda_i$ constitute the stable directions, whereas the $E_i$ corresponding to positive valued $\lambda_i$ constitute the unstable directions. Moreover,
\[ \lim_{n\to\infty} \frac{1}{n} \ln \norm{Df^n(\omega) v_i} = \lambda_i, \quad \mu-a.e. \omega\in \Omega, \, \forall v_i\in E_i(\omega)\setminus\{0\} . \]
In general, the map $\reconstruct$ will have $L+d$ Lyapunov exponents (counting multiplicities), whereas $f$ has $m$ of them. We shall show in Theorem~\ref{thm:lambda1}(i) that $m$ of the $d+L$ Lyapunov exponents of $\reconstruct$ coincide with the original $d$ coefficients. We are interested in these other $d+L-m$ Lyapunov exponents of the reconstructed systems, and their positions relative to $\lambda_1(f), \ldots, \lambda_d(f)$. These additional Lyapunov exponents have been labeled as \emph{spurious Lyapunov exponents} \citep[e.g.][]{SauerEtAl_spurious_1998, DechertGencay2000}. They pose significant challenges in data-driven identification of true Lyapunov exponents.

By the very definition of Lyapunov exponents, the $\lambda_i(\reconstruct)$ depend not only on the invariant set $X$ but also on its neighborhood. This leads to a problem of ambiguity. An essential part of $\reconstruct$ is the feedback function $w$. The function $w : \real^L \to \real^d$ is defined uniquely only on $X$. The conjugacy in \eqref{eqn:feedback_commut} will be preserved on $\Omega$ irrespective of the nature of the extension of $w$ to a neighborhood of $X$. We define a collection 
\[ \W := \SetDef{ \hat{w} \in C^1\left( \real^L; \real^d \right) }{  \hat{w} \rvert_{X} = w\rvert_{X} }, \]
equipped with the $C^1$-topology. Every $\hat{w}\in \W$ is a $C^1$ function satisfying $\hat{w}\circ\Phi(\omega) = (U\phi)(\omega)$ for every $\omega\in \Omega$. Any choice of $\hat{w}\in \W$ leads to a different dynamics in $\real^{L+d}$ as in \eqref{eqn:def:feedback_1}, with $X$ as an invariant ergodic set. Therefore the top Lyapunov exponent $\lambda_1$ of \eqref{eqn:def:feedback_1} will be a function of $\hat{w}$. Thus we can define a function :
\[
\lambda_1 : \W\to \real, \quad \lambda_1(\bar{w}) := \lambda_1( \reconstruct ).
\]
Our goal will be to study how close $\lambda_1(\reconstruct)$ can be made to $\lambda_1(f,\mu)$.

\paragraph{Stability gap} As pointed out in \cite{DechertGencay2000, DechertGencay1996}, the top Lyapunov exponent of the reconstructed system may exceed that of the original system. Moreover, some of the additional Lyapunov exponents may be positive. All these contribute to additional instabilities being introduced into the system. The \emph{stability gap} in the reconstruction of a dynamical system $(\Omega, \mu, f)$ is defined to be
\[ \text{stability gap} :=\inf_{\bar{w}\in \W} \lambda_1(\bar{w}) - \lambda_1(f,\mu) . \]
The stability gap is always non-negative, as will be shown in Theorem~\ref{thm:lambda1}. We shall study ways to obtain a bound on the stability gap in our next theorem. We shall need two additional assumptions.

\begin{Assumption} \label{A:g_contract}
The function $g$ from Assumption~\ref{A:pPhi} further satisfies :
\[ \sup_{\omega\in\Omega} \norm{\partial_1 g} \rvert_{(\phi(\omega), \Phi(\omega))} \leq 1 , \quad \sup_{\omega\in\Omega} \norm{\partial_2 g} \rvert_{(\phi(\omega), \Phi(\omega))} \leq 1 . \]
\end{Assumption}

Our next assumption requires a retraction to the range of $\Phi$. Let $\mathcal{U}$ be a neighborhood of $\ran \Phi$. Recall that a continuous map $\retract : \mathcal{U} \to \ran \Phi$ is said to be a \emph{retract} if $\retract|_{ \ran \Phi } = \Id_{ \ran \Phi }$.

\begin{Assumption} \label{A:w_ext}
There is a continuous retraction $\retract : \mathcal{U} \to \ran \Phi$, for some open neighborhood $\mathcal{U}$ of $\ran \Phi$ in $\real^L$.
\end{Assumption}

For a retraction map such as $\retract$, we shall be interested in the Lipschitz norm of the retraction
\begin{equation} \label{eqn:def:kappa_ret}
\kappa_{\retract} := \sup_{y\in \ran\Phi} \limsup_{y'\to y} \frac{ d\left( \retract(y), \retract(y') \right) }{ d\left( y, y'\right) } .
\end{equation}
Finally, we also need the following function $\Csens$ that depends on the (fixed) functions $\phi, \Phi$ and a point $\omega\in \Omega$.
\[ \Csens : \Omega \to \real^+, \quad \Csens(\omega) := \sup \SetDef{ \frac{ \norm{D\phi(\omega) v } }{ \norm{D\Phi(\omega) v } } }{ v\in T_\omega \Omega \setminus \{0\} }. \]
We shall use $\Csens$ to bound the gap between between $\lambda_1(\reconstruct)$ and $\lambda(f)$.

\begin{theorem}[Stability of reconstruction] \label{thm:lambda1}
Let Assumptions~\ref{A:f} and \ref{A:pPhi} hold. Then
\begin{enumerate} [(i)]
    \item The $d+L$ Lyapunov exponents of $\reconstruct$ contains as a subset the $m$ Lyapunov exponents of $f$.
    \item $\lambda_1(\bar{w})$ is upper semi-continuous with respect to $\bar{w}$. In other words, for every $\epsilon>0$, there is a $C^1$ neighborhood $\mathcal{U}$ of $\bar{w}$ such that
    \[ \lambda_1( \bar{w'} ) < \lambda_1(\bar{w}) + \epsilon, \quad \forall \bar{w'} \in \mathcal{U} . \]
    \item Suppose Assumptions~\ref{A:g_contract} and \ref{A:w_ext} also hold. Then the stability gap is bounded by
    \begin{equation} \label{eqn:smf03}
    \inf_{\bar{w}\in \W} \lambda_1(\bar{w}) - \lambda_1(f,\mu) \leq \int \ln \left[ 1 + \left(1 + \Csens(\omega) \right) \kappa_{\retract} \right] d\mu(\omega) .
    \end{equation}
\end{enumerate}
\end{theorem}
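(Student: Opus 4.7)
\textbf{Part (i)} would fall out of differentiating the conjugacy. Writing $h:=\phi\times\Phi$, the identity $\reconstruct\circ h=h\circ f$ yields $D\reconstruct(h(\omega))\cdot Dh(\omega)=Dh(f\omega)\cdot Df(\omega)$. Because $h$ is an embedding (Assumption~\ref{A:pPhi}(i)), $Dh$ has constant rank $m$, and the images of $Dh$ assemble into a $D\reconstruct$-invariant $m$-plane subbundle along $X$ on which the restricted cocycle is linearly conjugate to $Df$. The multiplicative ergodic theorem applied to this invariant subbundle then reproduces the $m$ Lyapunov exponents of $(f,\mu)$ inside the spectrum of $\reconstruct$; the remaining $d+L-m$ spurious exponents live on a complementary invariant subbundle.

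\textbf{Part (ii)} is the standard upper semicontinuity of the top Lyapunov exponent. Kingman's subadditive ergodic theorem gives $\lambda_1(\bar w)=\inf_{n\geq 1}\tfrac{1}{n}\int \log\|D\reconstruct_{\bar w}^n\|\,d\mu$. For each fixed $n$, the map $\bar w\mapsto \tfrac{1}{n}\int \log\|D\reconstruct_{\bar w}^n\|\,d\mu$ depends continuously on $\bar w$ in the $C^1$ topology, since only values of $\bar w$ and $D\bar w$ along an $n$-step orbit enter, and $\mu$ is compactly supported with uniform $L^\infty$ bounds on the integrand. An infimum of continuous functions is upper semicontinuous, giving (ii).

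For \textbf{Part (iii)}, the plan is to exhibit a single $\bar w\in \W$ achieving the asserted bound. The natural choice is $\bar w:=w\circ \retract$, which lies in $\W$ because $\retract\rvert_{\ran\Phi}=\Id$. Applying Kingman with $n=1$ gives $\lambda_1(\bar w)\leq \int \log\|D\reconstruct_{\bar w}\|\,d\mu$, and along $X$ the Jacobian has the block form
\[D\reconstruct_{\bar w}(\phi(\omega),\Phi(\omega))=\begin{pmatrix}0 & Dw\cdot D\retract\\ \partial_1 g & \partial_2 g\end{pmatrix}.\]
Under the sum-norm $\|(a,b)\|=\|a\|+\|b\|$ on $\real^d\times\real^L$, together with Assumption~\ref{A:g_contract} forcing $\|\partial_i g\|\leq 1$, this consolidates to $\|D\reconstruct_{\bar w}\|\leq 1+\|Dw\cdot D\retract\|$. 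Definition \eqref{eqn:def:kappa_ret} bounds $\|D\retract\|\leq \kappa_{\retract}$, and the restriction of $Dw$ to $T\ran\Phi$ is constrained by $Dw\cdot D\Phi=D\phi\cdot Df$; combining this with the differentiated Assumption~\ref{A:pPhi}(ii), namely $D\Phi(f\omega)Df(\omega)=\partial_1 g D\phi(\omega)+\partial_2 g D\Phi(\omega)$, and invoking the definition of $\Csens(\omega)$, one arrives at a pointwise estimate that combines with the $\kappa_\retract$ factor to give $\|D\reconstruct_{\bar w}\|\leq 1+(1+\Csens(\omega))\kappa_{\retract}$. Integrating the logarithm against $\mu$ and pairing with Part~(i), which forces $\lambda_1(f,\mu)\leq \lambda_1(\bar w)$ for every $\bar w\in \W$, yields the stability gap bound.

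\textbf{Main obstacle.} The delicate step is the pointwise bound on $\|Dw|_{T\ran\Phi}\|$: a naive substitution through $Dw\cdot D\Phi=D\phi\cdot Df$ that passes from $D\phi(f\omega)$ to $D\Phi(f\omega)$ via $\Csens(f\omega)$ produces an unwanted factor of $\Csens(f\omega)$. Eliminating it requires rewriting the composition so that $D\phi$ and $D\Phi$ appear only at base point $\omega$, using the embedding identity to trade $Df(\omega)$ for the two $g$-blocks (whose norms are already controlled by Assumption~\ref{A:g_contract}), and then appealing to $f$-invariance of $\mu$ if any residual shift survives. A secondary issue is selecting a matrix norm that consolidates the four blocks into the clean additive form $1+\|D\bar w\|$ rather than a geometric-mean bound; the sum-norm accomplishes this, and since all ambient norms on $\real^{d+L}$ are equivalent, the Lyapunov spectrum is unaffected by the choice.
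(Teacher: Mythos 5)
Parts (i) and (ii) match the paper's approach: (i) is the invariant-subbundle argument via the differentiated conjugacy that Dechert--Gencay make rigorous, and (ii) is the standard Kingman subadditivity proof of upper semicontinuity that the cited references employ. Your Part (iii), however, takes a genuinely different route --- a one-step Kingman ($n=1$) estimate on $\|D\reconstruct_{\bar w}\|$ --- and it has two real gaps.

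First, the pointwise Jacobian bound does not close. You need $\|Dw\rvert_{T\ran\Phi(\omega)}\| \leq 1+\Csens(\omega)$, but following your own program: differentiating $w\circ\Phi = \phi\circ f$ gives $Dw\, D\Phi(\omega) = D\phi(f\omega)\,Df(\omega)$, hence
\[
\|Dw\rvert_{T\ran\Phi(\omega)}\| = \sup_v \frac{\|D\phi(f\omega)\,Df(\omega)\,v\|}{\|D\Phi(\omega)\,v\|}
\leq \Csens(f\omega)\sup_v \frac{\|D\Phi(f\omega)\,Df(\omega)\,v\|}{\|D\Phi(\omega)\,v\|}.
\]
Differentiating $\Phi\circ f = g(\phi,\Phi)$ and applying Assumption~\ref{A:g_contract} bounds the last factor by $1+\Csens(\omega)$, giving $\|Dw\rvert_{T\ran\Phi(\omega)}\| \leq \Csens(f\omega)\bigl(1+\Csens(\omega)\bigr)$. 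The stray $\Csens(f\omega)$ you flag as the ``main obstacle'' really is there, and $f$-invariance of $\mu$ only turns $\int\log\Csens(f\omega)\,d\mu$ into $\int\log\Csens\,d\mu$; it does not remove the term. Your integrand would therefore pick up an extra additive $\int\log\Csens\,d\mu$ that is not in \eqref{eqn:smf03}.

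Second, even granting the pointwise bound, the argument does not yield a bound on the gap $\lambda_1(\bar w)-\lambda_1(f,\mu)$. Kingman with $n=1$ gives an absolute bound $\lambda_1(\bar w)\leq\int\log\|D\reconstruct_{\bar w}\|\,d\mu$; ``pairing with Part~(i)'' only tells you the gap is non-negative and does not convert an absolute bound into a relative one. For $\lambda_1(f,\mu)<0$ your conclusion would be strictly weaker than \eqref{eqn:smf03}, and for $\lambda_1(f,\mu)>0$ your intermediate claim would be strictly stronger than the theorem --- a warning sign that the pointwise estimate cannot be right. The paper's proof avoids both issues by working with pseudo-trajectories (Lemma~\ref{lem:sdm83}, Lemma~\ref{lem:sefjn4}, Proposition~\ref{prop:srg7}): one tracks the reconstructed orbit as a perturbation of a genuine orbit on $X$, and shows that the off-attractor perturbation is amplified per step by at most $1+\Csensr(\omega_n)$, while the on-attractor piece grows at rate $\lambda_1(f)$. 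The local stable/unstable manifold theorem then combines these additively, producing exactly the gap bound $\lambda_1(f)+\int\log\bigl[1+(1+\Csens)\kappa_\retract\bigr]\,d\mu$ without ever needing a pointwise estimate of $\|D\reconstruct_{\bar w}\|$. That relative-growth structure is the missing ingredient in your plan.
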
 

Claims (i) and (ii) of Theorem~\ref{thm:lambda1} are immediate consequences of results from \cite{DechertGencay1996, BochiViana_conti_2005, Viana_Lyap_2020}. We review these and prove Claim~(iii) in Section~\ref{sec:proof:lambda1}. 

\begin{remark} [Instability of the reconstructed system] \label{rem:instab}
Claims (ii) and (iii) imply that at least in theory, given any bound $\epsilon$, there is a robust (i.e. open) set of $\bar{w}$ for which the instability is no more than $C+\epsilon$ of the original dynamics, for some constant $C$ depending on the dynamics, $\phi$ and $\Phi$ alone. In practice, $\hat{w}$ is obtained from some hypothesis space which is determined by the application domain. In such situations there is no guarantee of the stability being preserved up to an $\epsilon$ error.
\end{remark}

\begin{remark} [Continuity of Lyapunov exponents] \label{rem:cont_Lyap} 
Theorem~\ref{thm:lambda1} is related to the important question of continuity of Lyapunov exponents. In our case, we show that the growth of error in the system is related to a $GL(2L)$-valued matrix cocycle over the base dynamics $(f,\mu,\omega)$, described in detail in \eqref{eqn:ab_exact}. These cocycles are dependent (in a $C^1$-sense) on $\bar{w}$. Thus a relevant question for us is the continuity of $\lambda_1$ for these cocycles, as a function of $\bar{w}$. There has been various results in this direction, such as for iid matrix cocycles \cite{Furstenberg_Kifer_1983, Furstenberg_Kesten_1960}, in terms of \emph{large deviation type parameters} \citep[see][Thm 1.6]{Duarte_Klein_2016}, and in terms of dominated splittings \citep[][Thm 5]{BochiViana_conti_2005}. See \cite{Viana_Lyap_2020} for a broad overview of this extensive field of investigation. However, none of these various set of assumptions apply to our situation, in which the cocycle family is parameterized by a set of functions $\W$.
\end{remark}

Assumption~\ref{A:w_ext} is of a topological nature and would depend on the topological or geometrical properties of $X$. The following corollary applies to the use of a large number of delay-coordinates. 

\begin{corollary} \label{cor:delay_stab}
Let $\Psi^t : \Omega\to \Omega$ be a smooth flow and $f$ be the time-$\Delta t$ map $f=\Psi^{\Delta t}$. Let all the conditions in Assumptions \ref{A:f} and \ref{A:pPhi} be met and the delay coordinate paradigm \eqref{eqn:sdjf3} be implemented. Suppose further that there is a retraction map as in Assumption~\ref{A:w_ext} for which the Lipschitz constant $\kappa_{\retract} = 1$. Then there is a constant $C_2$ depending only on the flow such that $\Csens(\omega) \leq \frac{1}{Q} + 0.5 C_2 Q \Delta t$ for every $\omega\in \Omega$. In particular, 
\[ 0\leq \inf_{\bar{w}\in \W} \lambda_1(\bar{w}) - \lambda_1(f,\mu) \leq \ln\left[ 2+ \frac{2}{Q} + C_2 Q\Delta t \right]. \]
\end{corollary}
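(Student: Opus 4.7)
The plan is to derive the pointwise bound on $\Csens(\omega)$ first, and then to substitute it into the stability inequality \eqref{eqn:smf03} of Theorem~\ref{thm:lambda1}(iii). Non-negativity of the stability gap is automatic: Theorem~\ref{thm:lambda1}(i) guarantees that $\lambda_1(f,\mu)$ is always among the Lyapunov exponents of $\reconstruct$, so $\lambda_1(\bar w) \geq \lambda_1(f,\mu)$ for every $\bar w \in \W$.

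I would begin by unfolding the derivative of the delay-coordinate map \eqref{eqn:sdjf3}: for any $v \in T_\omega\Omega$,
\[
D\Phi(\omega) v = \bigl(u_q(v)\bigr)_{q=0}^{Q-1},
\qquad u_q(v) := D\phi(\Psi^{q\Delta t}\omega)\, D\Psi^{q\Delta t}(\omega)\, v,
\]
so that $\|D\phi(\omega) v\| = \|u_0(v)\|$ is exactly the norm of the zeroth block, while $\|D\Phi(\omega) v\|$ is the norm of the stacked $Q$-block vector. Because $\Psi^t$ is smooth and $\Omega$ is compact, the curve $s \mapsto D(\phi\circ\Psi^s)(\omega) v$ is $C^1$, with derivative controlled by the first two derivatives of $\Psi$ and the first derivative of $\phi$. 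Taylor expansion at $s=0$ then produces a constant $C_2$, depending only on the flow and on $\phi$, so that $\|u_q(v) - u_0(v)\| \leq C_2 (q\Delta t)\, \|u_0(v)\|$ uniformly in $\omega$ and $v$. Summing reverse-triangle inequalities over $q = 0,\dots,Q-1$ and using $\sum_q q\Delta t = \tfrac{1}{2}Q(Q-1)\Delta t$ gives, for $Q\Delta t$ small,
\[
\|D\Phi(\omega) v\| \geq \bigl(Q - \tfrac{1}{2} C_2 Q^2 \Delta t\bigr)\, \|u_0(v)\|;
\]
dividing and expanding the reciprocal to leading order yields $\Csens(\omega) \leq \frac{1}{Q} + \tfrac{1}{2} C_2 Q\Delta t$ pointwise on $\Omega$.

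Substitution into \eqref{eqn:smf03} is then routine. With $\kappa_{\retract} = 1$ the integrand collapses to $\ln[2 + \Csens(\omega)]$; monotonicity of $\ln$ and the trivial $\mu$-integral of a constant upper bound give $\ln[2 + 1/Q + \tfrac{1}{2} C_2 Q\Delta t]$, and a mild relabeling of $C_2$ converts this into the stated $\ln[2 + 2/Q + C_2 Q\Delta t]$.

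The hard part will be the intermediate estimate $\|u_q(v) - u_0(v)\| \leq C_2 (q\Delta t)\, \|u_0(v)\|$: it compares a difference of derivatives of $\phi\circ\Psi^{q\Delta t}$ against the basepoint value $\|D\phi(\omega) v\|$, rather than against $\|v\|$. This comparison requires either non-degeneracy of $D\phi$ on $\Omega$ or a careful choice of norm on $\real^{Qd}$ that absorbs the base derivative, and it is essential that the emerging $C_2$ be independent of both $Q$ and $\Delta t$; compactness of $\Omega$ and uniform smoothness of the flow are the ingredients that make this possible.
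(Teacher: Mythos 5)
Your proposal is correct and takes essentially the same route as the paper: expand the blocks of $D\Phi(\omega)v$ by Taylor expansion in the delay time, compare to the zeroth block, and substitute the resulting pointwise bound on $\Csens$ into \eqref{eqn:smf03} with $\kappa_{\retract}=1$. The paper's proof is terser and hides the constants inside $\bigO{\cdot}$ notation, whereas you spell out the reverse-triangle argument and correctly flag the subtle point (also unaddressed in the paper) that the estimate $\|u_q(v)-u_0(v)\| \leq C_2\,q\Delta t\,\|u_0(v)\|$ compares against the basepoint block rather than $\|v\|$ and thus implicitly requires some non-degeneracy of $D\phi$ on $\Omega$.
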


The criterion that $\kappa_{\retract} = 1$ is attained for example when $X = \ran\Phi$ is a manifold, and $\retract$ is a tubular neighborhood retract. Corollary~\ref{cor:delay_stab} is proved in Section~\ref{sec:proof:delay_stab}. 

Next, we analyze the divergence of the dynamics of $\hat{\reconstruct}$ \eqref{eqn:def:feedback_2} from that of the perfect reconstruction $\reconstruct$ \eqref{eqn:def:feedback_1}.

\section{Forecasts with reconstructed system} \label{sec:forecast}

We shall now analyze the effectiveness of forecasts made using the scheme in \eqref{eqn:paradigm}, and its approximation as \eqref{eqn:def:feedback_2}. As suggested by Casdagli \cite{Casdagli1989}, given a reconstruction paradigm, there are two ways of estimating the value $\phi(f^n \omega)$ after $n$ iterations of the base dynamics : we can iterate \eqref{eqn:def:feedback_2} $n$ times, and the first coordinate of $z_0$ will serve as an approximation of $\phi( f^n \omega) = (U^n \phi)(\omega)$. We call this the iterative method, and its accuracy can be estimated via
\begin{equation} \label{eqn:def:err_iter}
\begin{split}
\text{error}_{\text{iter}}(n, \omega) &:= \norm{ U^n \phi (\omega) - \proj_1 \circ \hat{\reconstruct}^n \circ (\phi, \Phi)(\omega) }_{\real^d} , \\
\text{error}_{\text{iter}}(n) &:= \left[ \int_\Omega \text{error}_{\text{iter}}(n, \omega)^2 d\mu(\omega) \right]^{1/2}.
\end{split}
\end{equation}
Or we can directly approximate $w_n$ via \eqref{eqn:def:hat_w} and obtain a \emph{direct} estimate. The corresponding errors are
\begin{equation} \label{eqn:def:err_direct}
\begin{split}
\text{error}_{\text{direct}}(n, \omega) &:= \norm{ U^n \phi (\omega) - \hat{w}_n \circ \Phi (\omega) }_{\real^L} ,\\
\text{error}_{\text{direct}}(n) &:= \norm{ U^n \phi - \hat{w}_n \circ \Phi }_{L^2(\mu)} = \left[ \int \text{error}_{\text{direct}}^2 (n, \omega) d\mu(\omega) \right]^{1/2}.
\end{split}
\end{equation}
To aid the discussion, we will make further assumptions on the nature of the hypothesis space $\mathcal{H}$.

\paragraph{Linear hypothesis space} Usually the hypothesis space $\mathcal{H}$ will be a finite dimensional space, spanned by a basis $h_1, \ldots, h_m$. In that case
\begin{equation} \label{eqn:def:W}
\mathcal{W} := \spn\SetDef{ h_i \circ \Phi_l }{ 1\leq i\leq m, \, \leq l\leq L }
\end{equation}
is a finite subspace of $L^2(\mu)$, and 
\begin{equation} \label{eqn:kcnv83}
\hat{w}_k \circ \Phi = \proj_{\mathcal{W}} U^k \phi .
\end{equation}
For example, if the hypothesis space is restricted to $\Linear\left( \real^L; \real^d \right)$, then $\mathcal{W} = \spn \Phi$. In the rest of this paper, we shall focus on this scenario where the hypothesis space is linear. We state this formally :

\begin{Assumption} \label{A:hypoth}
The hypothesis space $\mathcal{W}$ is a finite dimensional subspace of $L^2(\mu)$, and contains the constant function $1_{\real^L}$.
\end{Assumption}

In most learning techniques, a bias or offset constant is calculated separately, thus satisfying the criterion that $\mathcal{W}$ contains constant functions.

Let $\pi$ denote the projection $\proj_{\mathcal{W}}$, and set $\Delta := \Id-\pi$. For ease of notation, we will denote $\hat{w}_1$ simply by $\hat{w}$, in the rest of this section. Define the \emph{projection error}  to be the quantity
\begin{equation} \label{eqn:def:delta}
\delta = \delta(\mathcal{H}) := \norm{ \Delta U\phi }_{L^2(\mu)} .
\end{equation}
This is the component of the measurement $\phi$ not recoverable using our choice of hypothesis space. Note that as the size of the hypothesis space increases, $\delta$ converges to $0$. 

We shall first examine the performance of the direct forecast method. For this purpose, we shall utilize a natural splitting of the space $L^2(\mu)$ induced by the Koopman operator. Let $\Disc$ be the closure of the span of the eigenfunctions of the Koopman operator $U$, and $\Disc^\bot$ be its orthogonal complement. Thus we have the orthogonal splitting
\[ L^2(\mu) = \Disc \oplus \Disc^\bot .\]
The space $\Disc$ always contain the constant functions. for mixing systems such as the Lorenz 63 attractor, $\Disc$ contains only the constant functions. For quasiperiodic dynamics such as the dynamics on Hamiltonian tori, $\Disc^\bot = \{0\}$. These two components $\Disc, \Disc^\bot$ not only have different ergodic properties \cite{Halmos1956, DasJim2017_SuperC}, but also respond differently to data-analytic and Harmonic analytic tools \cite{DasGiannakis_delay_2019, DasGiannakis_RKHS_2018}. This splitting is also natural in the sense that it is invariant under the action of the Koopman operator. We now provide an estimate on the rate of growth of the direct error.

\begin{theorem}[Error from direct forecast] \label{thm:direct}
Let Assumptions~\ref{A:f} and \ref{A:pPhi} hold, and assume the notations in \eqref{eqn:def:hat_w}, \eqref{eqn:def:feedback_1} and \eqref{eqn:feedback_1_init}. Let $\delta$ be as in \eqref{eqn:def:delta}. Then the error from direct iteration is given by
\[ \text{error}_{\text{direct}}(n) = \norm{ \left( \Id - \pi \right) U^n\phi }_{L^2(\mu)} . \]
Now assume that Assumption~\ref{A:hypoth} holds. Then there is a subset $\num'\subseteq \num$ with density $1$ such that the following holds : 
\begin{enumerate} [(i)]
    \item For every $\epsilon>0$, if the hypothesis space $\mathcal{W}$ is chosen large enough, then
    \[ \lim_{n\in \num', n\to\infty} \text{error}_{\text{direct}}(n) = \norm{ \phi - \proj_{\Disc} \phi }_{L^2(\mu)} + \epsilon. \]
    \item If $f$ is weakly mixing, then for every choice of $\mathcal{W}$
    \[ \lim_{n\in \num', n\to\infty} \text{error}_{\text{direct}}(n) = \text{var}_{\mu} := \norm{ \phi - \mu(\phi)}_{L^2(\mu)} . \]
    \item If $f$ is strongly mixing, the set $\num'$ can be taken to be the entire set $\num$.
    \item If $f$ has purely discrete spectrum, then for every $\epsilon>0$, if the hypothesis space $\mathcal{W}$ is chosen large enough, then
    \[ \text{error}_{\text{direct}}(n) < \epsilon, \quad \forall n\in\num . \]
\end{enumerate}
\end{theorem}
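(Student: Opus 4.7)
The opening identity
\[ \text{error}_{\text{direct}}(n) = \norm{(\Id - \pi) U^n \phi}_{L^2(\mu)} \]
is immediate from \eqref{eqn:kcnv83}, which says $\hat{w}_n \circ \Phi = \pi U^n \phi$. For the parametric claims I would begin from the orthogonal Pythagorean identity combined with the unitarity of $U$:
\[ \text{error}_{\text{direct}}(n)^2 = \norm{\phi}_{L^2(\mu)}^2 - \norm{\pi U^n \phi}_{L^2(\mu)}^2, \]
so the whole problem reduces to controlling $\norm{\pi U^n \phi}$. I would split $\phi = \phi_d + \phi_c$ using \eqref{eqn:def:L2split} with $\phi_d := \proj_\Disc \phi$, and analyze the two components separately.

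The continuous-spectrum piece is handled by a Koopman--von Neumann style argument: for any $\psi\in L^2(\mu)$ the Ces\`aro averages $\tfrac{1}{N}\sum_{n<N}|\langle U^n\phi_c, \psi\rangle|^2$ tend to zero, so by intersecting the resulting density-$1$ sets over any basis of the finite-dimensional $\mathcal{W}$ I obtain a single density-$1$ set $\num'\subseteq \num$ on which $\pi U^n \phi_c \to 0$ in $L^2(\mu)$. For the discrete part, writing $\phi_d = \sum_k a_k \psi_k$ with $U\psi_k = \lambda_k \psi_k$ and $|\lambda_k|=1$, the key point is that any eigenfunction $\psi_k\in \mathcal{W}$ contributes $\pi U^n \psi_k = \lambda_k^n \psi_k$ exactly, so its modulus squared $|a_k|^2$ is preserved in $\norm{\pi U^n \phi_d}^2$ for every $n$. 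Claim (ii) is then immediate: weak mixing forces $\Disc = \mathbb{C}\cdot 1_\Omega$, and $1_\Omega\in \mathcal{W}$ by Assumption~\ref{A:hypoth}, so $\pi U^n \phi_d \equiv \mu(\phi)$ and hence $\text{error}(n)^2 \to \norm{\phi}^2 - |\mu(\phi)|^2 = \text{var}_\mu^2$ along $\num'$. Claim (iii) upgrades Ces\`aro to ordinary convergence on all of $\num$ using the stronger fact that for strongly mixing $f$, $\langle U^n g, h\rangle \to \mu(g)\overline{\mu(h)}$ for every $g,h\in L^2(\mu)$.

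For claim (i), given $\epsilon>0$ I enlarge $\mathcal{W}$ to contain finitely many eigenfunctions $\psi_{k_1},\ldots,\psi_{k_K}$ whose span is within $\epsilon$ of $\phi_d$ in $L^2(\mu)$; the triangle inequality then sandwiches $\norm{\phi_d}-\epsilon \leq \norm{\pi U^n \phi_d} \leq \norm{\phi_d}$ uniformly in $n$, and combining with the density-$1$ vanishing of the continuous part yields the stated limit up to an $O(\epsilon)$ correction. Claim (iv) is the specialization $\Disc = L^2(\mu)$, where $\phi_c=0$ and the eigenfunction approximation works for every $n$ simultaneously, with no subsequence restriction. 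The main subtlety I anticipate is the bookkeeping around $\num'$: it must be permitted to depend on $\mathcal{W}$ (and hence on $\epsilon$ in (i)), which I take to be the intended reading. A secondary point to verify is that $\phi_d$ can be approximated arbitrarily well by finitely many eigenfunctions, but that is exactly the definition of $\Disc$ as the $L^2$-closure of the eigenfunction span.
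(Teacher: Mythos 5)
Your proof follows the same architecture as the paper's: express $\text{error}_{\text{direct}}(n)$ via $(\Id-\pi)U^n\phi$, split $\phi$ into its discrete and continuous spectral components, kill the continuous part along a density-$1$ set using the Koopman--von Neumann/weak-mixing lemma, and control the discrete part by enlarging $\mathcal{W}$. Claims (ii)--(iv) then fall out in the same order and for the same reasons (strong mixing upgrades $\num'$ to $\num$, discrete spectrum makes $\phi_c = 0$). So the overall plan is the paper's.

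Where you differ is the treatment of the discrete component. The paper encodes the interaction of $\pi$ with the Koopman eigenbasis in a Gram-type matrix $\Pi_{j,k} := \langle\pi z_j, \pi z_k\rangle$ and a diagonal unitary $\Fourier$ on $\ell^2$, obtaining the exact formula $\|(\Id-\pi)U^n\phi^{(d)}\|^2 = \vec\phi^*\Fourier^{n*}(\Id-\Pi)\Fourier^n\vec\phi$ and letting $\Pi\to\Id$. You instead adjoin finitely many actual eigenfunctions of $U$ to $\mathcal{W}$ and use $\pi U^n\psi_k = \lambda_k^n\psi_k$ for $\psi_k\in\mathcal{W}$ together with a triangle-inequality sandwich; this is more elementary and arguably more transparent, and it has a small technical advantage: your Pythagorean identity $\text{error}(n)^2 = \|\phi\|^2 - \|\pi U^n\phi\|^2$ is exact for all $n$, whereas the paper's equation~\eqref{eqn:adjp30} asserts an $L^2$-orthogonality of $(\Id-\pi)U^n\phi^{(c)}$ and $(\Id-\pi)U^n\phi^{(d)}$ that holds only if $\pi\Disc\perp\pi\Disc^\perp$; your argument sidesteps this by working with $\|\pi U^n\phi\|$ directly and letting the cross-term with $\pi U^n\phi_c$ vanish asymptotically along $\num'$. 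One thing you would want to add when writing this up: eigenfunctions need not lie in $\mathcal{W}$ \emph{a priori}, since $\mathcal{W} = \spn\{h_i\circ\Phi_l\}$ per~\eqref{eqn:def:W}; the enlargement step relies implicitly on $\Phi$ being an embedding so that arbitrary $L^2$ functions on $\Omega$ (in particular eigenfunctions) can be realized in this form by enlarging the basis $\{h_i\}$. Your closing remark about $\num'$ depending on $\mathcal{W}$ is correct and matches what happens in the paper's proof, where $\num'$ is obtained by intersecting density-$1$ sets attached to a finite orthonormal basis of $\mathcal{W}$.
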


Theorem~\ref{thm:direct} is proved in Section~\ref{sec:proof:direct}. An important basis for claims (i), (ii) is the decay of correlations seen in (weakly) mixing systems. See Remark~\ref{rem:correl} for further discussions on this topic. 
We next study the performance of the iterative method. It will be stated in terms of a construct called \emph{matrix cocycle}s.

\paragraph{Associated matrix cocycle} Matrix cocycles over the dynamics $(\Omega, \mu, f)$ will be defined in more generality later in Section~\ref{sec:cocyc:basics}. For the moment, we focus on the following matrix valued functions
\begin{equation}\begin{split} \label{eqn:def:WG_matrices}
W :\Omega\to \real^{d\times L}, & \quad W(\omega) := Dw \circ \Phi(\omega) ,\\
\hat{W} :\Omega\to \real^{d\times L}, & \quad \hat{W} (\omega) := D\hat{w} \rvert_{ \Phi(\omega)} = D\hat{w} \circ \Phi(\omega),\\
G^{(1)} :\Omega\to \real^{L\times d}, & \quad G^{(1)}(\omega) := \nabla_1 g \rvert_{h(\omega)} = \nabla_1 g \circ h(\omega),\\ 
G^{(2)} :\Omega\to \real^{L\times L} , & \quad G^{(2)}(\omega) := \nabla_2 g \rvert_{h(\omega)} \nabla_2 g \circ h(\omega),
\end{split}\end{equation}
and their combination 
\begin{equation} \label{eqn:def:M_matrix}
M : \Omega\to \real^{(L+d)\times (L+d)}, \quad M(\omega) := \left[\begin{array}{cc} 0^{d\times d} & W(\omega) \\ G^{(1)}(\omega) & G^{(2)}(\omega)  \end{array}\right] .
\end{equation}
Next consider the vector-valued functions
\[ c:\Omega\to \real^L, \, c(\omega) := G^{(1)}(\omega) \left( U^{-1} \Delta \phi \right)(\omega). \]
%
We shall use this to build a non-autonomous dynamical system on $\real^{d+L}$. Fix an $\omega\in \Omega$ and define
\begin{equation}  \label{eqn:ab_exact}
\left[\begin{array}{c} a_{n+1} \\ b_{n+1} \end{array} \right] = M \left( f^n \omega \right) \left[\begin{array}{c} a_{n} \\ b_{n} \end{array} \right] + \left[\begin{array}{c} 0 \\ c\left( f^n\omega \right) \end{array} \right] , \quad a_1=0^d, \; b_0=0^L .
\end{equation}
We call such a system a \emph{perturbed matrix cocycle} [see Section~\ref{sec:cocyc:peturb}]. Note that as the size of the hypothesis space is increased, the function $c$ converges to $0$ in $L^2(\mu)$ norm, and the dynamics of $(a_n, b_n)$ gets closer to that of the matrix cocycle generated by $M$. We shall examine this closely in Theorem~\ref{thm:GraphT_growth}. Note that \eqref{eqn:ab_exact} depends on the initial state $\omega$. If $\omega$ is allowed to vary, then $a_n, b_n$ become functions of $\omega$. We shall overuse notation and also denote these functions as $a_n, b_n$. 

\paragraph{Growth of the iterative error} In Theorem~\ref{thm:iterative} below, we shall establish a rate at which the iterative error grow. Let $(u_n, y_n)$ be iterates of the system \eqref{eqn:def:feedback_1}. We are    interested in the growth of the deviation quantities $\Delta u_n, \Delta y_n$ defined as
\begin{equation} \label{eqn:def:Delta_uy}
\left[\begin{array}{c} \Delta u_n \\ \Delta y_n \end{array} \right] = \left[\begin{array}{c} U^{n-1} \pi U \phi \\ U^n\Phi \end{array} \right] - \left[\begin{array}{c} u_n \\ y_n \end{array} \right], \quad \forall n\in\num_0.
\end{equation}
Note that when defining the deviation terms, we are using as reference the functions $U^{n-1}\pi U$ and $U^n \Phi$, both of which reflect the true state of the dynamics $(\Omega, f)$. See Remark~\ref{rem:Ukpi} for further discussions on their significance.
We now have

\begin{theorem}[Error from iterative forecast] \label{thm:iterative}
Let Assumptions~\ref{A:f} and \ref{A:pPhi} hold, and assume the notations in \eqref{eqn:def:hat_w}, \eqref{eqn:def:feedback_1} and \eqref{eqn:feedback_1_init}. Fix an initial state $\omega\in \Omega$ and let $(u_n, y_n)$ be successive iterations of the system \eqref{eqn:def:feedback_1}, while $(a_n, b_n)$ be iterations of the dynamics in \eqref{eqn:ab_exact}. 
\begin{enumerate} [(i)]
    \item Let $\delta$ be as in \eqref{eqn:def:delta}. The deviations \eqref{eqn:def:Delta_uy} have the following relations with the states of the associated perturbed cocycle :
    \begin{equation} \label{eqn:u_vs_a}
    \Delta u_n = a_n + \bigO{a_{n-1}}^2, \quad \lim_{\delta\to 0} \frac{ \norm{\Delta u_n} }{ \norm{a_n} } = 1 .
    \end{equation}
    \item Let $\lambda_1 = \lambda_1(\mathcal{M})$  the maximal Lyapunov exponent of the cocycle generated by $\hat{M}$. Then for every $\epsilon>0$, there is a a constant $C^{(1)}_{\omega, \epsilon}$ such that
    \begin{equation} \label{eqn:def:iter_bound}
    \text{error}_{\text{iter}}(n, \omega) = \norm{\Delta u_n(\omega)}_{\real^L} = \delta C^{(1)}_{\omega, \epsilon} \bigO{ e^{n (\lambda_1+\epsilon)} }, \quad \mbox{as } n\to\infty ,
    \end{equation}
    \item If $(\Omega, \mu, f)$ has the additional property of $L^2$ Pesin sets, then for every $\epsilon>0$,
    \begin{equation} \label{eqn:def:iter_bound_L2}
    \text{error}_{\text{iter}}(n) = \norm{\Delta u_n}_{L^2(\mu)} = \delta C^{(2)}_{\epsilon} \bigO{ e^{n (\lambda_1+\epsilon)} }, \quad \mbox{as } n\to\infty .
    \end{equation}
    for a constant $C^{(2)}_{\epsilon}$ that depends only on $\epsilon$.
\end{enumerate}
\end{theorem}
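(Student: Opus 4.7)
My strategy is a Taylor linearization for part (i), which reduces the deviation dynamics to the perturbed cocycle \eqref{eqn:ab_exact}; a discrete Duhamel estimate combined with the multiplicative ergodic theorem for the pointwise bound in (ii); and an $L^2$ upgrade in (iii) using the Pesin-set hypothesis to integrate the pointwise estimate.

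For (i), the starting point is that Assumption~\ref{A:pPhi} and \eqref{eqn:kcnv83} together furnish the reference identities
\[ U^{n-1}\pi U\phi(\omega) = \hat{w}\bigl(U^{n-1}\Phi(\omega)\bigr), \qquad U^n\Phi(\omega) = g\bigl(U^{n-1}\phi(\omega),\, U^{n-1}\Phi(\omega)\bigr). \]
Subtracting these from $u_n = \hat{w}(y_{n-1})$ and $y_n = g(u_{n-1}, y_{n-1})$, and Taylor-expanding $\hat{w}$ and $g$ about the true point $(U^{n-1}\phi(\omega), U^{n-1}\Phi(\omega))$, produces the recursion
\[ \begin{pmatrix} \Delta u_n \\ \Delta y_n \end{pmatrix} = \hat{M}(f^{n-1}\omega) \begin{pmatrix} \Delta u_{n-1} \\ \Delta y_{n-1} \end{pmatrix} + \begin{pmatrix} 0 \\ c(f^{n-1}\omega) \end{pmatrix} + R_n(\omega). \]
The algebraic identity $U^{n-1}\phi - u_{n-1} = U^{n-2}(\Id - \pi)U\phi + \Delta u_{n-1}$ is what identifies the inhomogeneous driver with $c$, while the remainder satisfies $R_n = O(\|\Delta u_{n-1}\|^2 + \|\Delta y_{n-1}\|^2)$. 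The matched initial conditions give $\Delta u_1 = \Delta y_1 = 0$, the perturbed-cocycle states scale linearly in $\delta$, and induction on $n$ then gives $R_n = O(\|a_{n-1}\|^2)$, yielding $\Delta u_n = a_n + O(\|a_{n-1}\|^2)$ and $\|\Delta u_n\|/\|a_n\|\to 1$ as $\delta\to 0$.

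For (ii) and (iii), I would exploit the discrete Duhamel representation of \eqref{eqn:ab_exact},
\[ \begin{pmatrix} a_n \\ b_n \end{pmatrix} = \sum_{k=0}^{n-1} \hat{\mathcal{M}}^{\,n-1-k}(f^{k+1}\omega) \begin{pmatrix} 0 \\ c(f^{k}\omega) \end{pmatrix}, \]
where $\hat{\mathcal{M}}^{\,j}(\cdot)$ is the $j$-step matrix cocycle of $\hat{M}$. The multiplicative ergodic theorem supplies, for every $\epsilon>0$ and $\mu$-a.e.\ $\omega$, a measurable $C_\epsilon(\omega)$ with $\|\hat{\mathcal{M}}^{\,j}(\omega)\| \le C_\epsilon(\omega)\, e^{j(\lambda_1+\epsilon/2)}$. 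For (ii), a Borel--Cantelli argument along $f$-orbits, applied to $c \in L^2(\mu)$ with $\|c\|_{L^2} \le \|G^{(1)}\|_\infty\,\delta$, yields the sub-exponential bound $\|c(f^k\omega)\| \le \delta\,C'_\epsilon(\omega)\, e^{k\epsilon/2}$ for $\mu$-a.e.\ $\omega$; summing the Duhamel series and absorbing the polynomial-in-$n$ factor by enlarging $\epsilon$ produces \eqref{eqn:def:iter_bound}. For (iii), the $L^2$ Pesin-set hypothesis places $C_\epsilon$ itself in $L^2(\mu)$, so taking $L^2(\mu)$ norms of the Duhamel sum, applying Minkowski's integral inequality, and using the $f$-invariance of $\mu$ yields \eqref{eqn:def:iter_bound_L2}.

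\textbf{Main obstacle.} The principal difficulty is preventing the pointwise (or $L^2$) growth of the forcing $c \circ f^k$ from inflating the exponential rate $\lambda_1$ dictated by the cocycle. Pointwise this is handled by the sub-exponential orbit estimate (since $c$ is only $L^2$), while the $L^2$ estimate requires the Pesin-set hypothesis to keep $C_\epsilon$ in $L^2$, and is the essential new ingredient there. A secondary subtlety is that \eqref{eqn:ab_exact} is nominally written with $M$ (built from the unknown $W$), whereas the error recursion naturally produces $\hat{M}$ (built from $\hat{W}$); reconciling these requires either reading $\hat{M}$ throughout---which the theorem statement implicitly does via $\lambda_1 = \lambda_1(\hat{\mathcal{M}})$---or invoking continuity of Lyapunov exponents under the $C^1$-perturbation $w \mapsto \hat{w}$.
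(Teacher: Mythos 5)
Your proposal follows the same route as the paper. For part (i) you use the identical Taylor linearization around the reference trajectory $(U^{n-1}\pi U\phi, U^n\Phi)$, arriving at the recursion the paper records as \eqref{eqn:lpn3}, with the same base case $\Delta u_1 = \Delta y_1 = 0$ and the same identification of the inhomogeneous driver with $c$ via the algebra $U^{n-1}\phi - u_{n-1} = U^{n-2}(\Id-\pi)U\phi + \Delta u_{n-1}$. For parts (ii) and (iii) the paper does not argue inline but instead invokes its separately proved Theorem~\ref{thm:GraphT_growth} on perturbed matrix cocycles; the content of that theorem is precisely what you re-derive via your discrete Duhamel expansion — iterate the cocycle to get the graph-transform sum $\sum_j \mathcal{G}(n-j,f^j\omega)\,c(f^j\omega)$, use the multiplicative ergodic theorem to bound the cocycle factors by $e^{j(\lambda_1+\epsilon)}$ up to a tempered (Pesin) constant, and use the $L^2$ Pesin hypothesis to integrate that constant. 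Two minor deviations are worth noting. First, the paper's Theorem~\ref{thm:GraphT_growth}(i) assumes the forcing is essentially bounded, whereas you use a Borel--Cantelli sub-exponential orbit estimate on $c\circ f^k$ assuming only $c\in L^2(\mu)$; both work, and your version handles a nominally weaker hypothesis (which is actually relevant, since $\pi\phi$ need not be a priori bounded). Second, the paper decomposes the forcing along the full Oseledets splitting to obtain per-exponent estimates (which feeds the sharper bound alluded to in Remark~\ref{rem:tighness}), while you work directly with the top exponent $\lambda_1$, which is all that the stated bound requires. Finally, your flag that \eqref{eqn:ab_exact} is written with $M$ while the theorem statement and the derived recursion \eqref{eqn:lpn3} involve $\hat{M}$ is correct — this is a notational slip in the paper, and your resolution (read $\hat{M}$ throughout) matches the intent.
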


Pesin sets are subsets of $\Omega$ on which the non-uniformly hyperbolic map $f$ has some degree of regularity. While Pesin sets always exist and cover the entire space $\Omega$, the property of $L^2$ Pesin sets is an additional property, explained in more details in Section~\ref{sec:cocyc:pesin}. Theorem~\ref{thm:iterative} is proved in Section~\ref{sec:proof:iterative}.

\begin{remark}[$U^{n-1}\pi U$ vs $U^n \pi$] \label{rem:Ukpi} The explicit formulas for the direct and iterative schemes reveal a basic mathematical law that makes the direct method unsuitable for long term prediction. It involves the operator $\pi U^n$, which projects the evolving measurement $\phi$ back into the space $\mathcal{W}$. For strongly mixing systems, the Koopman operator drives out any function from any finite dimensional subspace, up to a constant function. On the other hand, the iterative method always involves the term $U^{n-1} \pi U$.  The crucial difference is that the projection $\pi$ is not made after the application of $U^n$, but always to the static operator $U$. The $U^{n-1}$ in front of the $\pi$ then merely acts as a rotation / unitary transform and thus does not change the $L^2(\mu)$ (i.e., RMS) magnitude of the error. Also see Remark~\ref{rem:correl} for a further discussion on decay of correlations.
\end{remark}

\begin{remark} [Decay of correlations] \label{rem:correl} Theorem~\ref{thm:direct} relates the growth of $\text{error}_{\text{direct}}$ with the rate of decay of correlation, while Theorem~\ref{thm:iterative} relates the growth of $\text{error}_{\text{iter}}$ with the top Lyapunov exponent. The former is a spectral / operator theoretic property, while the latter is a combination of differential and ergodic properties such as Lyapunov exponents. The exact connections between mixing and positive Lyapunov exponents are still far from understood \cite{AlvesEtAl2005}. Connections has been established heuristically in some cases such as \cite{LogButk2008, ColletEck2004}, and rigorously under additional assumptions such as the existence of 
finite Markov partitions \cite{Young_mixing_1999, Young_correl_1998}, or expansive property of the map \cite{AlvesEtAl_mixing_2004}.
\end{remark}

\begin{remark} [Autocorrelations] \label{rem:autocorr} Given any nonzero function $\psi \in L^2(\mu)$, we define its normalized autocorrelation (with respect to the underlying dynamics) as
\[ \AutCor(n; \psi):= \norm{\psi}^{-2} \left\langle U^n \psi, \psi \right\rangle . \]
Now, suppose that $\psi$ lies in $\mathcal{W}$. Let $\{ \mathfrak{w}_i \}_{i=1}^{M}$ be any orthonormal basis for the hypothesis space $\mathcal{W}$. Then
\[\begin{split}
    \AutCor(n; \psi)^2 &:= \norm{\psi}^{-4} \abs{ \left\langle U^n \psi, \psi \right\rangle }^2 = \norm{\psi}^{-4} \abs{ \left\langle U^n \psi, \sum_i \left\langle \mathfrak{w}_i, \psi \right\rangle \mathfrak{w}_i \right\rangle }^2 = \norm{\psi}^{-4} \abs{ \sum_{i=1}^{M} \left\langle \mathfrak{w}_i, \psi \right\rangle \left\langle U^n \psi, \mathfrak{w}_i \right\rangle }^2 \\
    &\leq \norm{\psi}^{-4} \sum_{i=1}^{M} \abs{ \left\langle \mathfrak{w}_i, \psi \right\rangle }^2 \sum_{i=1}^{M} \abs{  \left\langle U^n \psi, \mathfrak{w}_i \right\rangle }^2 = \norm{\psi}^{-2} \sum_{i=1}^{M} \abs{  \left\langle U^n \psi, \mathfrak{w}_i \right\rangle }^2 .
\end{split}\]
We show in Section~\ref{sec:proof:direct} that 
\[\text{error}_{\text{direct}}(n)^2 = \norm{ \left( \Id - \pi \right) U^n\phi }_{L^2(\mu)}^2 = \norm{\phi}^2 + \norm{\pi U^n \phi}^2 - 2\left\langle U^n \phi, \pi U^n \phi \right\rangle = \norm{\phi}^2 - \sum_{i=1}^{M} \abs{ \langle U^n\phi, \mathfrak{w}_i \rangle }^2. \]
Combining, we get
\begin{equation} \label{eqn:phi_hypo_autocorr}
    \phi\in\mathcal{W} \imply \text{error}_{\text{direct}}(n)^2 \leq \norm{\phi}^2 \left[ 1 - \AutCor(n; \phi)^2 \right].
\end{equation}
Thus, if the hypothesis space happens to include the initial observation map $\phi$, then the growth of the direct error is directly related to the autocorrelation function of the observed signal $\phi$. Autocorrelation is a statistical property of signals used frequently in classical timeseries analysis \citep[e.g.][]{box2015time}. Equation~\ref{eqn:phi_hypo_autocorr} thus combines concepts from learning theory, ergodic theory and timeseries analysis.
\end{remark}

\begin{remark}[Overfitting error vs projection error] \label{rem:overfit} 
Equation~\eqref{eqn:def:iter_bound} shows that the projection rate grows exponentially as expected from the presence of a Lyapunov exponent. The rate of growth is proportional to the smoothness of the learnt function $\hat{w}_1$, while the multiplicative constant is proportional to the projection error $\delta$. Thus this displays a trade-off between projection error and overfitting, one can minimize the projection error by increasing the hypothesis space. But the resulting learnt function may be too oscillatory, as a result increasing the instability of the feedback system \eqref{eqn:def:feedback_1}. On the other hand, if one approximates $w_1$ by a less oscillatory function, our base error $\delta$ itself will be large to begin with. To state this trade-off more precisely, define
\[ \theta(\epsilon) := \inf \SetDef{ \norm{D\hat{w}} }{ \hat{w} \in \mbox{ some hypothesis space } \mathcal{H}, \, \delta(\mathcal{H}) < \epsilon  } .\]
Then $\theta$ is a non-decreasing function of $\epsilon$, satisfying
\[ \theta\left( \norm{\phi} \right) = 0, \quad \lim_{\epsilon\to 0^+} \theta(\epsilon) = \norm{Dw}. \]
Thus \eqref{eqn:def:iter_bound} can be rewritten as 
\[ \text{error}_{\text{iter}}(k) = \epsilon \bigO{ k \theta(\epsilon)^k }, \quad \mbox{as } k\to\infty. \]
\end{remark}

\begin{remark}[Cocycle structure] \label{rem:cocyc}
Equations \eqref{eqn:def:iter_bound} and \eqref{eqn:ab_exact} together describe the evolution of the reconstructed dynamics as the normal and error parts respectively. 
The format of \eqref{eqn:ab_exact} is as a \emph{matrix cocycle}, one of the major contributions of our paper. It also bears similarity with the \emph{perturbed non-autonomous equations}, studied in the continuous-time case by Barreira and Valls 
\cite{BarreiraValls2006, BarreiraValls2005}. We look more closely at the growth or decay of these cocycles in Section~\ref{sec:cocyc:peturb} and Theorem \ref{thm:GraphT_growth}.
\end{remark}

\begin{remark}[Tightness of bounds] \label{rem:tighness} The bound derived in \eqref{eqn:def:iter_bound} is not a tight bound. We obtain a better estimate in Section~\ref{sec:proof:iterative} in terms of the full Lyapunov spectrum.
\end{remark}

This completes the statement of our main results. In Section~\ref{sec:discuss}, we discuss the consequences of our results, and look at some numerical verification. In Section~\ref{sec:cocyc} we review some concepts from random matrix cocycle theory. In the sections after that, we prove our theorems.

\section{Examples and discussions} \label{sec:discuss}

In this section we explore some of the consequences of Theorem~\ref{thm:direct} and Theorem~\ref{thm:iterative}.
\begin{enumerate}
\item According to Theorem~\ref{thm:direct}~(iii), for a weakly mixing system, the direct prediction loses track of the signal and eventually only retains the mean of $\phi$. The error from direct prediction thus converges to the variance of the observation $\phi$.
\item For a mixed spectrum system, the direct method should recover a portion of $\proj_{\Disc} \phi$, depending on the size of the hypothesis space $\mathcal{H}$, and lose track of the complementary component. Moreover, in equations \eqref{eqn:adjp30}, \eqref{eqn:lmd93c} we derive later, the error $\text{error}_{\text{direct}}(n)$ does not converge to the variance, but fluctuates periodically.
\item The growth of the iterative error on the other hand does not depend on spectral properties of the dynamics. It depends on the top Lyapunov exponent $\lambda_1(\hat{w})$ of the reconstructed dynamics, which in turn depends on the top Lyapunov exponent of the original dynamics $(\Omega, f, \mu)$ as well as the accuracy of the approximation $\hat{w}$. In a practical application, $\lambda_1(\hat{w})$ could be affected by the number of training data, and the smoothness of the true feedback function $w$.
\item Another feature of the iterative error is that unlike the direct error, it is not bounded by $\norm{ \phi }_{L^2(\mu)}$ as it not the result of the applications of pure operators, but it is the deviation between the trajectories of two different dynamical systems. Thus the error could be of the order of $\sqrt{2} \norm{\phi}_{L^2(\mu)}$.
\end{enumerate}

We next describe some numerical experiments conducted to verify and illustrate these universal behaviors.

\subsection{Numerical experiments}

\begin{figure}[!ht]\center
\includegraphics[width=.48\linewidth]{\figs 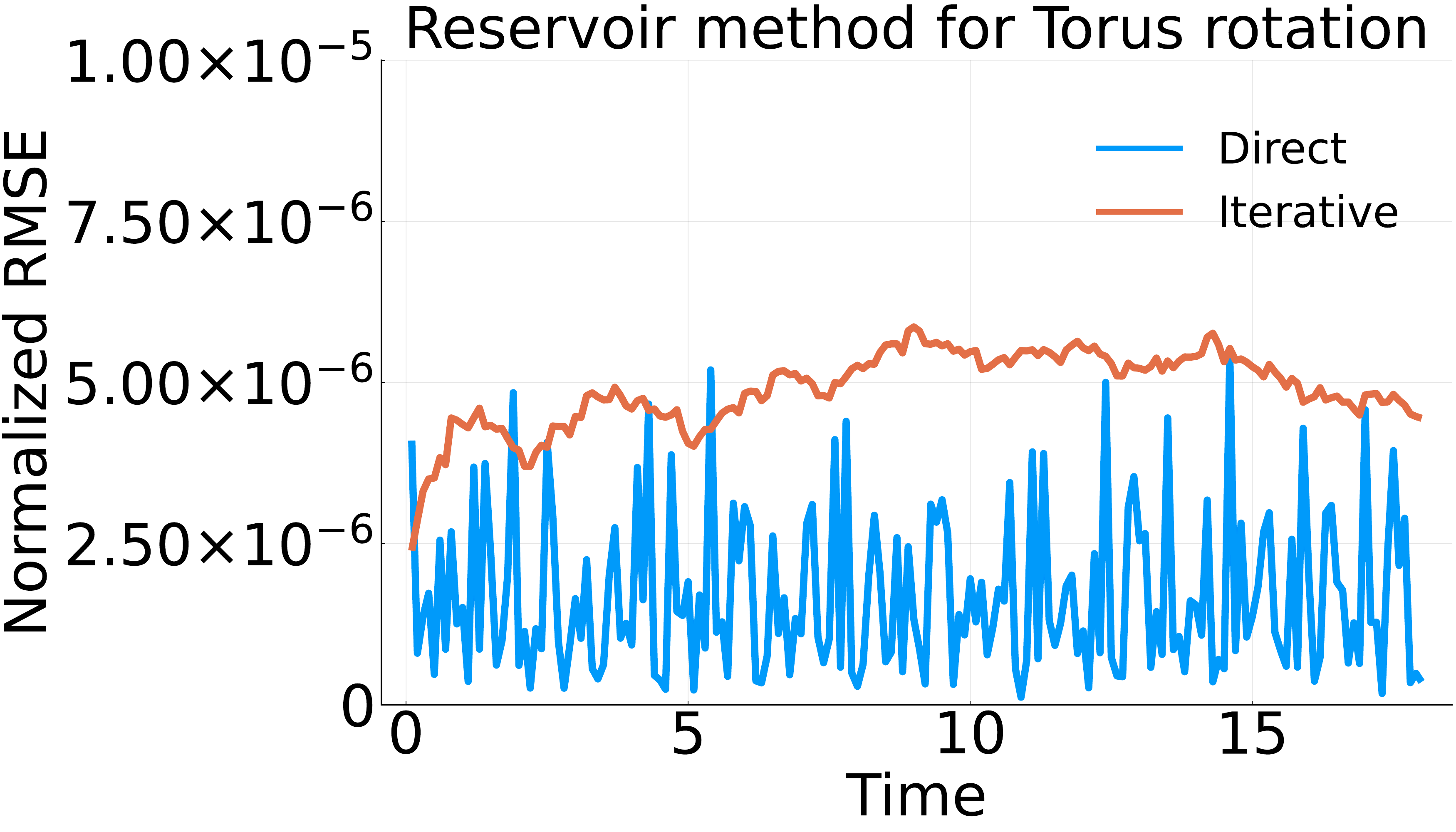}
\includegraphics[width=.48\linewidth]{\figs 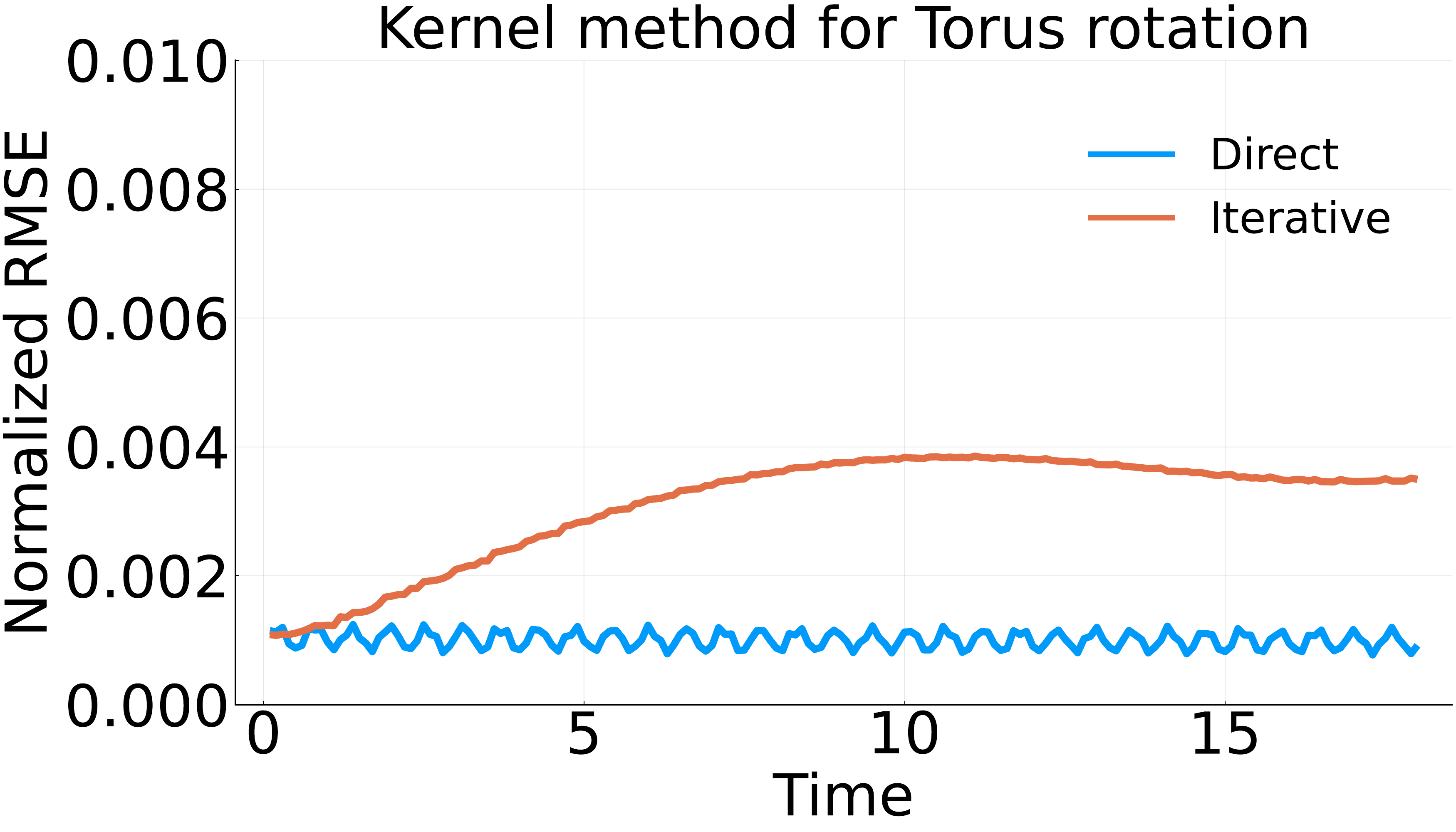}
\includegraphics[width=.48\linewidth]{\figs 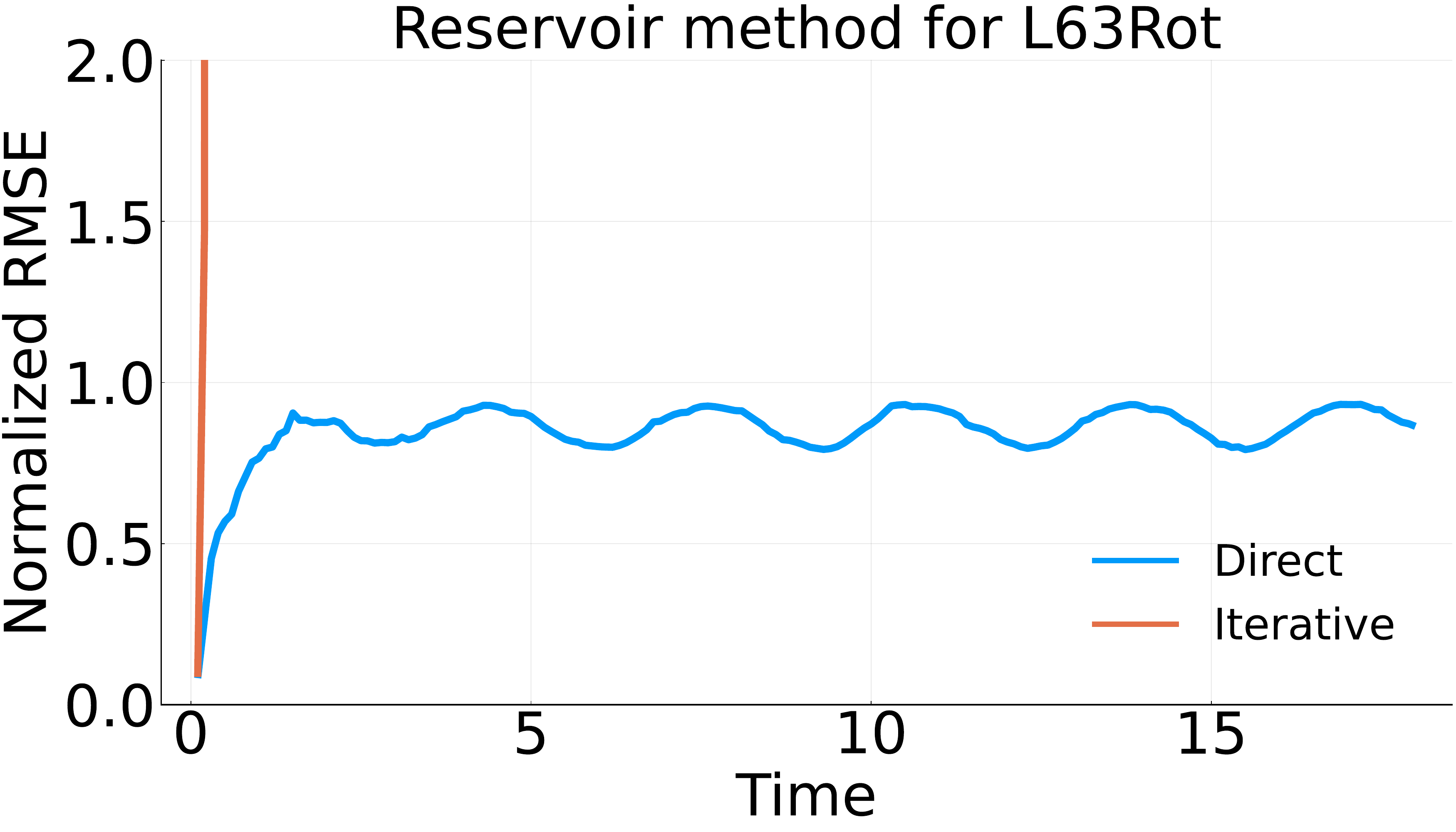}
\includegraphics[width=.48\linewidth]{\figs 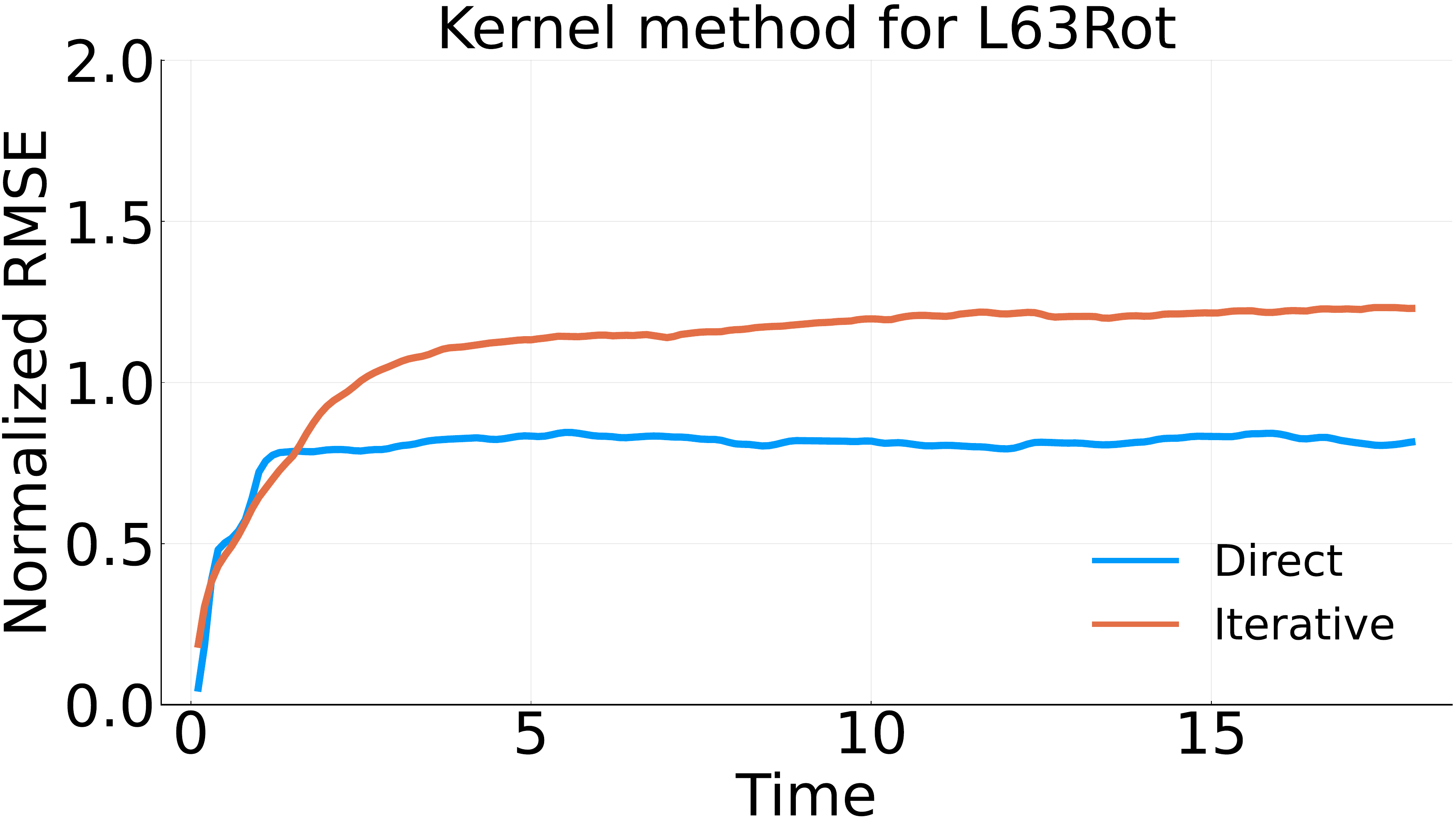}
\caption{Performance of the two reconstruction techniques for (i) a quasiperiodic rotation on the torus $\mathbb{T}^2$ (bottom panels); and (ii) a Cartesian product of L63 with a simple harmonic oscillator (top panels). By Theorem~\ref{thm:direct}~(iv), if one has a proper embedding and a good approximation $\hat{w}$ of $w$ one can achieve arbitrarily small errors for the torus rotation,  for all forecast times. This is supported by the fact that the direct methods for both the paradigms show errors of the order of $10^{-6}$. Since the torus rotation has all Lyapunov exponents zero, by Theorem~\ref{thm:iterative}~(ii) and Theorem~\ref{thm:lambda1}~(ii), the error from the iterative techniques should grow sub-exponentially, as supported by the figures. The system \eqref{eqn:L63Rot} is a mixed spectrum system, i.e., the splitting in \eqref{eqn:def:L2split} is non-trivial. The plots conform to the expected behavior discussed in points (1)--(4) of Section~\ref{sec:discuss}. }
\label{fig:T2_L63Rot}
\end{figure}

We now compare the reconstruction technique using the two paradigms of invariant-graphs and delay-coordinate embedding. The former was implemented using reservoir systems, and the latter using kernel regression. We applied both of these techniques to three systems
\begin{enumerate}[(i)]
    \item A quasiperiodic rotation on a two-dimensional torus [Figure~\ref{fig:T2_L63Rot}, top panel].
    \begin{equation} \label{eqn:T2}
        \left( \theta^1(n+1), \theta^{(2)}(n+1) \right) = \left( \theta^1(n+1), \theta^{(2)}(n+1) \right) + ( 
        \rho_1, \rho_2 ) \bmod 2\pi .
    \end{equation}
    Here $\theta^{(1)}$ and $\theta^{(2)}$ are angular coordinates on the torus, and $( \rho_1, \rho_2 ) $ is the rotation vector.
    \item The Lorenz-63 (L63) system [Figure~\ref{fig:compare}]. Let $\Phi^{t}_{\rm L63}$ denote the flow under the Lorenz63 system. Fix a sampling interval  $\Delta t$. This leads to the discrete time system :
    \begin{equation} \label{eqn:L63}
        \left( x_{n+1}, y_{n+1}, z_{n+1} \right) = \Psi^{\Delta t}_{\rm L63} \left( x_{n}, y_{n}, z_{n} \right) .
    \end{equation}
    $\Phi^{t}_{\rm L63}$ has a unique physical measure which has been proved to be nonuniformly hyperbolic and mixing. 
    \item A dynamical system formed by taking the Cartesian product of L63 with a simple harmonic oscillator, [Figure~\ref{fig:T2_L63Rot}, bottom panel]. Such a system will have a mixed spectrum, with the space $\Disc$ generated by a single base eigenfunction. We shall refer to this system as L63Rot.
    \begin{equation} \label{eqn:L63Rot} \begin{split}
        \theta_{n+1} &= \theta_n + \rho \bmod 2\pi \\
        \left( x_{n+1}, y_{n+1}, z_{n+1} \right) &= \Psi^{\Delta t}_{\rm L63} \left( x_{n}, y_{n}, z_{n} \right) .
    \end{split} \end{equation}
    This system is analysed in the bottom panel of Figure~\ref{fig:T2_L63Rot}.
\end{enumerate}
The results of our computations in Figures \ref{fig:compare}, \ref{fig:T2_L63Rot}, and \ref{fig:error_analysis} illustrate the consequences of Theorem~\ref{thm:direct}, Theorem~\ref{thm:iterative}. Figure~\ref{fig:error_analysis} highlights the two most important conclusions from our results. Firstly, as seen in the top row, the iterative errors grow at an exponential rate comparable to the top Lyapunov exponent $\lambda_1$. Secondly, if the hypothesis space is large enough, then the direct error is bounded above by a formula \eqref{eqn:phi_hypo_autocorr} involving the autocorrelation function of the direct observation map $\phi$. There are three things to note concerning Figure~\ref{fig:error_analysis}:
\begin{enumerate}[(i)]
    \item  Theorem~\ref{thm:iterative} gives an upper bound for the long term behavior of the iterative error. The theoretical bound for exponential rate of growth is indicated by the slope of the black dashed line, and is $\approx 0.9056\Delta t$. So although the initial exponential rate of errors seem to be larger than this, by choosing a multiplicative constant large enough, the error graph still remains underneath the theoretical curve. The offset of the straight dashed line equals the logarithm of this multiplicative constant. Thus as long as the long-term averaged error growth rate is less than $\approx 0.9056\Delta t$, there will always be a multiplicative constant large enough to satisfy the bounds in \eqref{eqn:def:iter_bound} and \eqref{eqn:def:iter_bound_L2}.
    \item The errors from the direct error occasionally cross the theoretical bound indicated by the black dashed line. This is because, the bound in \eqref{eqn:phi_hypo_autocorr} assumes that the learning error for $w$ is zero, i.e., $w$ lies in the hypothesis space $\mathcal{H}$. In most situations such as in our experiments, there is always a small learning error. An extended analysis for this situation is an interesting and open task.
    \item The errors from the iterative forecasts made using the reservoir blow up. This is because the standard reservoir computers are not guaranteed to be stable. This drawback is a important subject for further study.
\end{enumerate}

\begin{figure}[!ht]\center
\includegraphics[width=.48\linewidth]{\figs 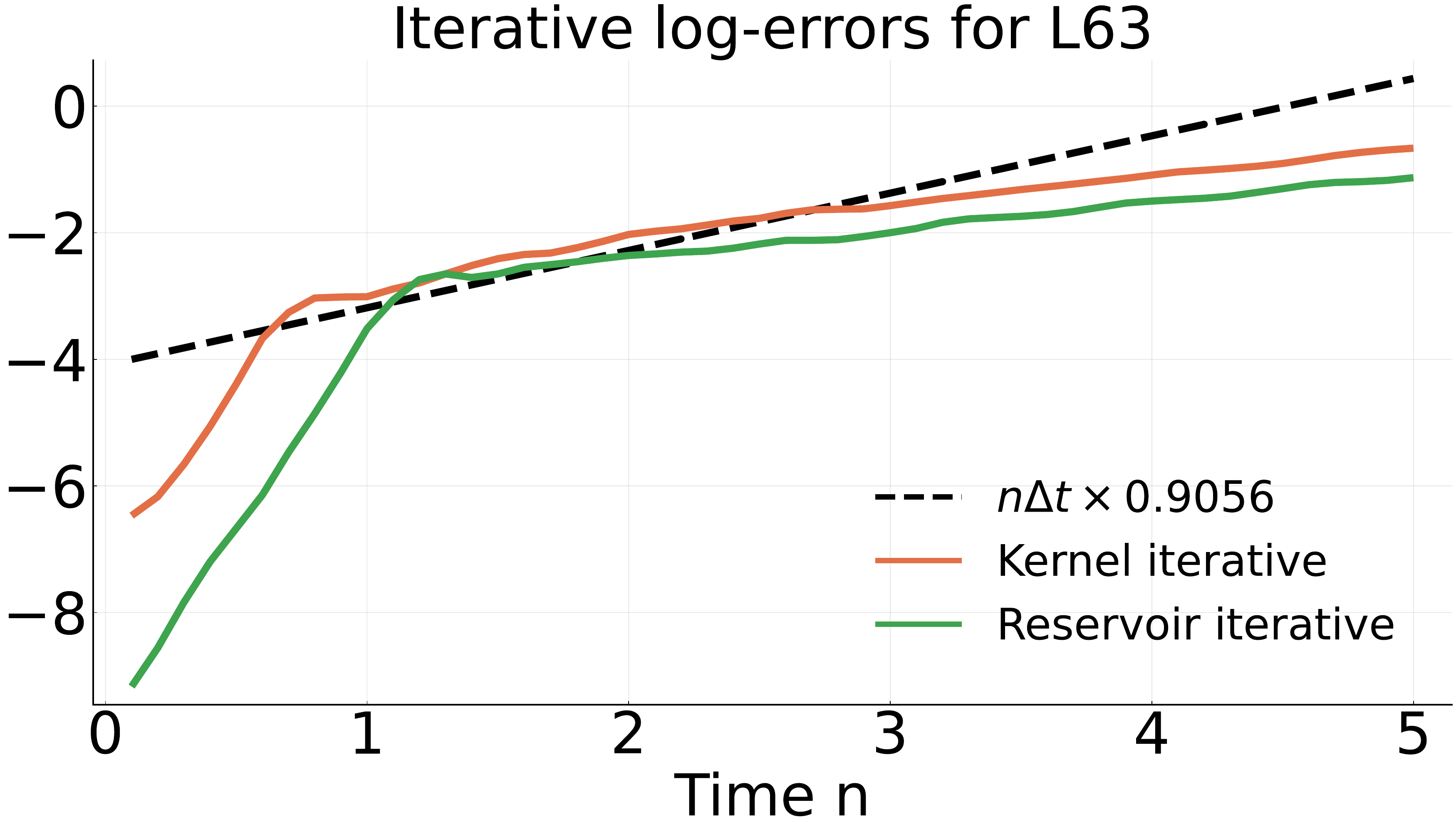}
\includegraphics[width=.48\linewidth]{\figs 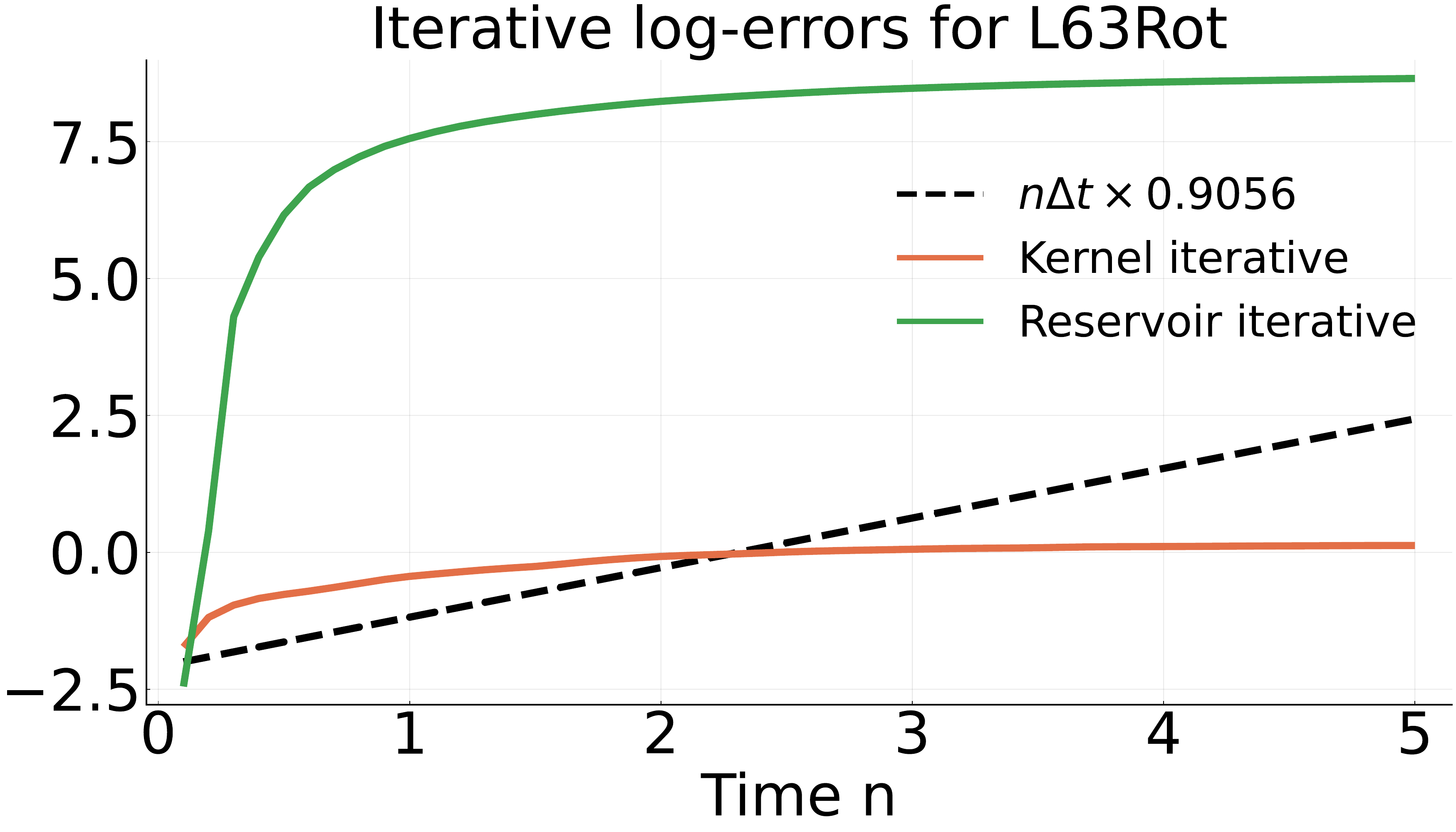}

\includegraphics[width=.48\linewidth]{\figs 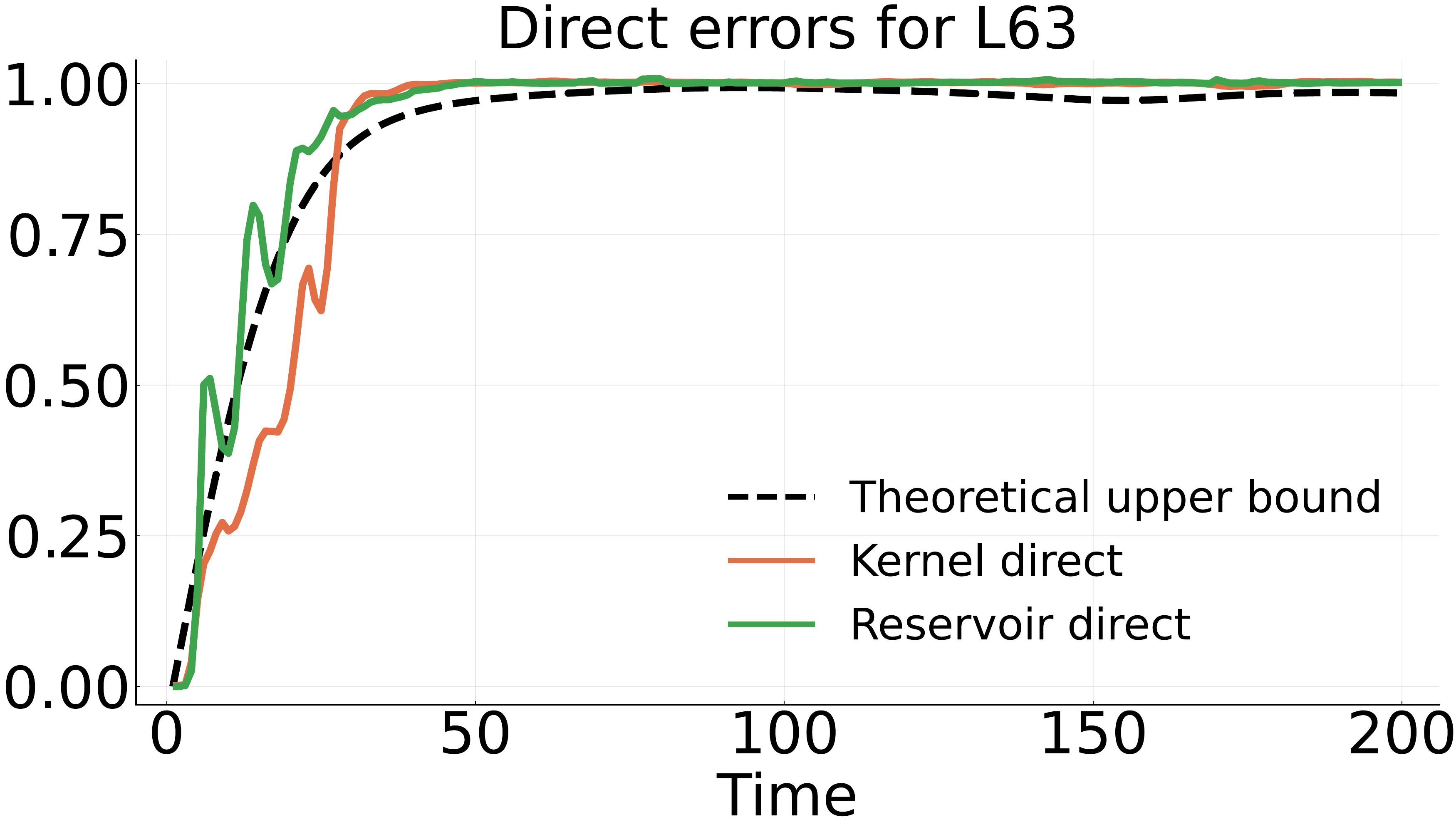}
\includegraphics[width=.48\linewidth]{\figs 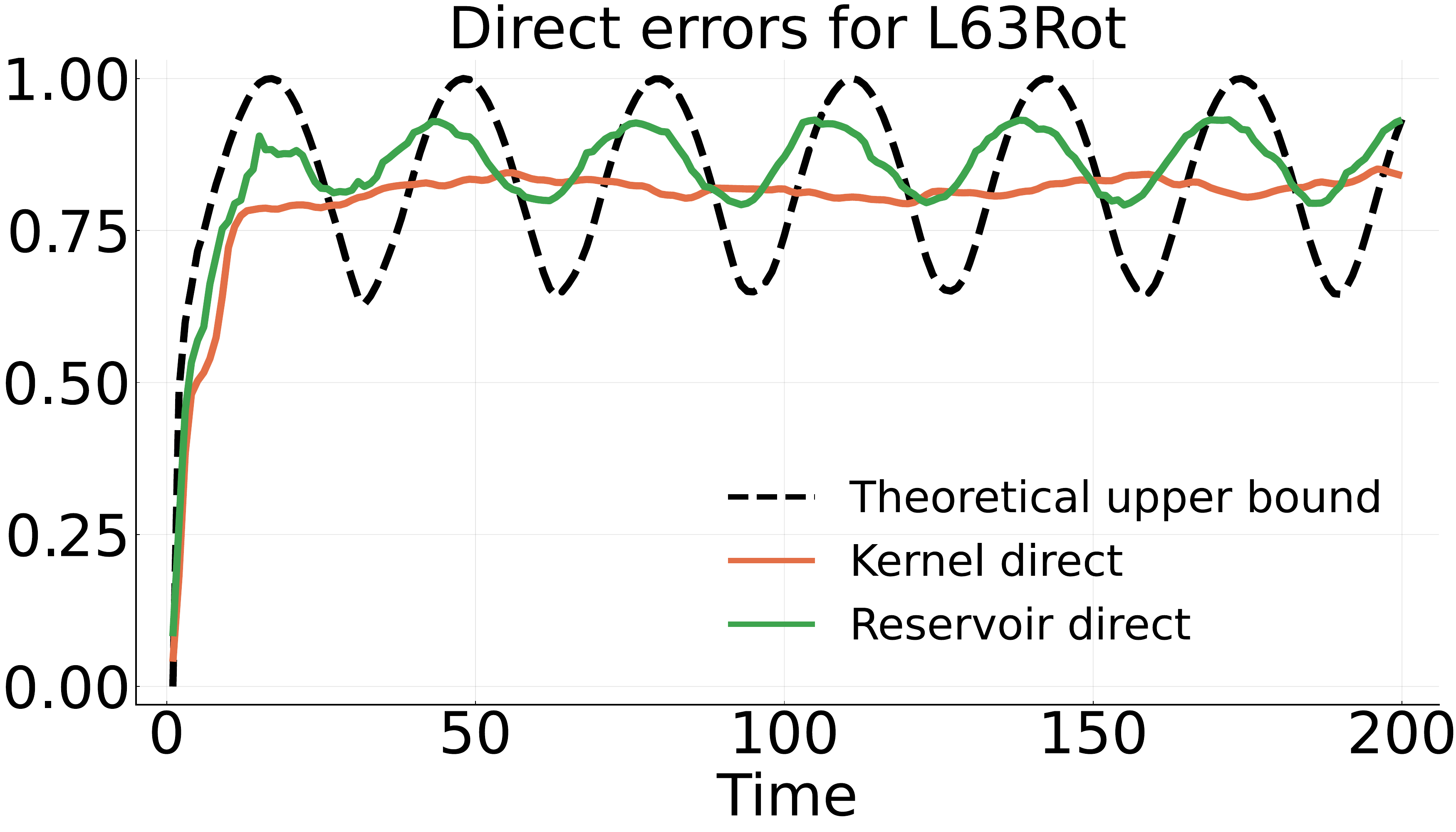}
\caption{ Error analysis using theoretical results. The top row show how the iterative errors of the L63 and L63Rot systems compare with the theoretical bounds of Theorem~\ref{thm:iterative}. The dashed lines each have slopes $\lambda_1 \Delta t$, with $\Delta t$ being the sampling interval, and $\lambda_1 \approx 0.9056$ for both the systems. The bottom row compares the errors from direct forecast with the autocorrelation bound of \eqref{eqn:phi_hypo_autocorr}. The assumption there that $w$ lies in the hypothesis space is not met, and thus we see some fluctuations above the theoretical upper bound. Together, these plots indicate conformity with our theoretical predictions. }
\label{fig:error_analysis}
\end{figure}

This completes the presentation of our main theoretical and numerical results. The framework that we have build provides many new directions of research into the field of learning of dynamical systems. We now present some other directions of work.

\subsection{Methods based on Koopman approximation} \label{sec:Koop_approx}

There are many techniques of forecasting which do not attempt to reconstruct the dynamics using some form of embedding. Instead, they directly try to approximate the Koopman operator, by tracking its action on a limited subspace of functions. In this section we review some of these methods, and relate them loosely to our main mathematical constructions.

\paragraph{Kernel analog forecasting } This technique \cite{ZhaoGiannakis2016} is a direct method for pointwise forecast, using locally decaying kernels. Suppose that $\mu$ is a smooth volume measure, and $p_\epsilon$ is a $C^2$, strictly positive definite, locally decaying kernel, which is Markov wrt $\mu$, i.e.,
\[ \int p_\epsilon( \cdot, y) d\mu(y) \equiv 1_{\Omega}. \]
Let $P_\epsilon$ be the kernel integral operator corresponding to $p_\epsilon$ and $\mu$. The core idea of this technique is the following pointwise estimate \citep[e.g.][]{CoifmanLafon2006, TrillosSlepcev2018}
\begin{equation} \label{eqn:skn4}
\abs{ \phi(\omega) - (P_\epsilon \phi)( \omega) } \leq \norm{ D\phi(\omega) } \epsilon + \bigO{\epsilon^2} , \quad \forall \phi\in C^1(\Omega), \quad \forall \omega \in \Omega .
\end{equation}
Using a change of variables formula, one can write
\[\begin{split}
(P_\epsilon U^t\phi)( \omega) &= \int p_\epsilon( \omega, \omega') (U^t \phi)( \omega' ) d\mu(\omega') = \int p_\epsilon( \omega, \omega') \phi( \Phi^t \omega' ) d\mu(\omega'), \quad \mbox{take } \omega'' := \Phi^t \omega' \\
&= \int p_\epsilon( \omega, \Phi^{-t} \omega'') \phi(\omega'') d (\Phi^t_*\mu)(\omega'') .
\end{split}\]
Therefore
\begin{equation} \label{eqn:dskn3}
( U^t\phi )(\omega) = \int p_\epsilon( \omega, \Phi^{-t} \omega'') \phi(\omega'') d (\Phi^t_*\mu)(\omega'') + \norm{ D (U^t\phi)(\omega) } \bigO{\epsilon} .
\end{equation}
In the above inequality the integral is approximated as
\[ \int p_\epsilon( \omega, \Phi^{-t} \omega'') \phi(\omega'') d (\Phi^t_*\mu)(\omega'') \approx \frac{1}{N} \sum_{n=0}^{N-1} p_\epsilon \left( \omega, \omega_{n} \right) \phi( \omega_{n+t} ) . \]
One of the major drawbacks of this method is that the pointwise approximation deteriorates as $t$ increases, since the function $U^t \phi$ becomes increasingly oscillatory.

\paragraph{Diffusion forecast} This involves choosing an orthonormal basis $\SetDef{ \phi_j }{ j \in \num }$ for some choice of a Hilbert space $H$, choosing the size of a truncation $L$, setting $H_L := \spn \SetDef{ \phi_j }{ j = 1,\ldots, L }$, and setting
\[ U^{(L)} := \pi_L U \pi_L, \quad U^{(L)}_n := \pi_L U^n \pi_L , \]
where $\pi_L : H\to H_L$ is the orthogonal projection. $U^{(L)}$ and $U^{(L)}_n$ are this $L$-dimensional approximations of the Koopman operator. Typical choices of $H$ are $L^2(\mu)$ or Sobolev spaces, and the $\phi_j$ are typically Laplacian eigenfunctions, or eigenfunctions of symmetric kernel integral operators. The choice between $U^{(L)}$ and $U^{(L)}_n$ is similar to the choice between the iterative and direct methods \eqref{eqn:def:err_iter} and \eqref{eqn:def:err_direct}. However, since these methods are not dependent on an actual embedding of the dynamics, the error of both of these forecasts grow at the same rate as the rate of decay of correlations.

\paragraph{Spectral techniques} The diffusion forecast in one among many techniques of approximating the Koopman operator. A more robust approach is a spectral approximation technique developed in \cite{DGJ_compactV_2018}, in which the goal is to approximate the spectral measure of the generator $V$ associated with a continuous time dynamical system. This technique is convergent and works for any kind of ergodic dynamical system. The Koopman group $\{ U^t : t\in\real \}$ is then approximated by the 1-parameter unitary group generated by a compact, spectral approximation $\tilde{V}$ of $V$. In this technique, $U^t$ is not approximated by its action on a fixed subspace of functions, but on a subspace spanned by \emph{approximate eigenfunctions}. This also leads to a discovery of nearly-periodic structures present within the possibly chaotic system.

\subsection{Future work} There are several promising directions of research that can be built upon our framework.
\begin{enumerate}
\item Multi-modal forecasting : One of the main ideas verified theoretically and via numerical experiments is that the error of direct forecasts increase at the rate of mixing of the system, which is usually larger than the top Lyapunov exponent. However, if there are quasiperiodic components, the direct method is effective in retaining that component. On the other hand, the error from the iterative method increases at a slower rate, but does not preserve the quasiperiodic components of the signal. The iteration model eventually behaves effectively in an uncorrelated fashion with the true dynamics. A \emph{multi-modal} forecasting technique would be a combination of these two modes, which combines their best features. 
\item The direct method is essentially $\pi U^n \phi$ and the iterative is $U^{n-1}\pi U\phi$. Another possibility is a $k$-time step iterative forecast which would be $U^{n-k}\pi U^k \phi$. As $k$ increases the leading term $\pi U^k \phi$ will have an error that decays according to the decay of correlations. To make amneds, we could incorporate several $k$-s in a window $[1,K]$ as
\[ \sum_{k=1}^K \alpha_k U^{n-k}\pi U^k \phi. \]
\item Yet another idea we are pursuing is \emph{ensemble forecasting} which has long been suggested as a prediction technique for chaotic systems \cite{HammelEtAl1988, DanforthYorke2006}.
\item Numerical approximation of optimal feedback function $\bar{w}$ - a key insight of Theorem~\ref{thm:lambda1} is that the top Lyapunov exponent of the reconstructed model depends on the behavior of the feedback function in a neighborhood of the image of the attractor. Theorem ~\ref{thm:iterative} then shows that this Lyapunov exponent describes the exponential rate at which the reconstructed model diverges from the true system. In most learning techniques, one tries to find a feedback function that simultaneously minimizes a fitting error, and a an oscillation penalty term. Theorems \ref{thm:lambda1} and \ref{thm:iterative} suggest that instead of measuring the overfitting error via the usual oscillation bond, a good candidate would be to take into account the behavior of $\bar{w}$ in a neighborhood of the dataset. The precise manner in which this ambient space behavior is to be translated into a penalty function is a promising field of research. A related and inseparable question of an appropriate choice of hypothesis space.
\item Effect of noise : Our techniques have not addressed the challenges posed by noise, either in measurement or dynamic. It is well known \citep[e.g.][]{SugiharaMay90} that measurement noise could be hard to distinguish from chaos and can severely restrict the accuracy of even short term predictions. Numerical averages rely on ergodic convergences, and the stability of ergodic averages to noise is a complicated and broad question of its own. Stability results have been shown in systems with SRB measures \cite{CowiesonYoung2005, BlumenthalYoung_equiv_2019}. In such settings, the use of Kalman filtering in a model free approach  \cite{HamiltonEtAl2016, HamiltonEtAl2017} may yield promising results.

\end{enumerate}

\section{Review of non-uniform hyperbolicity} \label{sec:cocyc}

This section provides an overview of the topics of matrix cocycles and Lyapunov exponent theory.

\subsection{Matrix cocycles} \label{sec:cocyc:basics}

Let Assumption~\ref{A:f} hold, and $G : \Omega\to GL(\real, m)$ be a measurable map. Then it generates a \emph{matrix cocycle} (see \cite{FroylandEtAl_coherent_2010}, \citep[][3.4]{Arnold_random_1991}), which is the map
\begin{equation}\label{eqn:ih7b}
\cocyc:\Omega\times \num_0\to GL(\real;m), \quad \cocyc(n, \omega) := 
\begin{cases}
\Id_d &\mbox{if } n=0\\
G(f^{n-1} \omega ) \cdots G(\omega) &\mbox{if } n>0 \\
G( f^{-|n|} \omega)^{-1} \cdots G(f^{-1} \omega )^{-1} &\mbox{if } n<0
\end{cases} .
\end{equation}
$\cocyc$ is called a $GL(m;\real)$-valued cocycle over the dynamics $(\Omega,\mu,f)$ generated by $f$. It has the property
\begin{equation}\label{eqn:def:cocyc}
\cocyc(m+n, \omega) = \cocyc(n, f^{m} \omega) \cdot \cocyc(m, \omega), \quad \forall \omega\in \Omega, \quad \forall m,n\in \integer. 
\end{equation}
Here the $\cdot$ notation denotes the matrix multiplication. Equation \eqref{eqn:def:cocyc} is the defining equation of a matrix cocycle. Conversely, given any map $\mathcal{G}:\Omega\times\num_0\to GL(m;\real)$ satisfying \eqref{eqn:def:cocyc}, one has a generator $G : \Omega \to GL(m;\real)$ so that $\cocyc$ is related to $G$ via \eqref{eqn:ih7b}. One of the immediate consequences of \eqref{eqn:def:cocyc} is that
\[ \cocyc(0, \omega) = \Id_m, \quad \cocyc( -n, \omega ) = \cocyc( n, f^{-n} \omega )^{-1}, \quad \forall \omega\in \Omega . \]
When the initial point $\omega_0\in\Omega$ is fixed, we will drop it from the notation and define
\[ \cocyc(n-1,j) := \cocyc(n-j, f^{j} \omega_0) = G(f^{n-1} \omega_0) \cdots G(f^{j} \omega_0) . \]
Matrix valued cocycles arise naturally in multiple ways in dynamical systems. For example, if $\Omega$ is a $m$-dimensional manifold and $f$ a differentiable map, then $\cocyc( \omega,n) := Df^n(\omega)$ is a $GL(m;\real)$ cocycle. 

\begin{proposition}[Multiplicative ergodic theorem ] \label{prop:MET} \cite{Ruelle_Lyapu_1979} \citep[][Thm 4.1, pg 10]{FroylandEtAl_coherent_2010}
Let Assumption~\ref{A:f} hold, then there exists a forward invariant set $\Omega'$ of full $\mu$-measure such that the limit
\[\Lambda(\omega) := \lim_{n\to\infty} \Matrix{ \cocyc(n, \omega)^* \cocyc(n, \omega) }^{1/2n}\]
exists for every $\omega\in \Omega'$. Moreover, there is a splitting $\real^M = \oplus_{i=1}^{l} E_i(\omega)$ and constants $\lambda_1 \geq \ldots \geq \lambda_l \geq -\infty$ such that
\[v\in E_i \imply \lim_{n\to\infty} \frac{1}{n} \ln \norm{ \cocyc(n, \omega) v } = \lambda_i .\]
The numbers $l$, $\lambda_1, \ldots, \lambda_l$ are constant on $\Omega'$. The subspaces $E_i$ depend measurably on $\omega\in \Omega'$ and 
\begin{equation} \label{eqn:oixks4}
    \cocyc(n, \omega) v \in E_i\left( f^n \omega \right), \quad \forall v\in E_i(\omega), \, \forall n\in \integer .
\end{equation}
\end{proposition}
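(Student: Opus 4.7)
}
My plan is to follow the classical Oseledets/Raghunathan approach, which proceeds in three stages: existence of exponential growth rates via a subadditive ergodic argument, convergence of the symmetric matrices $\left(\cocyc(n,\omega)^{*}\cocyc(n,\omega)\right)^{1/2n}$, and construction of the invariant splitting. Note that the statement in the excerpt actually concerns $GL(m;\real)$-valued cocycles with $M = m$; I will write $m$ throughout.

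First, I would apply Kingman's subadditive ergodic theorem to the sequences $\psi_{k}(n,\omega) := \log \norm{ \wedge^{k} \cocyc(n,\omega) }$ for $k=1,\ldots,m$. The cocycle identity \eqref{eqn:def:cocyc}, together with submultiplicativity of the operator norm on each exterior power, yields subadditivity along the orbit: $\psi_{k}(n_{1}+n_{2},\omega) \leq \psi_{k}(n_{2}, f^{n_{1}}\omega) + \psi_{k}(n_{1},\omega)$. Ergodicity of $\mu$ then produces $f$-invariant almost-sure constants $\Lambda_{k} := \lim_{n\to\infty} \tfrac{1}{n} \psi_{k}(n,\omega)$ on a full-measure set $\Omega_{k}$. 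The Lyapunov exponents are extracted by $\lambda_{1} := \Lambda_{1}$ and $\lambda_{k} := \Lambda_{k} - \Lambda_{k-1}$, with the ordering $\lambda_{1} \geq \cdots \geq \lambda_{m}$ following from log-concavity of singular value products. The distinct values among these give the list $\lambda_{1} > \cdots > \lambda_{l}$ in the statement, together with multiplicities.

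Second, for the matrix limit $\Lambda(\omega)$, I would analyse the polar form $\cocyc(n,\omega)^{*}\cocyc(n,\omega) = V_{n} D_{n}^{2} V_{n}^{*}$. The subadditive step above shows that for each $k$, the $k$-th singular value satisfies $\tfrac{1}{n}\log \sigma_{k}^{(n)} \to \lambda_{k}$, so the eigenvalues of $(\cocyc(n,\omega)^{*}\cocyc(n,\omega))^{1/2n}$ converge to $e^{\lambda_{i}}$ with the correct multiplicities. The crux is convergence of the right-singular flag. Define $F_{k}(n,\omega)$ to be the span of the top-$k$ right singular vectors of $\cocyc(n,\omega)$. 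Using the spectral gap $\lambda_{k} > \lambda_{k+1}$ together with Raghunathan's angle estimate — which bounds the Grassmannian distance between $F_{k}(n,\omega)$ and $F_{k}(n+1,\omega)$ by a geometric sum in $\exp\bigl(-n(\lambda_{k}-\lambda_{k+1})\bigr)$ up to subexponential factors — I would conclude that $F_{k}(n,\omega)$ is Cauchy for every $k$ with $\lambda_{k} > \lambda_{k+1}$. The limit $\Lambda(\omega)$ is then reconstructed from these flag limits together with the limit eigenvalues.

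Third, I would define $E_{i}(\omega)$ as the eigenspace of $\Lambda(\omega)$ for the eigenvalue $e^{\lambda_{i}}$, yielding the splitting $\real^{m} = \bigoplus_{i=1}^{l} E_{i}(\omega)$ and the growth identification $v \in E_{i}(\omega) \setminus \{0\} \Rightarrow n^{-1}\log\norm{\cocyc(n,\omega)v} \to \lambda_{i}$. Cocycle-equivariance \eqref{eqn:oixks4} follows from a direct computation: $\cocyc(n+j,\omega) = \cocyc(j, f^{n}\omega)\,\cocyc(n,\omega)$ implies the Cauchy sequences defining $\Lambda(\omega)$ and $\Lambda(f^{n}\omega)$ are related by the fixed finite factor $\cocyc(n,\omega)$, which forces $\cocyc(n,\omega)E_{i}(\omega) \subseteq E_{i}(f^{n}\omega)$; invertibility of $\cocyc(n,\omega)$ upgrades the inclusion to equality. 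The forward-invariant set $\Omega'$ is obtained by intersecting the Kingman convergence sets $\Omega_{k}$ with the sets on which the flag limits exist, and then saturating under forward iterates — this is possible since each set is already $f$-invariant modulo null sets.

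The main obstacle is the second stage: proving convergence, not merely boundedness, of the singular subspaces $F_{k}(n,\omega)$. Boundedness and existence of accumulation points in the compact Grassmannians are immediate, but ruling out oscillation requires the summable-angle estimate, which in turn needs the strict spectral gap $\lambda_{k} > \lambda_{k+1}$ together with a Borel-Cantelli argument to absorb the subexponential fluctuations coming from Kingman's theorem. Once this is in hand, the remaining pieces — assembling $\Lambda(\omega)$, identifying eigenspaces, and checking the equivariance \eqref{eqn:oixks4} — are essentially bookkeeping on top of the cocycle identity and the polar decomposition.
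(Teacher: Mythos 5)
The paper does not prove this proposition: it is stated as a citation to Ruelle and to Froyland et al.\ (their Theorem~4.1), so there is no in-paper argument to compare against. Evaluated on its own merits, your sketch correctly reproduces the standard Kingman-plus-Raghunathan route to the \emph{one-sided} multiplicative ergodic theorem: exterior powers and subadditivity give the singular-value growth rates, and the angle estimate gives convergence of the right-singular flag and hence of $\Lambda(\omega)$. That part is fine (modulo spelling out the Borel--Cantelli step you gesture at, and noting that the $\lambda_l \geq -\infty$ clause in the statement is vacuous once the cocycle is assumed $GL(m;\real)$-valued).

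The genuine gap is in the third stage. You define $E_i(\omega)$ as the eigenspace of $\Lambda(\omega)$ for the eigenvalue $e^{\lambda_i}$ and then claim that the cocycle relation ``forces'' $\cocyc(n,\omega)E_i(\omega) \subseteq E_i(f^n\omega)$. That inference does not go through. The limit matrix $\Lambda(f^n\omega)$ is constructed from $\cocyc(k,f^n\omega)^{*}\cocyc(k,f^n\omega)$, which is congruent, not similar, to $\cocyc(k+n,\omega)^{*}\cocyc(k+n,\omega)$ via the fixed factor $\cocyc(n,\omega)$; congruence does not preserve eigenspaces, and taking $2k$-th roots does not commute with the conjugation either. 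What the direct computation actually yields is equivariance of the \emph{filtration} $V_i(\omega) := \bigoplus_{j\geq i} E_j(\omega)$, namely $\cocyc(n,\omega)V_i(\omega) = V_i(f^n\omega)$ for $n \geq 0$ — the classical one-sided Oseledets flag. The individual eigenspaces of $\Lambda(\omega)$ are, in general, \emph{not} carried onto each other by the cocycle. The stated conclusion \eqref{eqn:oixks4} is the two-sided version, quantified over all $n\in\integer$, and it requires running the whole construction a second time for the inverse cocycle $\cocyc(-n,\cdot)$ to obtain a backward flag $V_i^{-}(\omega)$, and then defining $E_i(\omega) := V_i^{+}(\omega) \cap V_{i}^{-}(\omega)$ (intersection of the appropriate forward and backward pieces). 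With that definition the equivariance for all $n\in\integer$ and the direct-sum decomposition both hold, but the eigenspaces of the one-sided limit matrix are not, by themselves, the Oseledets splitting. As written, your argument proves the filtration theorem, not the splitting theorem that the proposition asserts.
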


The vector spaces $E_i$ are called the Lyapunov subspaces and $\lambda_i$ the Lyapunov exponents. These measure the asymptotic rate of expansion or contraction along the Lyapunov directions.

\subsection{Pesin sets} \label{sec:cocyc:pesin} 

Lyapunov exponents describe the asymptotic behavior of orbits and not the local differential properties of the map. By Proposition~\ref{prop:MET}, at almost every $\omega$, the limits $\lambda_i$ are attained along the various Oseledet subspaces $E_i$. However, the rate at which the limits are attained are in general not uniform or even continuous as a function of $\omega$. The Oseledet subspace of $T_\omega \tilde{\Omega}$ corresponding to $\lambda_i$ itself is usually a measurable but non-continuous function of $\omega$.

Pesin sets \cite{Pesin_families_1976, Ruelle_Lyapu_1979} were introduced to capture the regularity and boundedness in the highly non-uniform nature of the Oseledet splitting. Fix a constant $\leak>0$, such that $\leak<< \min_i \abs{ \lambda_i }$. $\leak$ is called the \emph{leakage rate}. Then there is a nested sequence of compact sets $\Omega_1\subseteq \Omega_2 \subseteq \Omega_3 \subseteq \ldots$ whose union has $\mu$-measure $1$, such that for every $k\in\num$
\[ e^{-k\leak} e^{(\lambda_i-\leak)n} \leq \norm{Df^n(\omega) | E_i} \leq e^{k\leak} e^{(\lambda_i+\leak)n} \quad \forall \omega\in \Omega_k, \, \forall i\in \{1,\ldots,r\}, \, \forall n\in\integer . \]
Moreover, the subspaces $E_i$ vary smoothly on the sets $\Omega_k$.

Although the norm of the Jacobian $Df^n(\omega)$ when restricted to $E_i$ grows asymptotically at the rate $e^{\lambda_i n}$, this exact exponential growth need not be attained for finite $n$. There is a constant $\mult(\omega) = \mult(\omega; \leak)$ depending on $\omega$ such that
\begin{equation} \label{eqn:NUH_fluct}
\frac{1}{\mult(\omega)} e^{(\lambda_i-\leak)n} \leq \norm{Df^n(\omega) | E_i} \leq \mult(\omega) e^{(\lambda_i+\leak)n} , \quad \forall n\in\integer, \; \forall i\in 1,\ldots, r .
\end{equation}
$\mult(\omega)$ plays the role of a multiplicative constant, and $\leak$ behaves as the extent of fluctuation around the limiting rate $\lambda_i$. The decomposition into Pesin sets imply that if $\omega$ is restricted to $\Omega_k$, then $\mult(\omega)$ can be uniformly bounded by $e^{k\leak}$.

Thus the Pesin sets $\Omega_k$ have uniformly hyperbolic behavior, However they need not be uniformly hyperbolic sets, as they are not necessarily invariant sets. In general $f(\Omega_k) \subseteq \Omega_{k+1}$. Note that if $\Omega_k$ is invariant, then it is a uniformly hyperbolic set. In spite of not being invariant sets, Pesin sets are useful for obtaining concrete bounds on the rate of hyperbolicity. The Poincare recurrence theorem guarantees that a typical trajectory returns to a Pesin set infinitely many times. These two properties of recurrence and uniform hyperbolic rates has been used effectively to establish strong global properties of the system, such as approximation by periodic points \cite{Katok_periodiC_1980}, shadowing \cite{WangSun2010} and metric properties of local stable and unstable manifolds \cite{LedYoung_SRBI_1985, LedYoung_SRBII_1985}. These techniques will play an important role in our proofs.

\paragraph{$L^p$ Pesin sets} For every choice of the leakage rate $\leak$, the Pesin sets $\Omega_k$ grow to form an invariant set of full measure. The rate at which $\mu(\Omega_k)$ approaches $1$ is an important consideration. It is important for obtaining estimates on various statistical properties of the nonuniformly hyerbolic system. However, there is not much estimates on how quickly the Pesin sets grow, except under additional conditions, such as the existence of reasonably good Markov approximations \citep[e.g.][]{GouezStoy2019}. For our purpose, we  say that a nonuniformly hyperbolic system has \emph{$L^p$ Pesin sets} if the function $\omega\mapsto \mult(\omega; \leak)$ is $L^p$-integrable wrt $\omega$. This property will be used by us to obtain global bounds from local behavior, in Theorem~\ref{thm:GraphT_growth} later.

\subsection{Lyapunov exponents in Euclidean space} \label{sec:cocyc:metric} 

Given a dynamical system on $F:\real^M \to \real^M$, one has the following alternative definition of Lyapunov exponents :
\[\lambda(z,v) := \limsup_{n\to\infty} \frac{1}{n} \ln \lim_{\delta\to 0^+} \frac{1}{\delta} \norm{ F^n(z+\delta v) - F^n(z) }, \quad z\in \real^M, \, v\in\real^M. \]

\begin{lemma} \label{lem:sjdfb4}
Let $\reconstruct : \real^M \to \real^M$ be a $C^1$ map, with an invariant ergodic measure $\bar{\mu}$ with compact support $X$. Let $T_{X} \real^M = E_1\oplus E_2\oplus \cdots\oplus E_k$ be a splitting of $T\real^M$ restricted to $X$. Let $\lambda_1$ be the maximal Lyapunov exponent wrt the measure $\bar{\mu}$. Then for $\mu$-a.e. $z\in X$,
\[ \lambda_1 = \max_{1\leq i\leq k} \sup_{v\in E_i(z)\setminus\{0\}} \lambda(z,v) = \sup_{v\in T_z \real^M \setminus\{0\}} \lambda(z,v) . \]
\end{lemma}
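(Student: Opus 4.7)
Since $\reconstruct$ is $C^1$, the chain rule identifies the inner limit in the definition of $\lambda(z,v)$ with the norm of the directional derivative: for every $n$,
\[
\lim_{\delta \to 0^+} \frac{1}{\delta} \norm{\reconstruct^n(z+\delta v) - \reconstruct^n(z)} = \norm{D\reconstruct^n(z)\, v}.
\]
Consequently $\lambda(z,v) = \limsup_{n\to\infty} \frac{1}{n} \ln \norm{D\reconstruct^n(z)\, v}$, so the statement reduces to a question about the asymptotic growth of the matrix cocycle $\cocyc(n,z) := D\reconstruct^n(z)$ over the ergodic base $(X, \bar\mu, \reconstruct)$.

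I would next invoke the Multiplicative Ergodic Theorem (Proposition~\ref{prop:MET}) applied to $\cocyc$. It provides, on a forward-invariant set of full $\bar\mu$-measure, an Oseledet splitting $T_z \real^M = F_1(z) \oplus \cdots \oplus F_l(z)$ and Lyapunov exponents $\lambda_1 \geq \cdots \geq \lambda_l$ such that $\lim_n \frac{1}{n} \ln \norm{\cocyc(n,z) v} = \lambda_i$ for every $v \in F_i(z) \setminus \{0\}$. Writing an arbitrary nonzero $v$ as $v = v^{(1)} + \cdots + v^{(l)}$ along this splitting, the exponent $\lambda(z,v)$ is governed by the smallest index with $v^{(i)} \neq 0$; in particular $\lambda(z,v) \leq \lambda_1$, with equality achieved on any nonzero $v \in F_1(z)$. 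This proves the second equality $\sup_{v \in T_z\real^M \setminus \{0\}} \lambda(z,v) = \lambda_1$.

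For the first equality, the inequality $\max_i \sup_{v \in E_i(z)} \lambda(z,v) \leq \lambda_1$ is immediate since $E_i(z) \subset T_z\real^M$. For the reverse direction, I take any nonzero $v \in F_1(z)$, so that $\lambda(z,v) = \lambda_1$, and decompose it along the prescribed splitting as $v = v_1 + \cdots + v_k$ with $v_j \in E_j(z)$. The triangle inequality yields $\norm{\cocyc(n,z)\, v} \leq k \max_j \norm{\cocyc(n,z)\, v_j}$, whence
\[
\lambda_1 = \lim_n \frac{1}{n} \ln \norm{\cocyc(n,z)\, v} \leq \limsup_n \max_j \frac{1}{n} \ln \norm{\cocyc(n,z)\, v_j} \leq \max_j \lambda(z, v_j),
\]
the last step being the elementary fact that $\limsup_n \max_j b_j^{(n)} \leq \max_j \limsup_n b_j^{(n)}$ for a finite index set (pigeonhole: some $j$ attains the running maximum along a realizing subsequence). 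Since at least one $v_j$ is nonzero, this furnishes some $j_0$ with $\sup_{v \in E_{j_0}(z)} \lambda(z,v) \geq \lambda_1$, closing the chain of equalities.

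The main conceptual obstacle is that the splitting $E_1 \oplus \cdots \oplus E_k$ is prescribed abstractly and carries no assumed dynamical relation to the Oseledet subspaces produced by the MET. The triangle-inequality decomposition above bypasses this because exponential growth rates of finite sums are controlled by the maximum rate of their summands; any direct-sum decomposition, adapted to the dynamics or not, therefore suffices.
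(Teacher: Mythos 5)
Your argument is correct and fills in precisely the details the paper omits --- the paper states only that Lemma~\ref{lem:sjdfb4} ``is a direct consequence of the definition of Lyapunov exponents and will be omitted.'' Your identification of $\lambda(z,v)$ with the top growth rate of the derivative cocycle, the appeal to the MET (Proposition~\ref{prop:MET}) to get $\sup_{v\neq 0}\lambda(z,v)=\lambda_1$, and, most importantly, the triangle-inequality plus pigeonhole step showing that the first equality holds for an \emph{arbitrary} direct-sum splitting $E_1\oplus\cdots\oplus E_k$ (not assumed to be Oseledet-adapted) is exactly the one nontrivial point that a reader must supply; you identified it correctly and resolved it cleanly.
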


The proof is a direct consequence of the definition of Lyapunov exponents and will be omitted. We next consider a special type of perturbed sequences of points.

\paragraph{Pseudo-trajectories} Let $\reconstruct: \real^M \to \real^M$ be a $C^1$ map on a manifold $M$, with an ergodic invariant measure $\bar{\mu}$ with compact support $X$. Fix a sequence of positive numbers $\left(c_j\right)_{j=0}^\infty$ and initial point $z_0\in X$. Now define
\begin{equation}\begin{split} \label{eqn:def:pseudo}
\mathcal{S} \left( z_0, \left(c_j\right)_{j=0}^\infty \right) &:= \SetDef{ \left(z_j\right)_{j=0}^\infty \in \real^M }{ z_{n+1} = \reconstruct(z'_n), \, d(z'_{n+1}, z_{n+1}) \leq c_n d(z'_n, z_n), \, \forall n\in\num }. \\
\mathcal{S} \left( \delta; z_0, \left(c_j\right)_{j=0}^\infty \right) &:= \SetDef{ \left(z_j\right)_{j=0}^\infty \in \mathcal{S} \left( z_0, \left(c_j\right)_{j=0}^\infty \right) }{ d(z'_0, z_0) \leq \delta }.
\end{split}\end{equation}
Thus $\mathcal{S} \left( z_0, \left(c_j\right)_{j=0}^\infty \right)$  is the set of all pseudo-trajectories $z_n \in \real^m$ such that at each stage $n$, $z_{n+1}$ is the image of a perturbation $z'_n$ of $z_n$. Moreover, the perturbation to $z_{n+1}$ is at most $c_n$ times the perturbation to $z_n$. The set $\mathcal{S} \left( \delta; z_0, \left(c_j\right)_{j=0}^\infty \right)$ are the subset of these sequences such that the initial perturbation is no more than $\delta$. Thus $S(z_0, c) = \cup_{\delta>0} S(\delta; z_0, c) $ The figure below illustrates such a sequence in $\mathcal{S} \left( \delta; z_0, \left(c_j\right)_{j=0}^\infty \right)$ :
\[\begin{tikzcd}
z_0 \arrow[dashed]{d}{+\vec\delta_0} \arrow{r}{\reconstruct} & \reconstruct(z_0)  \arrow{r}{\reconstruct} & \reconstruct^2(z_0)  \arrow{r}{\reconstruct} & \ldots  \arrow{r}{\reconstruct} & \reconstruct^n(z_0) \arrow[dashed]{ddd}{\dev(n,\delta)} \\
z'_0 \arrow{r}{\reconstruct} & z_1 = \reconstruct(z'_0)  \arrow[dashed]{d}{+\vec\delta_1} \\
\ &z'_1 \arrow{r}{\reconstruct} & z_2 = \reconstruct(z'_1) \arrow[dashed]{d}{+\vec\delta_2} \\
\ & \ & z'_2 \arrow{r}{\reconstruct} & \ldots \arrow{r}{\reconstruct} & z_n = \reconstruct(z'_{n-1})
\end{tikzcd}\]
The top row shows a the reference trajectory $\{ \reconstruct^n z_0 : n\in\num_0 \}$, starting at an initial point $z_0$. At each time step $n=1,2,\ldots$, $z_n$ is the image of a point $z'_{n-1}$, which is a $\vec\delta_{n-1}$ perturbation of the earlier point $z_{n-1}$. The magnitude of $\vec\delta_{n}$ is bounded by $c_{n-1} \delta_{n-1}$. Thus at every stage, the error accumulates and is scaled by the factor of at most $c_n$. The magnitude of the initial perturbation is $\norm{\vec\delta_0} \leq \delta$. 
Such perturbed sequences arise in our proof of Theorem~\ref{thm:lambda1}. We study the rate of growth of the divergence between the two trajectories, as a ratio of the initial error magnitude $\delta$. The maximum possible deviation after $n$ steps can be written as
\[\dev( z_0, n, \delta) := \sup \SetDef{  d\left( z_{n}, \tau^n(z_0) \right) }{ \left(z_j\right)_{j=0}^\infty \in \mathcal{S} \left( \delta;z_0, \left(c_j\right)_{j=0}^\infty \right) } . \]
We are more interested in the growth of this deviation as a multiplier of the initial error margin $\delta$, namely
\[ \dev(z_0, n) := \limsup_{\delta\to 0^+} \frac{1}{\delta}\dev( z_0, n, \delta) .\]
%
%
We next derive the asymptotic rate at which these rate of divergences $\dev(z_0, n)$ grow.

\begin{proposition}[$\delta$-pseudo trajectory] \label{prop:srg7}
Let $\reconstruct : M\to M$ be a $C^1$ map on a manifold $M$, with an ergodic, non-uniformly hyperbolic invariant measure $\bar{\mu}$ with compact support $X$. Assume the notation in \eqref{eqn:def:pseudo}. Let $\left(c_j\right)_{j=0}^\infty$ be a sequence positive numbers for which the limit $C = \lim_{N\to\infty} \frac{1}{N} \sum_{j=0}^{N} \ln c_j $ exists. Then 
\[ \limsup_{n\to\infty} \frac{1}{n} \ln \dev(z_0, n) \leq \lambda_1(\tau) + C , \quad \bar{\mu}-\mbox{a.e. } z_0\in X . \]
\end{proposition}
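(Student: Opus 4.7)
My plan is to reduce the bound to a variational estimate for the linearised dynamics along the reference orbit $\{\reconstruct^k z_0\}$, and then combine Oseledets-type bounds for the Jacobian cocycle with the Cesaro limit of $\ln c_j$.

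\textbf{Step 1: Linearisation.} Write $e_k := z_k - \reconstruct^k z_0$ for the cumulative deviation and $\vec{\delta}_k := z'_k - z_k$ for the per-step perturbation. The defining constraints give $e_0 = 0$ and $\|\vec{\delta}_k\| \leq \delta\,\prod_{j=0}^{k-1} c_j$. A Taylor expansion of $\reconstruct$ at $\reconstruct^k z_0$ produces
\[
e_{k+1} \;=\; D\reconstruct(\reconstruct^k z_0)\bigl(e_k + \vec{\delta}_k\bigr) \;+\; O\bigl(\|e_k + \vec{\delta}_k\|^2\bigr).
\]
Since $\dev(z_0,n)$ is defined as the $\delta\to 0^+$ rescaled limit of $\dev(z_0,n,\delta)$, for each fixed $n$ the quadratic remainder drops out, and it suffices to analyse the linear variational recursion.

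\textbf{Step 2: Variational formula and uniform estimates.} Telescoping the linear recursion gives $e_n = \sum_{k=0}^{n-1} D\reconstruct^{\,n-k}(\reconstruct^k z_0)\,\vec{\delta}_k$, and hence
\[
\dev(z_0,n) \;\leq\; \sum_{k=0}^{n-1} \bigl\|D\reconstruct^{\,n-k}(\reconstruct^k z_0)\bigr\|\;\prod_{j=0}^{k-1} c_j.
\]
I now invoke two uniform-in-$k$ bounds. First, by the multiplicative ergodic theorem (Proposition~\ref{prop:MET}) applied to the Jacobian cocycle generated by $D\reconstruct$, together with the Pesin tempering argument reviewed in Section~\ref{sec:cocyc:pesin}, for $\bar\mu$-a.e. $z_0$ and every $\epsilon>0$ there is a measurable $K_\epsilon$ with $\|D\reconstruct^m(z)\| \leq K_\epsilon(z)\,e^{(\lambda_1+\epsilon)m}$ and $\tfrac{1}{k}\ln K_\epsilon(\reconstruct^k z_0) \to 0$ along the forward orbit. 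Second, the hypothesis on $(c_j)$ yields $\prod_{j=0}^{k-1} c_j \leq e^{k(C+\epsilon)}$ for all large $k$. Substituting these, each summand is majorised by $e^{(n-k)(\lambda_1+\epsilon) + k(C+\epsilon) + o(k)}$.

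\textbf{Step 3: Assembling the rate.} Bounding $(n-k)\lambda_1\leq n\lambda_1$ and $kC\leq nC$ (valid whenever $\lambda_1, C\geq 0$, which is the relevant regime for the stability-gap applications in Section~\ref{sec:stability}) makes every summand at most $e^{n(\lambda_1+C+2\epsilon)+o(n)}$; summing $n$ such terms only contributes a sub-exponential prefactor. Taking $\tfrac{1}{n}\ln$, letting $n\to\infty$, and then $\epsilon\to 0^+$, yields $\limsup_{n\to\infty}\tfrac{1}{n}\ln\dev(z_0,n)\leq \lambda_1+C$.

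\textbf{Main obstacle.} The heart of the difficulty is the non-uniformity of the Jacobian growth in $k$: Oseledets provides an exponential rate only at $\bar\mu$-a.e. starting point, whereas the variational sum requires control along every orbit point $\reconstruct^k z_0$, with a prefactor that does not destroy the exponential accounting. The Pesin/tempering machinery is precisely what supplies this. A secondary technical point is the interchange of the $\delta\to 0^+$ limit with the sup over admissible pseudo-trajectories: although $\|\vec{\delta}_k\|$ can itself grow exponentially when $C>0$, for each fixed horizon $n$ one may shrink $\delta$ enough so that the Taylor remainder becomes $o(\delta)$ uniformly over all admissible perturbations, which justifies passing from the nonlinear recursion to the linear one.
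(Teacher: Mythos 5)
Your proof is correct and gives a genuinely more elementary route than the one the paper gestures at. The paper declares the proposition ``a direct consequence of the local stable/unstable manifold theorem'' and omits the argument; you instead linearise the nonlinear recursion along the reference orbit, telescope to the variational sum $e_n=\sum_{k=0}^{n-1}D\reconstruct^{\,n-k}(\reconstruct^k z_0)\vec\delta_k$, and then feed in the tempered Oseledets bound $\|D\reconstruct^m(\reconstruct^k z_0)\|\le K_\epsilon(\reconstruct^k z_0)e^{(\lambda_1+\epsilon)m}$ with $\tfrac{1}{k}\ln K_\epsilon(\reconstruct^k z_0)\to 0$. That is a self-contained Oseledets/Pesin-tempering argument, and it avoids invoking the full local invariant-manifold machinery, which is overkill here since only a one-sided growth bound is needed. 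Your handling of the $\delta\to 0^+$ limit for fixed $n$ to kill the Taylor remainder is also sound.

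One remark worth surfacing rather than leaving implicit: your intermediate bound actually yields the stronger estimate
\[
\limsup_{n\to\infty}\tfrac{1}{n}\ln\dev(z_0,n)\;\le\;\max(\lambda_1,C),
\]
since the sum $\sum_{k=0}^{n-1}e^{(n-k)(\lambda_1+\epsilon)+k(C+\epsilon)+o(n)}$ is dominated by either the $k=0$ term or the $k=n-1$ term. You then deliberately weaken this to $\lambda_1+C$ via $(n-k)\lambda_1\le n\lambda_1$ and $kC\le nC$, which you correctly observe requires $\lambda_1\ge 0$ and $C\ge 0$. This is not merely a convenience in your write-up: when $\min(\lambda_1,C)<0$ the claimed bound $\lambda_1+C$ is genuinely false. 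For instance, take $\reconstruct(z)=z/2$ on $\real$ with $\bar\mu=\delta_0$ (so $\lambda_1=-\ln 2$) and $c_j\equiv 4$ (so $C=2\ln 2$); choosing $\vec\delta_k$ at the maximal allowed magnitude gives $z_n\asymp 4^n\delta$, so the true rate is $C=2\ln 2$, exceeding $\lambda_1+C=\ln 2$. So the sign restriction you flag is an honest hypothesis that the proposition (and hence Theorem~\ref{thm:lambda1}(iii), where it is used) needs; in the paper's applications one always has $c_j>1$ hence $C\ge 0$, and the interesting case has $\lambda_1(f)\ge 0$, but stating $\max(\lambda_1,C)$ — or explicitly restricting to $\lambda_1,C\ge 0$ — would make the proposition both correct and sharper.
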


The proof is a direct consequence of the local stable / unstable manifold theorem \citep[e.g.][Sec 6]{Ruelle_Lyapu_1979} and will be omitted.
Let $c:X\to \real^+$ be a continuous function. Now define similarly to \eqref{eqn:def:pseudo},
\begin{equation}\begin{split} \label{eqn:def:pseudoII}
\mathcal{S} \left( z_0, c \right) &:= \SetDef{ \left(z_j\right)_{j=0}^\infty }{ z_{n+1} = \reconstruct(z'_n), \, d(z'_{n+1}, z_{n+1}) \leq c(z_n) d(z'_n, z_n), \, \forall n\in\num }. \\
\mathcal{S} \left( \delta; z_0, c \right) &:= \SetDef{ \left(z_j\right)_{j=0}^\infty \in \mathcal{S} \left( z_0, c \right) }{ d(z'_0, z_0) \leq \delta }.\\
\dev(n ,\delta; z_0, c) &:= \sup \SetDef{ d\left( z_n, \reconstruct^n(z_0) \right) }{ \left( z_j \right)_{j=0}^{\infty} \in \mathcal{S} \left( \delta; z_0, c \right) } .
\end{split}\end{equation}

\begin{proposition}[$\delta$-pseudo trajectory II] \label{prop:aedf2}
Let $\reconstruct : M\to M$ be a $C^1$ map on a manifold $M$, with an ergodic, non-uniformly hyperbolic invariant measure $\bar{\mu}$ with compact support $X$, and $c: M\to \real^+$ is a continuous map. Assume the notation in \eqref{eqn:def:pseudoII}. Then
\[ \limsup_{n\to\infty} \frac{1}{n} \limsup_{\delta\to 0^+} \ln \frac{1}{\delta} \dev( n, \delta; z_0, c) \leq \lambda_1(f) + \int \ln c d\bar\mu . \]
\end{proposition}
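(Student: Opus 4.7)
The plan is to reduce Proposition~\ref{prop:aedf2} to Proposition~\ref{prop:srg7} by evaluating the state-dependent multiplier $c$ along the reference orbit and invoking Birkhoff's ergodic theorem. First, I fix $z_0$ in the full-measure set on which both the conclusion of Proposition~\ref{prop:srg7} is available and Birkhoff's theorem applies to the observable $\ln c$ (well-defined and bounded on the compact support $X$ of $\bar\mu$, since $c$ is continuous and strictly positive). For this $z_0$, the sequence of constants $c_j := c(\reconstruct^j z_0)$ satisfies
\[ \lim_{N\to\infty} \frac{1}{N} \sum_{j=0}^{N-1} \ln c_j \;=\; \int \ln c\, d\bar\mu \;=:\; C, \]
which is exactly the hypothesis of Proposition~\ref{prop:srg7} for the sequence $(c_j)_{j=0}^\infty$.

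Next, I would compare, on any finite horizon, the state-dependent pseudo-trajectories $\mathcal{S}(\delta; z_0, c)$ with the constant-multiplier pseudo-trajectories $\mathcal{S}(\delta; z_0, (c_j)_j)$. Fix $n\in\num$ and $\epsilon>0$. By uniform continuity of $c$ on $X$ there is $\eta = \eta(\epsilon)>0$ such that $c(z) \leq (1+\epsilon)\,c(\reconstruct^j z_0)$ whenever $d(z, \reconstruct^j z_0) < \eta$. A finite-horizon Gr\"onwall estimate, using the Lipschitz constant of $\reconstruct$ on a compact neighborhood of $\{\reconstruct^j z_0\}_{j=0}^n$ together with the uniform bound $c_{\max} := \sup_X c < \infty$, yields a constant $K(n)$ such that every $(z_j) \in \mathcal{S}(\delta; z_0, c)$ obeys $d(z_j, \reconstruct^j z_0) \leq K(n)\,\delta$ for $0\leq j\leq n$. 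Hence if $\delta < \eta/K(n)$, then $c(z_j) \leq (1+\epsilon)\,c_j$ for each such $j$, which places the truncated pseudo-trajectory $(z_0,\ldots,z_n)$ inside the set of pseudo-trajectories generated by the constant sequence $\bigl((1+\epsilon)c_j\bigr)$. Denoting by $\widetilde{\dev}$ the $\dev$ functional from Proposition~\ref{prop:srg7} built with this sequence, we obtain
\[ \dev\bigl(n,\delta;z_0,c\bigr) \;\leq\; \widetilde{\dev}\bigl(z_0, n, \delta\bigr), \qquad \delta < \eta/K(n). \]

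Dividing by $\delta$ and taking $\limsup_{\delta\to 0^+}$ at fixed $n$, then taking $\ln$, dividing by $n$, and passing to $\limsup_{n\to\infty}$, Proposition~\ref{prop:srg7} applied to $\bigl((1+\epsilon)c_j\bigr)$ (whose log-average equals $C + \ln(1+\epsilon)$) yields
\[ \limsup_{n\to\infty}\frac{1}{n}\limsup_{\delta\to 0^+}\ln\frac{1}{\delta}\dev(n,\delta;z_0,c) \;\leq\; \lambda_1(\reconstruct) + C + \ln(1+\epsilon). \]
Letting $\epsilon\to 0^+$ and recalling that $C = \int \ln c\, d\bar\mu$ yields the claim.

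The main obstacle is the ordering of the $\delta\to 0^+$ and $n\to\infty$ limits. Since the Gr\"onwall constant $K(n)$ can grow exponentially in $n$, no single $\delta$ makes the approximation $c(z_j)\approx c_j$ valid for all $j$ simultaneously; this forces the choice of $\delta$ to be made \emph{after} $n$ in the finite-horizon comparison. The statement of Proposition~\ref{prop:aedf2} accommodates this by placing $\limsup_{\delta\to 0^+}$ inside $\limsup_{n\to\infty}$, which is precisely what allows the reduction above. The remainder of the argument is bookkeeping around continuity of $c$ and the Oseledets/Pesin machinery already packaged in Proposition~\ref{prop:srg7}.
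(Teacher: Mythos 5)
Your proposal is correct and follows essentially the same route as the paper: the paper's own (very terse) proof sets $c_j:=c(z_j)$, invokes uniform continuity of $c$ to pass to the reference orbit as $\delta\to 0^+$, uses the Birkhoff ergodic theorem to identify the Ces\`aro log-average with $\int\ln c\,d\bar\mu$, and then applies Proposition~\ref{prop:srg7}. Your write-up simply fleshes out the finite-horizon comparison and the $(1+\epsilon)$-bookkeeping that the paper leaves implicit.
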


\begin{proof} Set $c_j := c( z_j )$. Then as $\delta\to 0$, the sum $\frac{1}{n} \sum_{j=0}^{n-1} \ln c_j$ converge by the uniform continuity of $c$ and the ergodic theorem. Thus Proposition~\ref{prop:srg7} applies.
\end{proof}

\section{Cocycles with random perturbations} \label{sec:cocyc:peturb}

This section investigates a type of dynamics that we shall call a \emph{perturbed random cocycle} in \eqref{eqn:ab_exact}. The results in this section are of independent interest, but also directly apply to our study of the growth of error under iterations of the reconstructed system \eqref{eqn:def:feedback_1}. We shall later use them in the proof to Theorem~\ref{thm:iterative}. A version of perturbed random cocycles were investigated by Barreira and Valls \cite{BarreiraValls2006, BarreiraValls2005} in the context of non-autonomous differential equations of the form
\[ \frac{d}{dt} v(t) = A(t) v(t) + f_{\text{perturb}} \left( v(t), t \right) ,  \]
in Euclidean space. The non-autonomous behavior is due to the dependent on the time parameter $t$. In our case, for a fixed initial state $\omega_0$ of the underlying dynamical system $(\Omega, f)$, the time dependence is  via the orbit of $\omega_0$. The authors assumed that the function $f_{\text{perturb}}$ decay at a polynomial rate wrt the norm of $v$, a property not applicable to our case. We shall study the problem in more generality.

Consider dynamics of the form
\begin{equation} \label{eqn:def:cocyc_perturb_n}
z_{n+1} = G_n z_n + d_{n+1} ,
\end{equation}
where the $z_n$s and $d_n$s are $m$-vectors, and $G_n$s are invertible $m\times m$ matrices. $z_n$ represents a state vector. The $G_n, d_n$ form a sequence of random matrices and perturbation vectors. If we assume that the source of this randomness is the dynamical system $(\Omega, \mu, f)$, then the iterates of \eqref{eqn:def:cocyc_perturb_n} can be realized as iterates of the skew product map
\begin{equation} \label{eqn:def:cocyc_perturb}
F : \Omega \times \real^M \to \Omega \times \real^M, \quad F : \left(\begin{array}{c} \omega \\ z \end{array} \right) \mapsto \left(\begin{array}{c} f\omega \\ G(\omega) z + d(f\omega) \end{array} \right) ,
\end{equation}
where $d\in L^2\left( \Omega, \mu ;\real^M\right)$ can be interpreted as a random perturbation vector. We call the system \eqref{eqn:def:cocyc_perturb} a \emph{perturbed random matrix cocycle}. If we fix an initial state $(\omega_0, z_0)$ and setting
\[ z_n := \proj_2 F^n(\omega_0, z_0), \quad G_n := G(f^n\omega_0), \quad d_n := d(f^n \omega_0), \quad n\in\num_0 . \]
then we get \eqref{eqn:def:cocyc_perturb_n}.
Iterating \eqref{eqn:def:cocyc_perturb_n} $n$ times gives
\begin{equation} \label{eqn:cx38f}
z_{n} = \left[ G_{n-1} \cdots G_{0} \right] z_0 + d_n + \sum_{j=1}^{n-1} \left[ G_{n-1} \cdots G_{j} \right] d_j = \cocyc(n-1,0) z_0 + \sum_{j=1}^{n} \cocyc(n-1,j) d_j . 
\end{equation}
Equation \eqref{eqn:cx38f} can be rewritten in terms of \eqref{eqn:def:cocyc_perturb} as
\begin{equation}\label{eqn:Fn}
F^n \left( \omega, z \right) = \left( f^n \omega, \, \cocyc(n,\omega) z + \left(\GrowthT^n d \right)(\omega) \right) .
\end{equation}
where
\begin{equation} \label{eqn:def:GraphT}
\left(\GrowthT^n d \right)(\omega) := \sum_{j=1}^{n} \cocyc \left( n-j, f^{j} \omega \right) d\left( f^{j} \omega \right) \in \real^d ,
\end{equation}
for every $n\in \num$. We shall call this map $\GrowthT^n$ the \emph{graph-transform} operator. If one views the map $d$ as an $\real^d$-valued graph over $\Omega$, then $\GrowthT^n d$ is a new graph over $\Omega$. Also note that the transformation is linear in $d$, justifying the name of ``operator". Moreover,
\begin{equation} \label{eqn:kjbc93}
\GrowthT^0 u \equiv 0, \quad \GrowthT^1 u \equiv u, \quad \forall u : \Omega\to \real^d .
\end{equation}
Moreover, if the initial value $z_0=0$, then 
\[ z_n(\omega) = \proj_2 F^n(\omega, z_0\equiv 0) = (\GrowthT^n d)(\omega), \quad \forall n\in\num . \]
By \eqref{eqn:Fn}, the growth of $z_n$ depends on the initial value $z_0$ only through the action of $\cocyc(n,\omega)$, which is well tractable by the multiplicative ergodic theorem. We are mainly interested in the remaining part, i.e., the behavior of the operator $\GrowthT^n$. In the following theorem, we shall use $\lambda_i^+$ to denote $\max( \lambda_i, 0 )$.

\begin{theorem} \label{thm:GraphT_growth}
Let Assumption~\ref{A:f} hold and suppose $\cocyc$ is a $GL(m;\real)$-valued cocycle as in \eqref{eqn:ih7b}, \eqref{eqn:def:cocyc}; along with a perturbed matrix cocycle as in \eqref{eqn:def:cocyc_perturb}. Assume the notations of the associated Oseledet splitting as in Proposition~\ref{prop:MET}, so that the vector valued function $d$ has the splitting
\[ d = \oplus_{i=1}^{r} d^{(i)} , \quad d^{(i)}\in E^{(i)} . \]
Finally let $\GrowthT$ be as in \eqref{eqn:def:GraphT}. Then
\begin{enumerate}[(i)]
\item Suppose $d$ is essentially bounded. Then for $\mu$-a.e. $\omega\in \Omega$,
\begin{equation} \label{eqn:GraphT_growth_1}
\norm{\GrowthT^n d^{(i)}(\omega) } = \norm{d^{(i)}}_{L^\infty} \mult(\omega) \bigO{ e^{ (\lambda^+_i + \leak) n} }, \mbox{ as } n\to\infty, \quad \forall 1\leq i\leq r .
\end{equation}
where $\lambda_i^+ := \max(\lambda_i, 0)$.
\item If the system has $L^2$ Pesin sets, then
\begin{equation} \label{eqn:GraphT_growth_3}
\norm{\GrowthT^n d^{(i)}(\omega) }_{L^2(\mu)} = \norm{d^{(i)}}_{L^2(\mu)} \norm{\mult}_{L^2(\mu)}^2 \bigO{ e^{ (\lambda^+_i + \leak) n} }, \mbox{ as } n\to\infty, \quad \forall 1\leq i\leq r .
\end{equation}
%
%
\end{enumerate}
\end{theorem}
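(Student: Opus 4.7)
The plan is to reduce both claims to a single pointwise estimate on each summand of \eqref{eqn:def:GraphT}, after which an elementary geometric series completes the argument. Two structural facts drive the reduction. First, the invariance property \eqref{eqn:oixks4} of the Oseledet splitting under the cocycle ensures that $\cocyc(n-j, f^j\omega) d^{(i)}(f^j\omega)$ lies in $E_i(f^n\omega)$, so the Pesin fluctuation bound \eqref{eqn:NUH_fluct} restricted to the bundle $E_i$ applies directly. Second, the nesting $f(\Omega_k) \subseteq \Omega_{k+1}$ of Pesin sets yields the orbit bound $\mult(f^j\omega) \leq e^{j\leak} \mult(\omega)$, which is the only available way to transfer the nonuniform Pesin constant between times.

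Combining these inputs, \eqref{eqn:NUH_fluct} applied to $\cocyc(n-j, f^j\omega)$ on $E_i$ produces a termwise bound which, after the triangle inequality in \eqref{eqn:def:GraphT} and the substitution for $\mult(f^j\omega)$, takes the shape
\[ \norm{\GrowthT^n d^{(i)}(\omega)} \leq \mult(\omega) e^{(\lambda_i + \leak) n} \sum_{j=1}^{n} e^{-\lambda_i j} \norm{d^{(i)}(f^j\omega)}. \]
For part (i), pulling out $\norm{d^{(i)}}_{L^\infty}$ and resolving the remaining geometric sum in three cases (when $\lambda_i > 0$ the sum is uniformly bounded, giving rate $e^{(\lambda_i+\leak)n}$; when $\lambda_i < 0$ the sum grows like $e^{-\lambda_i n}$ and exactly cancels the prefactor, leaving $e^{\leak n}$; when $\lambda_i = 0$ the sum is linear in $n$ and is absorbed by enlarging $\leak$ infinitesimally) uniformly gives the bound $\bigO{e^{(\lambda_i^+ + \leak)n}}$ claimed in \eqref{eqn:GraphT_growth_1}.

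For part (ii), Minkowski's inequality in $L^2(\mu)$ takes the $L^2$ norm inside the sum over $j$, the $f$-invariance of $\mu$ collapses each $\norm{\mult(f^j\cdot)\, d^{(i)}(f^j\cdot)}_{L^2(\mu)}$ to the fixed-time quantity $\norm{\mult \cdot d^{(i)}}_{L^2(\mu)}$, and Cauchy--Schwarz separates the $\mult$ and $d^{(i)}$ factors. The $L^2$ Pesin hypothesis is exactly what ensures this splitting produces a finite $L^2(\mu)$ norm of $\mult$. Summing the remaining series in $j$ by the same three-case argument as in part (i) yields \eqref{eqn:GraphT_growth_3}.

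The main obstacle is controlling the compounding $e^{j\leak}$ factor coming from the orbit growth of $\mult$ without degrading the exponential rate. The delicate case is $\lambda_i \leq 0$: the cocycle contracts along $E_i$, but perturbations accumulate at every time step, and the proof must verify that after cancellation the growth remains $\bigO{e^{(\lambda_i^+ + \leak)n}}$ and not something strictly worse. All the arithmetic slack comes from the freedom in $\leak$, which is why enlarging $\leak$ slightly (or absorbing a multiplicative polynomial factor into $e^{\leak n}$) is repeatedly needed in the zero-exponent and $L^2$ cases.
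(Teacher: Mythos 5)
Your proof takes a genuinely different route through Part~(i) and it is arguably cleaner than the paper's. The paper factors $\GrowthT^n d^{(i)}(\omega) = \cocyc(n,\omega)\sum_{j}\cocyc(j,\omega)^{-1}d^{(i)}(f^j\omega)$ and applies the Pesin bound \eqref{eqn:NUH_fluct} twice at the base point $\omega$ --- once for the inverse cocycle, once for the forward cocycle --- accumulating $\mult(\omega)^2$ in the $\lambda_i>0$ case. You instead bound $\cocyc(n-j, f^j\omega)\rvert_{E_i(f^j\omega)}$ directly at the shifted point and then transfer $\mult(f^j\omega)$ back to $\mult(\omega)$ via the tempered orbit growth $\mult(f^j\omega)\leq e^{j\leak}\mult(\omega)$. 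This uniformizes the argument across signs of $\lambda_i$ (the paper in fact already uses your approach for $\lambda_i<0$, via $e^{\leak(k+j)}$ on the Pesin set $\Omega_k$) and yields only a single power of $\mult(\omega)$, which actually matches the statement of \eqref{eqn:GraphT_growth_1} better than the paper's own derivation does. Your resulting series $\sum_j e^{-j\lambda_i}$ and the three-case analysis (geometric convergence for $\lambda_i>0$, exact cancellation leaving $e^{\leak n}$ for $\lambda_i<0$, a polynomial factor absorbed into $\leak$ for $\lambda_i=0$) are sound.

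For Part~(ii), be aware of a norm mismatch that you should not gloss over. After Minkowski and $f$-invariance you are left with $\norm{\mult\,d^{(i)}}_{L^2(\mu)}$ (taking the termwise bound with $\mult(f^j\omega)$ kept, which is the version that actually lets invariance collapse the shift). Cauchy--Schwarz on the $L^2$ norm of a product gives $\norm{\mult\,d^{(i)}}_{L^2}\leq\norm{\mult}_{L^4}\norm{d^{(i)}}_{L^4}$, not the $L^2$ norms appearing in \eqref{eqn:GraphT_growth_3}; $\norm{\mult}_{L^2}\norm{d^{(i)}}_{L^2}$ only bounds the $L^1$ norm. You should note that the paper's own proof of Part~(ii) integrates the pointwise bound, which yields an $L^1(\mu)$ estimate rather than the stated $L^2(\mu)$, so the discrepancy originates in the paper's statement rather than in your argument; but your claim that ``Cauchy--Schwarz separates the factors'' at the $L^2$ level is not correct as written and needs either an $L^4$ Pesin hypothesis, an $L^\infty$ bound on $d$, or a downgrade of the conclusion to $L^1$.
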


\begin{proof} 
To gain more insight into the growth of $z_n$, we use \eqref{eqn:def:cocyc} to get
\[\begin{split}
\left(\GrowthT^{n} d \right)(\omega) &= \sum_{j=1}^{n} \cocyc \left( n-j, f^{j} \omega \right) d\left( f^{j} \omega \right) = \sum_{j=1}^{n} \cocyc \left( n-j, f^j \omega \right) \cocyc \left( j, \omega \right) \cocyc \left( j, \omega \right)^{-1} d\left( f^j \omega \right) \\
&= \cocyc \left( n, \omega \right) \sum_{j=1}^{n} \cocyc \left( j, \omega \right)^{-1} d\left( f^j \omega \right) .
\end{split}\]
We have thus related the error in the prediction to the growth of the vector $\cocyc \left( n, \omega \right)$. The growth of the matrix $\cocyc \left( n, \omega \right)$ with $n$ can be estimated using Proposition~\ref{prop:MET}. Broadly speaking, the different Oseledet subspaces grow approximately at rate $e^{\lambda_i n}$ under the action of $\cocyc \left( n, \omega \right)$. We now estimate the growth of the components of the summand along the Oseledet splitting. Define 
\[ e_j(\omega) := \cocyc \left( j, \omega \right)^{-1} d\left( f^{j} \omega \right), \quad e_j^{(i)} \left( \omega \right) := \cocyc \left( j, \omega \right)^{-1} d^{(i)}\left( f^{j} \omega \right). \] 
Then we have
\begin{equation} \label{eqn:srn8}
\cocyc \left( n-j, f^{j} \omega \right) d\left( f^{j} \omega \right) = \cocyc \left( n, \omega \right) e_j(\omega), \quad \cocyc \left( n-j, f^{j} \omega \right) d^{(i)}\left( f^{j} \omega \right) = \cocyc \left( n, \omega \right) e^{(i)}_j(\omega) .
\end{equation}
The analysis of the growth of this term will depend on the sign of $\lambda_i$. 

\paragraph{Case : $\lambda_i>0$} Then
\[ \norm{ e_j^{(i)}(\omega) } = \norm{ \cocyc(j, \omega)^{-1} d^{(i)} (f^j \omega) } \leq \norm{ \cocyc(j, \omega)^{-1} \vert_{ E^{(i)}(f^j \omega)} } \norm{ d^{(i)} (f^j \omega) } \leq \mult(\omega) e^{-j(\lambda_i-\epsilon)} \norm{ d^{(i)} (f^j \omega) } \]
Therefore by \eqref{eqn:NUH_fluct} and \eqref{eqn:srn8},
\[\begin{split}
\norm{\cocyc \left( n-j, f^{j} \omega \right) d^{(i)}\left( f^{j} \omega \right)} & = \norm{ \cocyc \left( n, \omega \right) e^{(i)}_j(\omega) } \leq \norm{\cocyc \left( n, \omega \right) \vert_{ E^{(i)}(\omega) } } \norm{ e^{(i)}_j(\omega) } \\
& \leq \mult^2(\omega) e^{n(\lambda_i+\epsilon)} e^{-j(\lambda_i-\epsilon)} \norm{ d^{(i)} \left( f^j \omega \right) } . 
\end{split}\]
Thus $\lambda_i>0$ implies
\begin{equation} \label{eqn:ind33p}
\norm{ \left( \GrowthT^n d^{(i)} \right) (\omega) } \leq \sum_{j=1}^{n} \norm{\cocyc \left( n-j, f^{j} \omega \right) d^{(i)}\left( f^{j} \omega \right)} \leq \mult(\omega)^2 e^{n(\lambda_i+\epsilon)} \sum_{j=1}^{n} e^{-j(\lambda_i-\epsilon)} \norm{ d^{(i)} \left( f^j \omega \right) } .
\end{equation}
At this point the following identity is relevant to us :
\begin{equation} \label{eqn:scjn3}
e^{n(\lambda_i+\epsilon)} \sum_{j=1}^{n} e^{-j(\lambda_i-\epsilon)} = e^{-(\lambda_i-\epsilon)} e^{n(\lambda_i+\epsilon)} \frac{ 1- e^{-n(\lambda_i-\epsilon)} }{ 1- e^{-(\lambda_i-\epsilon)} } = \frac{e^{-(\lambda_i-\epsilon)}}{ 1- e^{-(\lambda_i-\epsilon)} } \left[ e^{n(\lambda_i+\epsilon)}  - e^{2\epsilon} \right] = c_i \bigO{ e^{n(\lambda_i+\epsilon)} },
\end{equation}
for some constant $c_i>0$. Suppose that $d$ is essentially bounded. Then \eqref{eqn:ind33p} and \eqref{eqn:scjn3} gives
\[ \lambda_i>0 \imply  \norm{ \left( \GrowthT^n d^{(i)} \right) (\omega) } \leq c_i \norm{ d^{(i)} }_{L^\infty} \bigO{ e^{n(\lambda_i+\epsilon)} } . \]
This proves Claim~(i) for the case $\lambda_i>0$. Now suppose that the map has $L^2$ Pesin sets. Integrating both sides of  \eqref{eqn:ind33p} with respect to $\omega$ and then summing according to \eqref{eqn:scjn3} gives
\begin{equation} \label{eqn:njn48}
\norm{\GrowthT^n d^{(i)}}_{L^1(\mu)} = \int \norm{ \left( \GrowthT^n d^{(i)} \right) (\omega) } d\mu(\omega) \leq C^2 \norm{ d^{(i)}}_{L^1(\mu)} \bigO{e^{n(\lambda_i+\epsilon)}} .
\end{equation}

\paragraph{Case : $\lambda_i<0$} Suppose that $\omega\in \Omega_k$. In this case note that  for each $1\leq j\leq n$,
\[ \norm{ \cocyc\left( n-j, f^j \omega \right) d^{(i)}\left( f^j \omega \right) } \leq \norm{ \cocyc\left( n-j, f^j \omega \right) \vert_{ E^{(i)}(f^j \omega) } } \norm{ d^{(i)}\left( f^j \omega \right) } \leq e^{ (n-j)(\lambda_i+\leak)} e^{\leak (k+j)} \norm{ d^{(i)}\left( f^j \omega \right) } .\]
Summing over $j$ gives
\begin{equation} \label{eqn:pe93} \begin{split}
\lambda_i<0 \imply \norm{ \left( \GrowthT^n d^{(i)} \right) (\omega) } \leq e^{k\leak} e^{n(\lambda_i+\leak)} \sum_{j=1}^{n} e^{ j \lambda_1 } \norm{ d^{(i)}\left( f^j \omega \right) }
\end{split}\end{equation}
The rest of the `analysis is similar to the previous analysis, now made more simpler by the fact that $\lambda_i<0$ and the RHS above is bounded uniformly with respect to $n$.
This completes the proof of theorem.
\end{proof}

\section{Proof of Proposition~\ref{thm:kncd3l}} \label{sec:proof:kncd3l}

Proposition~\ref{thm:kncd3l} is a direct consequence of a result of J. Stark, which we state below.

\paragraph{Skew-product systems} Let $\tilde{\Omega}, Y$ be smooth manifolds and $T:(x,y) \mapsto (fx,g(x,y))$ be a skew-product map on $\tilde{\Omega}\times Y$. For every $n\in\num$, let $g^{(n)}: \tilde{\Omega}\times Y\to Y$ be the map such that
\[ T^n(x,y) = \left( f^n x, g^{(n)}(x,y) \right), \quad \forall (x,y) \in \tilde{\Omega}\times Y . \]

\begin{lemma}[Invariant graphs for skew-product systems] \label{lem:Stark}
\citep[][Thm 1.3]{Stark1999} Assume the notations above, and suppose that the following hold :
\begin{enumerate}[(i)]
    \item $f$ is a $C^{1+\alpha}$ diffeomorphism and thre are constants $\mu\geq 0, C_2>0$ such that $\| Df^{-n} \| \leq C_2 e^{\mu n}$.
    \item There is a closed and $f$-invariant subset $\Omega\subseteq \tilde{\Omega}$.
    \item There exist constants $\lambda, C_3>0$ such that
    \begin{equation} \label{eqn:pd83}
    \Lip\left( g^{(n)}(x,\cdot) \right) \leq C_3\exp(-\lambda n), \quad \forall x\in \tilde{\Omega} .
    \end{equation}
    \item $g$ is uniformly $C^{1+\alpha}$ on compact sets.
\end{enumerate}
Then there is a continuous map  $\Phi:\Omega\to Y$ such that the graph of $\Phi$ is invariant and globally attracting under $T$. Moreover, for every $\gamma\in (0,\alpha]$ such that $\mu(1+\gamma)<\lambda$, $\Phi$ is $C^{1+\gamma}$ in the Whitney sense.
\end{lemma}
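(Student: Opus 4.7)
The plan is to build $\Phi$ by a graph-transform / fixed-point argument in the backward direction, then bootstrap regularity using the gap condition $\mu(1+\gamma)<\lambda$. The whole point is that hypothesis (iii) says the fibre maps are exponentially contracting under iteration, so pulling back along $f^{-n}$ must collapse any initial choice of $y_0$ to a single point depending only on $x$.

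First I would construct $\Phi$ pointwise. Fix any $y_0\in Y$ and, for each $x\in \Omega$ and $n\in \mathbb{N}$, define
\[ \Phi_n(x) := g^{(n)}\bigl(f^{-n}x, \, y_0\bigr). \]
Using the cocycle identity $g^{(n+1)}(f^{-n-1}x, y_0) = g^{(n)}(f^{-n}x, \, g(f^{-n-1}x, y_0))$ together with hypothesis (iii), I get
\[ d\bigl(\Phi_{n+1}(x),\Phi_n(x)\bigr) \;\le\; C_3 e^{-\lambda n}\, d\bigl(g(f^{-n-1}x,y_0),\, y_0\bigr). \]
On compact invariant pieces (use hypothesis (ii) plus local compactness and, if needed, a suitable preliminary trapping region coming from (iii) with $n=1$), the right-hand side is summable. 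Hence $\Phi_n(x)$ is Cauchy and $\Phi(x):=\lim_n \Phi_n(x)$ is well defined. Continuity of $\Phi$ follows because the convergence is locally uniform in $x$: the rate $C_3 e^{-\lambda n}$ is $x$-independent and $f$, $g$ are continuous. Invariance of the graph is the identity
\[ \Phi(fx)=\lim_n g^{(n)}(f^{-n+1}x,y_0)=g\bigl(x,\lim_n g^{(n-1)}(f^{-n+1}x,y_0)\bigr)=g(x,\Phi(x)), \]
which uses continuity of $g(x,\cdot)$. Global attraction is immediate from (iii): for any $(x,y)$,
\[ d\bigl(\mathrm{proj}_Y T^n(x,y),\,\Phi(f^n x)\bigr)= d\bigl(g^{(n)}(x,y),\,g^{(n)}(x,\Phi(x))\bigr)\le C_3 e^{-\lambda n}d(y,\Phi(x))\to 0. \]

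Next I would establish the $C^{1+\gamma}$ regularity (in the Whitney sense on $\Omega$). Formally differentiating $\Phi(fx)=g(x,\Phi(x))$ yields the linear equation
\[ D\Phi(fx)\,Df(x) \;=\; \partial_x g(x,\Phi(x)) + \partial_y g(x,\Phi(x))\, D\Phi(x), \]
i.e.\ a cohomological/graph-transform equation for the candidate derivative $D\Phi$ along the orbit. I would construct $D\Phi$ as the unique fixed point of this operator acting on the Banach space of bounded $\gamma$-Hölder sections $\Omega\ni x\mapsto L(x)\in \mathrm{Hom}(T_x\tilde\Omega,T_{\Phi(x)}Y)$, equipped with a weighted sup-norm. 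The contraction factor of this operator is controlled by $\|\partial_y g\|\cdot \|Df^{-1}\|^{1+\gamma}$, which on orbits grows no faster than $e^{(\mu(1+\gamma)-\lambda)n}$; hypothesis $\mu(1+\gamma)<\lambda$ makes this a strict contraction, yielding a Hölder candidate $L(x)$. Verifying that this $L$ is actually the Whitney derivative of $\Phi$ is then a matter of telescoping: write $\Phi(x')-\Phi(x)-L(x)(x'-x)$ as a sum indexed by how far back along orbits one tracks the two points, and use the exponential contraction in fibres against the at-most-exponential expansion in the base to bound this by $o(|x'-x|)$, with rate $|x'-x|^{1+\gamma}$ given the spectral gap. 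Uniform $C^{1+\alpha}$-regularity of $g$ on compact sets (hypothesis (iv)) is used at each step to compare the linearizations of $g$ at nearby base points.

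The main obstacle is the regularity part, and specifically the verification that the formally-constructed Hölder section $L$ really is a (Whitney) derivative rather than merely a solution of the formal cohomological equation. The existence of the fixed point is standard once the weighted norm is set up, but the control of the remainder $\Phi(x')-\Phi(x)-L(x)(x'-x)$ on $\Omega$ (which need not be a manifold) requires care: one has to iterate the invariance identity, extract the leading linear term, and absorb the nonlinear error using both the $C^{1+\alpha}$-modulus of $g$ and the spectral gap $\mu(1+\gamma)<\lambda$. Existence, continuity, invariance, and attraction of $\Phi$ are the easy parts and essentially follow from a uniform-Cauchy argument powered by (iii); the delicate regularity estimate is where (i), (iv), and the sharp gap condition all must be used simultaneously.
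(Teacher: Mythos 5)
You should note first that the paper itself does not prove Lemma~\ref{lem:Stark}: it is imported verbatim as Theorem~1.3 of \cite{Stark1999}, and the paper's Section~\ref{sec:proof:kncd3l} only verifies its hypotheses for the reservoir setting. So your proposal is not competing with an in-paper argument but with Stark's original proof, and in fact it reconstructs essentially that proof's architecture: existence of $\Phi$ by pulling back along $f^{-n}$ and using the fibre contraction \eqref{eqn:pd83} to get a uniformly Cauchy sequence $\Phi_n(x)=g^{(n)}(f^{-n}x,y_0)$; invariance and global attraction as immediate consequences of the same estimate; and regularity from the spectral-gap condition $\mu(1+\gamma)<\lambda$. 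Your existence/invariance/attraction part is correct and complete modulo one small point you only gesture at: the Cauchy estimate needs $d\bigl(g(f^{-n-1}x,y_0),y_0\bigr)$ bounded along backward orbits, which is automatic when $\Omega$ is compact (as in the paper's application, where $\Omega=\operatorname{supp}\mu$) but deserves to be said explicitly since the lemma only states that $\Omega$ is closed and invariant.

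The regularity part is where your proposal remains a plan rather than a proof. Your bookkeeping is right: the induced operator on candidate derivative sections contracts the $\gamma$-H\"older norm with factor controlled by $\|\partial_y g\|\,\|Df^{-1}\|^{1+\gamma}\lesssim e^{(\mu(1+\gamma)-\lambda)}$ per step, so a fixed point $L$ exists; and you correctly identify that the real work is showing $L$ is a Whitney derivative of $\Phi$ on the (possibly non-manifold) set $\Omega$, via a telescoped remainder estimate using hypotheses (i) and (iv). Stark's own route packages exactly this step into the Hirsch--Pugh--Shub $C^r$ section theorem for fibre contractions, which is why the exponent condition $\mu(1+\gamma)<\lambda$ appears in precisely the form you derived; if you want to avoid redoing the Whitney estimate by hand, citing that section theorem (or \cite{Stark1999} itself, as the paper does) is the efficient way to close your sketch. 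As written, your proposal is a faithful and correct outline of the standard argument, with the $C^{1+\gamma}$ step acknowledged but not executed.
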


Note that for every $n\in\num$, $x\in \tilde{\Omega}, y\in Y$,
\[ g^{(1)} = g, \quad g^{(n+1)}(x,y) = g\left( f^n (\omega), g^{(n)}(x,y) \right). 
\]
For our purposes, set $\tilde{\Omega}=\Omega$ and $Y=\real^L$, and let $f,g$ be smooth maps satisfying Assumptions \ref{A:f} and \ref{A:pPhi} and \eqref{eqn:cv93m}. Then clearly all the conditions of Lemma~\ref{lem:Stark} are satisfied. Thus there is a smooth map $\Phi:\Phi\to \real^L$ whose graph is invariant under $T$. This completes the proof of theorem. \qed

\section{Proof of Theorem~\ref{thm:lambda1}} \label{sec:proof:lambda1}

\subsection{Proof of Claims (i), (ii) }

Claim~(i) was proved by Dechert and Gencay. We restate their result using our terminology. Alhough they prove their result in the context of delay coordinate maps, their proof is based on a commutation identity \citep[see][eqn 3.4]{DechertGencay1996} which also holds in our more general case.

\begin{lemma} \label{lem:Lyap_sub} \citep[][Thm 3.1]{DechertGencay1996} 
Let $M, N$ be $C^1$ manifolds of dimension $m, n$ respectively. Let $f:M\to M$ and $g:N\to N$ be two $C^1$ diffeomorphisms, conjugate via a $C^1$ map $J:M\to N$ as $g\circ J = J \circ f$. Let $\mu$ be an invariant ergodic measure $\mu$ of $f$. Let $\lambda_1(f,\mu) > \cdots >\lambda_r(f,\mu)$ be the distinct Lyapunov exponents of the ergodic system $(f,\mu)$. Let $E_1\oplus \cdots \oplus E_r$ be the corresponding Oseledet splitting.
\begin{enumerate} [(i)]
    \item For every $1\leq j \leq r$, $\lambda_j = \lambda_j(f,\mu)$ is also a Lyapunov exponent of the ergodic system $(g, J_*\mu)$. The Oseledet subspace of $TN$ corresponding to $\lambda_j$ contains the subspace $DJ(E_j)$.
    \item In particular, the Lyapunov exponents of $g$ contains as a subset the Lyapunov exponents of $f$.
\end{enumerate}
\end{lemma}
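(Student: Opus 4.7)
The plan is to exploit the $C^1$ conjugacy $g \circ J = J \circ f$ by differentiating it. First, differentiating at a point $x \in M$ and then applying the chain rule inductively yields, for every $n \in \num$,
\[
Dg^n(J(x)) \circ DJ(x) \;=\; DJ(f^n x) \circ Df^n(x).
\]
This intertwining identity is the backbone of the argument: it transports the infinitesimal behavior of $Df^n$ along $E_j$ over to the infinitesimal behavior of $Dg^n$ along $DJ(x)(E_j)$, so the Lyapunov spectrum of one can be read off the spectrum of the other along these transported subspaces.

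Next, I would fix $v \in E_j(x)\setminus\ker DJ(x)$ and compute the Lyapunov exponent of $DJ(x)v$ under $Dg^n$ via a sandwich. For the upper bound, the intertwining identity gives
\[
\|Dg^n(J(x))\,DJ(x)v\| \;=\; \|DJ(f^n x)\,Df^n(x)v\| \;\leq\; \|DJ(f^n x)\| \cdot \|Df^n(x)v\|.
\]
Because $\mu$ has compact support $\Omega$ and $DJ$ is continuous, $\log^+\|DJ\|$ is bounded on $\Omega$, so Birkhoff's theorem forces $\tfrac{1}{n}\log\|DJ(f^n x)\| \to 0$ for $\mu$-a.e. $x$. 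Combined with the Oseledets asymptotic $\tfrac{1}{n}\log\|Df^n(x)v\| \to \lambda_j$, this produces an upper bound of $\lambda_j$.

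For the matching lower bound, I would use equivariance of the splitting: $Df^n(x)v \in E_j(f^n x)$. The quantity to control from below is the restriction $DJ(y)|_{E_j(y)}$. Under the paper's standing embedding assumption (Assumption~\ref{A:pPhi}(i)), $J = \phi \times \Phi$ is injective and $DJ$ has full rank on the compact set $\Omega$, so $y \mapsto \log \bigl\|(DJ(y)|_{E_j(y)})^{-1}\bigr\|$ is bounded on $\Omega$. A second application of Birkhoff then gives
\[
\frac{1}{n}\log\|DJ(f^n x)\,Df^n(x)v\| \;\geq\; \frac{1}{n}\log\|Df^n(x)v\| \;-\; \frac{1}{n}\log\bigl\|(DJ(f^n x)|_{E_j(f^n x)})^{-1}\bigr\| \;\longrightarrow\; \lambda_j.
\]
Together with the upper bound this pins the $g$-Lyapunov exponent of $DJ(x)v$ to exactly $\lambda_j$, and since $DJ(x)$ commutes with the Oseledets filtration through the intertwining identity, $DJ(E_j)$ lies inside the Oseledets subspace of $(g,J_*\mu)$ for $\lambda_j$. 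This is claim~(i); claim~(ii) is then just the inclusion of sets.

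The main obstacle is precisely the lower bound step, which genuinely requires $DJ$ to be non-degenerate on each $E_j$ (otherwise some vectors are annihilated and the corresponding exponent could be lost when passing to the $g$-side). In the original Dechert--Gencay formulation this non-degeneracy is absorbed into the delay-embedding hypothesis, while here it is furnished cleanly by the embedding property of $\Phi$ together with continuity of $DJ$ on $\Omega$; stating the lemma for a general $C^1$ conjugacy $J$ therefore carries an implicit rank hypothesis that must be flagged in the proof.
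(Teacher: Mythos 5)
Your intertwining identity $Dg^n(J(x)) \circ DJ(x) = DJ(f^n x) \circ Df^n(x)$ is precisely the commutation relation the paper attributes to Dechert--Gencay, and since the paper only cites their result rather than reproving it, your proposal supplies the missing argument in the same spirit. Your observation that the lemma, stated for a generic $C^1$ conjugating map $J$, carries an implicit rank hypothesis is correct and worth keeping: without nondegeneracy of $DJ$ the lower bound in your sandwich collapses and the conclusion can fail, and in the paper's application the hypothesis is supplied by the embedding $\phi\times\Phi$.

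Two points to tighten. First, you do not actually need Birkhoff in either bound. With $\Omega$ compact, $DJ$ continuous, and $DJ$ of full rank everywhere on $\Omega$, both $\|DJ\|$ and the smallest singular value of $DJ$ are uniformly bounded above and away from zero respectively, so $\tfrac{1}{n}\log\|DJ(f^n x)\|\to 0$ and $\tfrac{1}{n}\log\|(DJ(f^n x)|_{E_j(f^n x)})^{-1}\|\to 0$ hold deterministically; this also sidesteps any integrability bookkeeping for a quantity that depends on the merely measurable subbundle $E_j(\cdot)$. Second, and more substantively: your forward-time sandwich shows that $DJ(x)v$ has \emph{forward} Lyapunov exponent $\lambda_j$ under $Dg^n$, which suffices for claim (ii) and places $DJ(E_j)$ in the correct filtration level. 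But claim (i) asserts containment in the Oseledets \emph{splitting} subspace for $\lambda_j$, which for the invertible system $g$ is characterized by both the forward exponent being $\lambda_j$ and the backward exponent being $-\lambda_j$. Since $f,g$ are diffeomorphisms, $g\circ J = J\circ f$ gives $g^{-1}\circ J = J\circ f^{-1}$, and running the identical sandwich in backward time supplies the missing half. With that addendum the argument is complete.
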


In our case, the conjugation is via the map
\[ h:= (\phi, \Phi) : \Omega \to \real^{d+L}. \]
We next prove Claim~(ii) Under our assumption of ergodicity of $\mu$, the Lyapunov exponents are constant $\mu$-a.e. and coincide with their averages. The semi-continuity of averaged Lyapunov exponents is well known, both as functions of the map \citep[e.g.][Prop 2.2.]{BochiViana_conti_2005}; or as a function of a cocycle over a fixed base dynamics \citep[e.g.][Rem 1.4]{Viana_Lyap_2020}. 

\subsection{Proof of Claim~(iii)}

Fix a generic point $\omega_0\in\support(\mu)$ and set 
\[ z_0 := \left( \phi(\omega_0), \Phi(\omega_0) \right) = h(\omega_0) .\]
$z_0$ is a point in $X$. To determine the maximal Lyapunov exponent of $\reconstruct$, we have to determine the maximum rate of deviation of orbits under perturbations. By Lemma~\ref{lem:sjdfb4}, it is sufficient to consider the perturbation to occur either only in the first $d$ coordinates or last $L$ coordinates in the space $\real^{d+L}$. We call these $\phi$-perturbations and $\Phi$-perturbations respectively.

\paragraph{$\phi$-perturbations} First perturb $z_0$ to $z'_0 = \left( \phi(\omega_0) + \vec\delta, \Phi(\omega_0) \right)$ for some $\vec\delta\in\real^d$. Then
\[ \reconstruct(z'_0) = \reconstruct\left( \phi(\omega_0) + \delta, \Phi(\omega_0) \right) = \left( w\circ\Phi(\omega_0), g\left( \phi(\omega_0) + \delta, \Phi(\omega_0) \right) \right) . \]
Therefore setting $\delta' = g\left( \phi(\omega_0) + \delta, \Phi(\omega_0) \right) - g\left( \phi(\omega_0), \Phi(\omega_0) \right)$, we get
\begin{equation} \label{eqn:skfn34}
\reconstruct\left( z_0 + \left(\begin{array}{c} \delta \\ 0 \end{array}\right) \right) = \reconstruct \left( z_0 \right) + \left(\begin{array}{c} 0 \\ \delta' \end{array}\right), \quad \norm{\delta'} \leq \norm{\partial_1 g}_{\sup} \delta . 
\end{equation}
Thus by Assumption~\ref{A:g_contract}, the map is contractive under $\phi$-perturbations. In light of this observation, it is sufficient to bound the rate of growth of $\Phi$-perturbations by $\lambda_1(f)$.

\paragraph{$\Phi$-perturbations} Next perturb $z_0$ to $z'_0 = \left( \phi(\omega_0), \Phi(\omega_0) + \vec\delta_0 \right)$ for some $\vec\delta_0\in \real^L$. Then
\[ \reconstruct(z'_0) = \reconstruct \left(\begin{array}{c} \phi(\omega_0) \\ \Phi(\omega_0) + \vec\delta_0 \end{array}\right) = \left(\begin{array}{c} \hat{w}\left( \Phi(\omega_0) + \vec\delta_0 \right) \\ g\left( \phi(\omega_0), \Phi(\omega_0)+ \vec\delta_0 \right) \end{array}\right) . \]
We wish to show that if one starts with a $\Phi$-perturbation of $z_0$, then after one iteration of $\reconstruct$, one still ends up with a $\Phi$-perturbation with the image of a perturbed point. More precisely, we have the following picture,
\begin{equation} \label{eqn:fdn38} \begin{tikzcd}
\blue{z_0} \arrow[blue]{r}{+\vec\epsilon_0} \arrow[red]{d}{ + \left(0,\vec{\delta}_0 \right) } & \blue{ z_0'' } \arrow[blue]{r}{\reconstruct} & \blue{z_1} \arrow[red]{d}{ + \left(0,\vec{\delta}_1 \right) } \\
z_0' \arrow{rr}{\reconstruct} & & \reconstruct z'_0
\end{tikzcd} \end{equation}
as described below.

\begin{lemma} \label{lem:sdm83}
Let Assumptions \ref{A:f}, \ref{A:pPhi}, \ref{A:g_contract} and \ref{A:w_ext} hold. Let $z_0 = h(\omega_0)\in X$ and $z'_0$ be a $\Phi$-perturbation of $z_0$ by a vector $\vec\delta\in \real^L$. Then there is a point $z_1\in X$ such that
\begin{enumerate} [(i)]
    \item $z_1 = \reconstruct(z''_0)$ for some point $z''_0\in X$ such that the perturbation $\vec \epsilon_0 := z''_0 - z_0$ has length at most $\Csensr( \omega_0 ) \norm{\vec\delta_0}$ from $z_0$, where
    \[ \Csensr( \omega ) := \left(1 + \Csens(\omega) \right) \kappa_{\retract}, \quad \forall \omega\in \Omega . \]
    \item $\reconstruct(z'_0)$ is a $\Phi$-perturbation of $z_1 = \reconstruct(z''_0)$ with perturbation magnitude at most $\left[ 1 + \Csensr(\omega_0) \right] \norm{\vec\delta_0}$.
\end{enumerate}
\end{lemma}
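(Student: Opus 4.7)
The construction rests on the retraction $\retract$ from Assumption~\ref{A:w_ext}, which I use both to build the point $z_0''$ and to specify a particular extension of $w$ off $\ran\Phi$. Write $y_0' := \Phi(\omega_0) + \vec\delta_0$. For $\|\vec\delta_0\|$ small enough, $y_0'$ lies in the neighborhood $\mathcal{U}$ on which $\retract$ is defined, and $\retract(y_0') \in \ran\Phi$. By injectivity of $\Phi$ (Assumption~\ref{A:pPhi}(i)) there is a unique $\omega_0'' \in \Omega$ with $\Phi(\omega_0'') = \retract(y_0')$; set $z_0'' := h(\omega_0'') \in X$. The critical choice is to let $\hat w := w \circ \retract$ play the role of the feedback in \eqref{eqn:def:feedback_1}; this $\hat w$ lies in $\W$ because $\retract$ is the identity on $\ran\Phi$. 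Setting $z_1 := \reconstruct(z_0'')$, Assumption~\ref{A:pPhi}(ii) gives $z_1 = h(f \omega_0'') \in X$.

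\textbf{Proof of (i).} Using the sum norm on $\real^{d+L}$,
\[\|\vec\epsilon_0\| \;=\; \|\phi(\omega_0'') - \phi(\omega_0)\| + \|\Phi(\omega_0'') - \Phi(\omega_0)\|.\]
Since $\retract$ fixes $\Phi(\omega_0)$, the definition \eqref{eqn:def:kappa_ret} yields $\|\Phi(\omega_0'') - \Phi(\omega_0)\| = \|\retract(y_0') - \retract(\Phi(\omega_0))\| \leq \kappa_{\retract}\|\vec\delta_0\|$. The definition of $\Csens$ then gives, in the infinitesimal limit, $\|\phi(\omega_0'') - \phi(\omega_0)\| \leq \Csens(\omega_0)\|\Phi(\omega_0'') - \Phi(\omega_0)\|$. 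Adding the two estimates gives $\|\vec\epsilon_0\| \leq (1 + \Csens(\omega_0))\kappa_{\retract}\|\vec\delta_0\| = \Csensr(\omega_0)\|\vec\delta_0\|$, as required.

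\textbf{Proof of (ii) and main obstacle.} The choice $\hat w = w \circ \retract$ is engineered precisely so that the first coordinates of $\reconstruct(z_0')$ and $z_1$ coincide: $\hat w(y_0') = w(\retract(y_0')) = w(\Phi(\omega_0'')) = \phi(f\omega_0'')$, which is exactly the first coordinate of $z_1 = h(f\omega_0'')$. Thus the entire discrepancy is concentrated in the second coordinate,
\[\vec\delta_1 \;=\; g(\phi(\omega_0), \Phi(\omega_0) + \vec\delta_0) - g(\phi(\omega_0''), \Phi(\omega_0'')).\]
A mean-value estimate, using Assumption~\ref{A:g_contract} (extended to a neighborhood of $X$ by continuity of $Dg$, with constants bounded by $1$ in the limit $\|\vec\delta_0\| \to 0$), controls this by $\|\phi(\omega_0) - \phi(\omega_0'')\| + \|y_0' - \Phi(\omega_0'')\|$. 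The first summand is $\leq \Csens(\omega_0)\kappa_{\retract}\|\vec\delta_0\|$ as in part (i); for the second, the triangle inequality together with the Lipschitz bound on $\retract$ gives
\[\|y_0' - \Phi(\omega_0'')\| \leq \|y_0' - \Phi(\omega_0)\| + \|\Phi(\omega_0) - \retract(y_0')\| \leq (1 + \kappa_{\retract})\|\vec\delta_0\|.\]
Summing yields $\|\vec\delta_1\| \leq [1 + (1 + \Csens(\omega_0))\kappa_{\retract}]\|\vec\delta_0\| = [1 + \Csensr(\omega_0)]\|\vec\delta_0\|$. The essential insight of the whole argument is the choice $\hat w = w \circ \retract$: without this factorization through the retraction, the first coordinate of $\reconstruct(z_0')$ cannot be made to equal any point of $X$ exactly, and the claim of being a $\Phi$-perturbation of some $z_1 \in X$ would fail. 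Assumption~\ref{A:w_ext} is precisely what is required to enable this canonical extension, so finding the right $\hat w \in \W$ is the chief obstacle; a secondary technical nuisance is that Assumption~\ref{A:g_contract} only bounds $\|\partial_i g\|$ on $X$, so the mean-value step in (ii) requires continuity of $Dg$ on a neighborhood and is tight only in the $\|\vec\delta_0\| \to 0$ limit.
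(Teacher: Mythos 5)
Your proof is correct and follows essentially the same route as the paper's: you pick the extension $\hat w = w\circ\retract$ (the paper writes the same map as $\bar w = (U\phi)\circ\Phi^{-1}\circ\retract$), construct $z_0''$ by retracting $y_0'=\Phi(\omega_0)+\vec\delta_0$ onto $\ran\Phi$, observe that this forces the first coordinates of $\reconstruct(z_0')$ and $z_1$ to agree exactly, and then use the Lipschitz bound on $\retract$, the definition of $\Csens$, and the contractiveness of $g$ to obtain the same two estimates. Your remark that Assumption~\ref{A:g_contract} only controls $\|\partial_i g\|$ on $X$ itself, so the mean-value step requires continuity of $Dg$ near $X$ and is tight only as $\|\vec\delta_0\|\to 0$, is a genuine technical subtlety that the paper's proof elides.
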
 

Before proving Lemma~\ref{lem:sdm83} we show how its repeated application leads to a proof of Theorem~\ref{thm:lambda1}~(iii). Repeated applications of \eqref{eqn:fdn38} gives
\begin{equation} \label{eqn:8iep}
\begin{tikzcd}
\blue{z_0} \arrow[blue]{r}{+\vec\epsilon_0} \arrow[red]{d}{ + \left(0,\vec{\delta}_0 \right) } & \blue{ z_0'' } \arrow[blue]{r}{\reconstruct} & \blue{z_1} \arrow[red]{d}{ + \left(0,\vec{\delta}_1 \right) }  \arrow[blue]{r}{+\vec\epsilon_1} & \blue{ z_1'' } \arrow[blue]{r}{\reconstruct} & \blue{z_2} \arrow[red]{d}{ + \left(0,\vec{\delta}_2 \right) }  \arrow[dotted, blue]{r}{} & \blue{z_n} \arrow[red]{d}{ + \left(0,\vec{\delta}_n \right) } \arrow[blue]{r}{+\vec\epsilon_n} & \blue{z_n''} \arrow[blue]{r}{\reconstruct} & \blue{z_{n+1}} \arrow[red]{d}{ + \left(0,\vec{\delta}_{n+1} \right) }\arrow[blue, dotted]{r}{} & \blue{\cdots} \\
z_0' \arrow{rr}{\reconstruct} & & \reconstruct z'_0 \arrow{rr}{\reconstruct} & & \reconstruct^2 z'_0  \arrow[dotted]{r}{} & \reconstruct^n z'_0 \arrow{rr}{\reconstruct} & & \reconstruct^{n+1} z'_0 \arrow[dotted]{r}{} &\cdots 
\end{tikzcd}
\end{equation}
Thus we have

\begin{lemma} \label{lem:sefjn4}
Let Assumptions \ref{A:f}, \ref{A:pPhi}, \ref{A:g_contract} and \ref{A:w_ext} hold. Then for every $\omega_0\in \Omega$ and any $\Phi$-perturbation $z'_0$ of $z_0 = h(\omega_0)$, there is a sequence of points $z''_0, z''_1, z''_2 \ldots \in X$ such that for every $n\in\num$,
\begin{enumerate}[(i)]
    \item $\reconstruct^n( z'_0)$ is a $\Phi$-perturbation of the point $z_n:= \reconstruct(z''_n)$.
    \item Since the points $z''_n$ and $z_n$ lie on $X$, there is a sequence of points $\omega_n := h^{-1}(z_n)$ on $\Omega$.
    \item The perturbation $\vec\delta_{n} := \reconstruct^n( z'_0) - z_n$ satisfies $\norm{ \vec\delta_{n} } \leq \left[ 1 + \Csensr(\omega_{n-1}) \right] \norm{ \vec\delta_{n-1} }$.
    \item The perturbation $\vec\epsilon_n := z''_n - z_n$ satisfies  $\norm{\vec\epsilon_n} \leq \Csensr(\omega_{n}) \norm{ \vec\delta_{n} }$.
\end{enumerate}
\end{lemma}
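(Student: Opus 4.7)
The plan is to prove the lemma by choosing a specific member $\bar w \in \W$ and tracking how the $\Phi$-perturbation descends to a nearby orbit on the invariant set $X$. Take $\bar w := w \circ \retract$ on the neighborhood $\mathcal{U}$ of $\ran \Phi$ (extended arbitrarily to a global $C^1$ map on $\real^L$ by a partition-of-unity/Whitney extension; this extension only has to lie in $\W$, and since $\retract|_{\ran\Phi} = \Id$, it does). Given the $\Phi$-perturbed point $z'_0 = (\phi(\omega_0), \Phi(\omega_0) + \vec\delta_0)$ with $\vec\delta_0$ small enough that $\Phi(\omega_0)+\vec\delta_0 \in \mathcal{U}$, define $\omega''_0$ as the unique point with $\Phi(\omega''_0) = \retract(\Phi(\omega_0) + \vec\delta_0)$ (uniqueness using Assumption~\ref{A:pPhi}(i)), and set $z''_0 := h(\omega''_0) \in X$ and $z_1 := \reconstruct(z''_0) \in X$.

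For Claim (i), estimate $\vec\epsilon_0 = z''_0 - z_0$ componentwise. Because $\retract$ fixes $\Phi(\omega_0)$ and has local Lipschitz constant $\kappa_{\retract}$,
\[
 \|\Phi(\omega''_0) - \Phi(\omega_0)\| = \|\retract(\Phi(\omega_0)+\vec\delta_0) - \retract(\Phi(\omega_0))\| \le \kappa_{\retract}\|\vec\delta_0\| + o(\|\vec\delta_0\|).
\]
By the definition of $\Csens(\omega_0)$, taking $\vec\delta_0\to 0$ gives $\|\phi(\omega''_0) - \phi(\omega_0)\| \le \Csens(\omega_0)\|\Phi(\omega''_0)-\Phi(\omega_0)\| + o(\|\vec\delta_0\|)$. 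Combining (and using $\sqrt{a^2+b^2}\le a+b$ on the product norm of $\real^{d+L}$) yields $\|\vec\epsilon_0\| \le (1+\Csens(\omega_0))\kappa_{\retract}\|\vec\delta_0\| = \Csensr(\omega_0)\|\vec\delta_0\|$, up to an $o(\|\vec\delta_0\|)$ error that is harmless for the Lyapunov-exponent application.

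For Claim (ii), the key calculation is that the first coordinates of $\reconstruct(z'_0)$ and $z_1 = \reconstruct(z''_0)$ coincide: by construction of $\bar w$,
\[
 \bar w(\Phi(\omega_0)+\vec\delta_0) \;=\; w\bigl(\retract(\Phi(\omega_0)+\vec\delta_0)\bigr) \;=\; w(\Phi(\omega''_0)),
\]
so $\reconstruct(z'_0) - z_1$ has a zero first coordinate, i.e.\ it is a genuine $\Phi$-perturbation of $z_1$. Its $\Phi$-component is $g(\phi(\omega_0), \Phi(\omega_0)+\vec\delta_0) - g(\phi(\omega''_0), \Phi(\omega''_0))$. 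Splitting this into two one-variable differences and using Assumption~\ref{A:g_contract} (each partial is $\le 1$) bounds the $\phi$-slot change by $\|\phi(\omega_0) - \phi(\omega''_0)\| \le \Csens(\omega_0)\kappa_{\retract}\|\vec\delta_0\|$, and the $\Phi$-slot change by $\|\vec\delta_0 - (\Phi(\omega''_0) - \Phi(\omega_0))\| \le (1+\kappa_{\retract})\|\vec\delta_0\|$. Their sum is exactly $[1+(1+\Csens(\omega_0))\kappa_{\retract}]\|\vec\delta_0\| = [1+\Csensr(\omega_0)]\|\vec\delta_0\|$, giving (ii).

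The main subtlety, and the only place the argument is not entirely mechanical, is the passage from the \emph{infinitesimal} relation $\|\phi(\omega''_0)-\phi(\omega_0)\| \lesssim \Csens(\omega_0)\|\Phi(\omega''_0)-\Phi(\omega_0)\|$ to a usable Lyapunov estimate. The point is that the downstream use of Lemma~\ref{lem:sdm83} (to prove Theorem~\ref{thm:lambda1}(iii) via the pseudo-trajectory picture in diagram \eqref{eqn:8iep} and Proposition~\ref{prop:aedf2}) only requires the $\limsup_{\delta\to 0^+}\tfrac 1 \delta$-growth of the deviation, so the $o(\|\vec\delta_0\|)$ errors above are swallowed harmlessly. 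The remaining bookkeeping task, which I would defer but flag, is to verify that $w\circ\retract$ admits a $C^1$ extension to all of $\real^L$ belonging to $\W$; this is routine topology given Assumption~\ref{A:w_ext} and does not affect the per-step multiplicative factor $1+\Csensr(\omega_n)$ that later feeds into the $\int\ln[1+\Csensr(\omega)]\,d\mu$ bound of Theorem~\ref{thm:lambda1}(iii).
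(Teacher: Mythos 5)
Your proposal is correct and takes essentially the same approach as the paper: the retraction-based extension $\bar w = w\circ\retract$ coincides with the paper's $\bar w = (U\phi)\circ\Phi^{-1}\circ\retract$, your $\omega''_0$ is the paper's $\omega'_0 = \alpha(z'_0)$, and the two key observations --- that the first coordinate of $\reconstruct(z'_0)$ coincides exactly with that of $z_1$ by construction, and that the per-step multiplicative factor is $1+\Csensr(\omega_0)$ --- are exactly those of the paper's single-step Lemma~\ref{lem:sdm83}. The iteration you defer is precisely how the paper derives Lemma~\ref{lem:sefjn4} from Lemma~\ref{lem:sdm83} (the diagram~\eqref{eqn:8iep}), so nothing essential is missing.
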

Combining Lemma~\ref{lem:sefjn4} (ii) and (iii) gives
\[
\norm{ \vec{\epsilon}_n } = d\left( z''_{n}, z_{n} \right) \leq \norm{ \vec\delta_{0} } \Csensr(\omega_{n}) \prod_{i=0}^{n} \left[ 1 + \Csensr(\omega_{i}) \right] .
\]
Thus the sequence $z_n$ is a pseudo trajectory and similar to Proposition~\ref{prop:srg7} it follows that
\[ \inf_{\bar{w}\in \W} \lambda_1(\bar{w}) - \lambda_1(f,\mu) \leq \int \ln \left[ 1 + \left(1 + \Csens(\omega) \right) \kappa_{\retract} \right] d\mu(\omega) . \]
This completes the proof of  Claim~(iii) and of the theorem. \qed

\subsection{Proof of Lemma~\ref{lem:sdm83}}

Lemma~\ref{lem:sdm83} is where Assumption~\ref{A:w_ext} is needed. We first show how a neighborhood retraction of the attractor leads to an extension $\bar{w}$ of $w$.

\paragraph{$\bar{w}$ from retraction} Since $\retract$ is a retraction, we have $\retract|X \equiv \Id_X$. Let $\proj_2 : \real^{d+L}\to \real^L$ be the projection onto the last $L$ coordinates. Note that $\mathcal{U}_X := \proj_2^{-1}(\mathcal{U})$ is a neighborhood of $X$ in $\real^{d+L}$,. Now define
\[ \bar{w} = (U\phi)\circ \Phi^{-1} \circ \retract : \mathcal{U} \to \ran \phi , \]
and
\[ \alpha := \Phi^{-1} \circ \retract \circ \proj_2 : \mathcal{U}_X \to \Omega . \]
Then we have the following commutations.
\begin{equation} \label{eqn:def:wbar_retrct}
\begin{tikzcd}[column sep = large]
 & & \Omega \arrow{d}[swap]{U\phi} \arrow{ddr}{\Phi} & \\
\mathcal{U}_X \arrow[dashed]{urr}{\alpha} \arrow{r}[swap]{\proj_2} & \mathcal{U} \arrow[red]{drr}{ \red{\retract} } \arrow[dashed, blue]{r}{ \blue{ \bar{w} } } & \ran \phi  & \\
X \arrow{u}{\subset} \arrow{r}[swap]{\proj_2} & \ran \Phi \arrow{ur}[swap]{w} \arrow{u}[swap]{\subset} \arrow{rr}[swap]{\Id} & & \ran \Phi \arrow[bend right = 40]{uul}[swap]{\Phi^{-1}}
\end{tikzcd}
\end{equation}
Note that by definition, $\alpha$ is a continuous map which coincides with $\Phi^{-1} \circ \proj_2$ when restricted to $\ran\Phi$. Moreover, 
\begin{equation} \label{eqn:duu3a}
U\phi\circ \alpha = \bar{w}\circ \proj_2.
\end{equation}

\paragraph{The construction} Set $\omega'_0 = \alpha( z_0')$ and $z''_0 := h(\omega'_0)$ and $\omega_1 := f(\omega'_0)$, as shown below :
\[ \begin{tikzcd}[column sep = large]
\omega_0\ \arrow{d}{h} & & \omega'_0 \arrow{d}{h} \arrow{r}{f} & \omega_1 \arrow{d}{h} \\
z_0 \arrow[]{r}{ +(0, \vec{\delta}_0) } & z_0' \arrow{ur}{\alpha} & z''_0 \arrow{r}{\reconstruct} & z_1
\end{tikzcd} \]

\paragraph{Proof of Claim (i)} We first obtain a bound for $\Phi(\omega'_0) - \Phi\left( \omega_0 \right)$.  Note that
\[\begin{split}
\Phi(\omega'_0) &= \Phi\circ \alpha\left( z_0 + (0, \vec{\delta}_0) \right) = \Phi\circ \Phi^{-1} \circ \retract \circ \proj_2 \left( z_0 + (0, \vec{\delta}_0) \right)   = \retract \circ \proj_2 \left( z_0 + (0, \vec{\delta}_0) \right) \\
&= \retract\left( \Phi(\omega_0) + \vec\delta_0 \right) .
\end{split}\]
Therefore
\begin{equation} \label{eqn:sdkn9}
\norm{ \Phi(\omega'_0) - \Phi\left( \omega_0 \right) } = \norm{ \retract\left( \Phi(\omega_0) + \vec\delta_0 \right) - \retract\left( \Phi(\omega_0) \right) } \leq \kappa_{\retract} \norm{\vec\delta_0} .
\end{equation}
We next estimate the gap $\phi(\omega'_0) - \phi( \omega_0)$. By the definition of the constant $\Csens(\omega_0)$ and by \eqref{eqn:sdkn9},
\begin{equation} \label{eqn:jb37s}
\norm{ \phi(\omega'_0) - \phi(\omega_0) } \leq \Csens(\omega_0) \norm{ \Phi(\omega'_0) - \Phi\left( \omega_0 \right) } \leq \Csens(\omega_0) \kappa_{\retract} \norm{\vec\delta_0}.
\end{equation}
Equip the space $\real^{d+L}$ with the norm $\norm{(x,y)}_{\real^{d+L}} := \norm{x}_{\real^{d}} + \norm{y}_{\real^{L}}$. Then we have 
\[  \begin{split}
    \vec\epsilon_0 &= \norm{ z''_0 - z'_0 } = \norm{ \Matrix{ \begin{array}{c} \phi(\omega'_0) \\ \Phi(\omega'_0) \end{array} } - \Matrix{ \begin{array}{c} \phi(\omega_0)  \\ \Phi(\omega_0) \end{array} } } = \norm{ \phi(\omega'_0) - \phi(\omega_0) } + \norm{ \Phi(\omega'_0) - \Phi(\omega_0) - \delta_0 } \\
    & \leq \left(1 + \Csens(\omega_0) \right) \kappa_{\retract}  \norm{\vec\delta_0}, \quad \mbox{ by \eqref{eqn:sdkn9}, \eqref{eqn:jb37s} } , \\
    &= \Csensr(\omega_0) \norm{ \vec{\delta}_0 } .
\end{split}\]
Similarly, we have
\begin{equation} \label{eqn:s6kmp3}
    \norm{ z''_0 - z'_0 } \leq \norm{ z''_0 - z_0 } + \norm{ z_0 - z'_0 } = \left[ 1 + \Csensr(\omega_0) \right] \norm{\vec\delta_0}.
\end{equation}
This completes the proof of Clam~(i).

\paragraph{Proof of Claim (ii)} Next, by the contractiveness of $g$ from Assumption~\ref{A:g_contract},
\begin{equation} \label{eqn:nmx32}
\norm{ g\left( z''_0 \right) - g\left( z'_0 \right) } \leq \norm{ z''_0 - z'_0} \stackrel{ \mbox{by \eqref{eqn:s6kmp3}} }{\leq} \left[ 1 + \Csensr(\omega_0) \right] \norm{\vec\delta_0} . 
\end{equation}
We have from definition :
\[ \proj_2(z_1) = \proj_2\circ h(\omega_1) = \proj_2\circ h \circ f (\omega'_0) = \proj_2\circ \reconstruct \circ h(\omega'_0) = g(z''_0) . \]
Thus
\begin{equation} \label{eqn:fns3}
\norm{\proj_2(z_1) - \proj_2\circ \reconstruct( z'_0)} = \norm{g(z''_0)) - g(z'_0) } \stackrel{ \mbox{by \eqref{eqn:nmx32}} }{\leq} \left[ 1 + \Csensr(\omega_0) \right] \norm{\vec\delta_0} .
\end{equation}
Finally set $y':= \Phi(\omega_0) + \vec\delta$. Then note that
\[\begin{split}
\proj_1(z_1) &= \proj_1\circ h(\omega_1) = \proj_1\circ h \circ f (\omega'_0) = \proj_1\circ \reconstruct \circ h(\omega'_0) = w\circ \proj_2\circ h(\omega'_0) \\
&= w\circ \Phi(\omega'_0) = w\circ \Phi\circ \Phi^{-1} \circ \retract( y') = \bar{w}(y') \\
& = \proj_1\circ \reconstruct (z'_0).
\end{split}\]
This completes the proof of Claim~(ii) and thus of the lemma. \qed


\subsection{ Proof of Corollary \ref{cor:delay_stab} } \label{sec:proof:delay_stab}

Corollary \ref{cor:delay_stab} satisfies the conditions of Theorem~\ref{thm:lambda1}. The claim will be proved if it can be shown that $\Csens \leq \frac{1}{Q} +Q \bigO{\Delta t}$.
\[ D\Phi(\omega) = \left( D\Phi\left( \Psi^{0\Delta t} \omega \right), D\Phi\left( \Psi^{-\Delta t} \omega \right), \ldots, D\Phi\left( \Psi^{-(Q-1)\Delta t} \omega \right) \right) . \]
Since $\phi$ is a $C^2$ function, 
\[ \norm{ D\phi\left( \Psi^{q\Delta t} \omega \right) - D\Phi\left( \omega \right) } = \bigO{ |q|\Delta t }, \mbox{ as } \Delta t\to 0^+. \]
Thus
\[ \norm{D\Phi(\omega)} = Q\norm{D\phi\left( \omega \right)} + \sum_{q=0}^{Q} \bigO{ q\Delta t } = Q\norm{D\phi\left( \omega \right)} + Q^2\bigO{ \Delta t }, \mbox{ as } \Delta t\to 0^+. \]
Therefore
\[ \Csens(\omega) = \frac{ \norm{ D\phi(\omega) } }{ \norm{ D\Phi(\omega) } } = \frac{ \norm{ D\phi(\omega) } }{ Q\norm{ D\phi(\omega) } + Q^2 \bigO{\Delta t} } = \frac{1}{Q} +Q \bigO{\Delta t}, \; \mbox{ as } \Delta t\to 0^+.
\]
This proves the claim. \qed

\section{Proof of Theorem~\ref{thm:direct}} \label{sec:proof:direct}

By \eqref{eqn:kcnv83}, the direct forecast error can be expressed in terms of the Koopman operator as
\[ \text{error}_{\text{direct}}(n) := \norm{ U^n \phi - \proj_{\mathcal{W}} U^n\phi }_{L^2(\mu)} = \norm{ \left( \Id - \proj_{\mathcal{W}} \right) U^n\phi }_{L^2(\mu)} , \]
as claimed. To proceed further, we have to separately examine the components of $\phi$ along $\Disc$ and its complement. For that purpose, define
\[ \phi^{(d)} := \proj_{\Disc} \phi, \quad \phi^{(c)} := \phi - \phi^{(d)} . \]
This decomposition is possible due to the linearity of $U^n$ and the invariance of the subspaces $\Disc, \Disc^\bot$. Therefore
\begin{equation} \label{eqn:adjp30}
\text{error}_{\text{direct}}(n)^2 = \norm{ \left( \Id - \proj_{\mathcal{W}} \right) U^n\phi }_{L^2(\mu)}^2 = \norm{ \left( \Id - \proj_{\mathcal{W}} \right) U^n\phi^{(c)} }_{L^2(\mu)}^2 + \norm{ \left( \Id - \proj_{\mathcal{W}} \right) U^n\phi^{(d)} }_{L^2(\mu)}^2 .
\end{equation}
We call them the discrete and continuous components respectively, and analyze them separately.

\paragraph{Continuous component}

We begin with a review of some concepts from ergodic theory related to mixing.

\begin{lemma} \label{lem:mixing} [Weak mixing]
Let $(\Omega, \mu, f)$ be a measure preserving system, with the splitting as in \eqref{eqn:def:L2split}. Then for every $\phi_1, \in \Disc^\bot $ and every $\phi_2 \in L^2(\mu)$,
\[\begin{split}
& \lim_{N\to\infty} \frac{1}{N} \sum_{n=0}^{N-1} \abs{ \left\langle \phi_1, U^n \phi_2 \right\rangle_{L^2(\mu)} - \mu\left( \phi_1 \right) \mu\left( \phi_2 \right) } = 0 .\\
& \lim_{N\in \num', N\to\infty} \left\langle \phi_1, U^n \phi_2 \right\rangle_{L^2(\mu)} = \mu\left( \phi_1 \right) \mu\left( \phi_2 \right) ,
\end{split}\] 
where $\num'$ is a subset $\num$ with density $1$.
\end{lemma}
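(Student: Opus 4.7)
The first observation is that $\mu(\phi_1) = \langle \phi_1, 1_\Omega\rangle_{L^2(\mu)} = 0$ since $1_\Omega$ is an eigenfunction of $U$ and therefore lies in $\Disc$, while $\phi_1 \in \Disc^\bot$. Hence the right-hand side of both claims is simply $0$. Next, decompose $\phi_2 = \phi_2^{(d)} + \phi_2^{(c)}$ with $\phi_2^{(d)} \in \Disc$ and $\phi_2^{(c)} \in \Disc^\bot$. Because $\Disc$ is $U$-invariant, $U^n \phi_2^{(d)} \in \Disc$, so $\langle \phi_1, U^n \phi_2^{(d)}\rangle = 0$ for every $n$. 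This reduces the claim to proving that $\langle \phi_1, U^n\phi_2^{(c)}\rangle \to 0$ in the Cesàro sense (and along a density-one subsequence) when both $\phi_1, \phi_2 \in \Disc^\bot$.

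The second step is to invoke the spectral theorem for the unitary operator $U$. For any pair $\phi_1, \phi_2 \in L^2(\mu)$ there is a complex spectral measure $\sigma_{\phi_1, \phi_2}$ on the unit circle $S^1 \subset \complex$ such that
\[
\langle \phi_1, U^n \phi_2\rangle_{L^2(\mu)} = \int_{S^1} z^n \, d\sigma_{\phi_1, \phi_2}(z), \qquad n \in \integer.
\]
The atoms of $\sigma_{\phi_1, \phi_1}$ correspond exactly to the eigenvalues of $U$ for which $\phi_1$ has a nonzero component, so if $\phi_1 \in \Disc^\bot$, then $\sigma_{\phi_1, \phi_1}$ is a continuous (non-atomic) measure. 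By the pointwise Cauchy--Schwarz inequality $|\sigma_{\phi_1, \phi_2}(\{z\})|^2 \leq \sigma_{\phi_1,\phi_1}(\{z\})\, \sigma_{\phi_2,\phi_2}(\{z\})$, the complex measure $\sigma_{\phi_1, \phi_2}$ is also continuous.

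The third step is Wiener's theorem on Fourier coefficients of measures: for any finite complex Borel measure $\sigma$ on $S^1$,
\[
\lim_{N\to\infty} \frac{1}{N} \sum_{n=0}^{N-1} |\hat\sigma(n)|^2 = \sum_{z \in S^1} |\sigma(\{z\})|^2.
\]
Applied to $\sigma_{\phi_1, \phi_2}$, which has no atoms, the right-hand side vanishes, so $\frac{1}{N}\sum_{n=0}^{N-1}|\langle \phi_1, U^n\phi_2\rangle|^2 \to 0$. By the Cauchy--Schwarz inequality in $n$,
\[
\frac{1}{N} \sum_{n=0}^{N-1} |\langle \phi_1, U^n \phi_2\rangle| \leq \left(\frac{1}{N} \sum_{n=0}^{N-1} |\langle \phi_1, U^n \phi_2\rangle|^2\right)^{1/2} \longrightarrow 0,
\]
which proves the first assertion.

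The fourth step is to extract a density-one subsequence. This is the Koopman--von Neumann lemma: if $(a_n)$ is a bounded sequence of non-negative reals with $\frac{1}{N}\sum_{n=0}^{N-1} a_n \to 0$, then there is a subset $\num' \subseteq \num$ of density one such that $a_n \to 0$ along $\num'$. Applying this to $a_n := |\langle \phi_1, U^n\phi_2\rangle|$ yields the second assertion. No step presents a genuine obstacle — all ingredients are classical — but the one point requiring care is justifying that $\sigma_{\phi_1, \phi_2}$ is non-atomic using only the assumption $\phi_1 \in \Disc^\bot$; the polarization/Cauchy--Schwarz bound above handles this cleanly without needing $\phi_2$ to lie in $\Disc^\bot$.
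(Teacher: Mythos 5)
Your proof is correct and, since the paper's own ``proof'' of this lemma is simply a citation to Halmos's Mixing Theorem and to Liao et al., your spectral-measure / Wiener's-theorem / Koopman--von Neumann argument is precisely the classical proof underlying those citations, so you are taking essentially the same route, just spelling it out. The only cosmetic observation worth keeping is the one you make at the end: the decomposition $\phi_2 = \phi_2^{(d)} + \phi_2^{(c)}$ is redundant once you note that $\lvert\sigma_{\phi_1,\phi_2}(\{z\})\rvert^2 \le \sigma_{\phi_1,\phi_1}(\{z\})\,\sigma_{\phi_2,\phi_2}(\{z\})$ already forces $\sigma_{\phi_1,\phi_2}$ to be non-atomic from $\phi_1\in\Disc^\bot$ alone.
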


\begin{proof} The first identity follows from \citep[][Mixing Theorem, pg 45]{Halmos1956}. The second identity follows from \cite[][Sec 2.1]{LiaoEtAl2018}.
\end{proof}

If $\num'$ above can be taken to be $\num$ and $\Disc = \{\text{constant}\}$, then the system $(\Omega, f, \mu)$ will be called \emph{strongly mixing}. In other words the following holds.
\begin{equation} \label{eqn:strongmixing}
\lim_{N\to\infty} \left\langle \phi_1, U^N \phi_2 \right\rangle_{L^2(\mu)} = \mu\left( \phi_1 \right) \mu\left( \phi_2 \right) , \quad \forall \phi_1, \phi_2 \in L^2(\mu) .
\end{equation}

We are now ready to prove the following :
\begin{equation} \label{eqn:kj38}
\lim_{n\in \num', n\to\infty} \norm{ \left( \Id - \proj_{\mathcal{W}} \right) U^n\phi^{(c)} }_{L^2(\mu)} = \norm{\phi^{(c)}}_{L^2(\mu)} .
\end{equation}
We will in fact prove the stronger result
\begin{equation} \label{eqn:dp93}
\lim_{n\in \num', n\to\infty} \proj_{\mathcal{W}} U^n\phi^{(c)} = 0.
\end{equation}

\paragraph{Proof of \eqref{eqn:dp93}} By \eqref{eqn:def:W}, $\mathcal{W}$ is spanned by a finite orthonormal basis $\{ \Hbasis_i : i=1,\ldots,M\}$. Then note that
\[ \proj_{\mathcal{W}} \psi = \sum_{i=1}^M \langle \Hbasis_i, \psi \rangle_{L^2(\mu)} \Hbasis_i, \quad \forall \psi\in L^2(\mu) .  \]
Therefore,
\[ \lim_{n\in\num', n\to\infty} \proj_{\mathcal{W}} U^n \phi^{(c)} = \lim_{n\in\num', n\to\infty} \sum_{i=1}^M \langle \Hbasis_i, U^n\phi^{(c)} \rangle_{L^2(\mu)} \Hbasis_i  = \sum_{i=1}^M \lim_{n\in \num', n\to\infty} \langle \Hbasis_i, U^n\phi^{(c)} \rangle_{L^2(\mu)} \Hbasis_i = \sum_{i=1}^M \mu(\Hbasis_i) \mu(\phi^{(c)}) \mathfrak{w_i} = 0 . \]
The identity in \eqref{eqn:dp93} now follows. \qed

\paragraph{Discrete component}  Next, let $z_1, z_2, \ldots$ be an orthonormal basis for $\Disc$, in terms of the Koopman eigenfunctions. Then one has 
\[ \proj_{\Disc} \Hbasis_l = \sum_j a_{l,j} z_j \quad 1\leq l\leq M, \quad a_{l,j} := \langle z_j, \Hbasis_l \rangle_{L^2(\mu)}. \]
Let $\Pi$ be the $\num\times\num$ matrix defined as $\Pi_{j,k} := \left\langle \pi z_j, \pi z_k \right\rangle_{L^2\mu)}$. Then
\[ \sum_{l=1}^M a_{l,k}^* a_{l,j} = \sum_{l=1}^M \langle \Hbasis_l, z_k \rangle_{L^2(\mu)} \langle z_j, \Hbasis_l \rangle_{L^2(\mu)} = \left\langle \sum_{l=1}^M \langle z_k, \Hbasis_l \rangle_{L^2(\mu)} \Hbasis_l, \sum_{l=1}^M \langle z_j, \Hbasis_l \rangle_{L^2(\mu)} \Hbasis_l \right\rangle_{L^2(\mu)} = \left\langle \pi z_j, \pi z_k \right\rangle_{L^2(\mu)} = \Pi_{j,k}. \]
Now let $\phi^{(d)} = \sum_j \phi_j z_j$. Then $U^n \phi^{(d)} = \sum_j \phi_j e^{\iota\omega_j n} z_j$. Therefore,
\[ \left\langle \Hbasis_l, U^n \phi^{(d)} \right\rangle_{L^2(\mu)} = \left\langle \proj_{\Disc} \Hbasis_l, U^n \phi^{(d)} \right\rangle_{L^2(\mu)} = \sum_j \phi_j e^{\iota\omega_j n} \langle \Hbasis_l, z_j \rangle_{L^2(\mu)} = \sum_j \phi_j e^{\iota\omega_j n} a^*_{l,j} . \]
Therefore,
\[ \pi U^n \phi^{(d)} = \sum_{l=1}^{M} \left\langle \Hbasis_l, U^n \phi \right\rangle_{L^2(\mu)} \Hbasis_l = \sum_{l=1}^{M} \sum_j \phi_j e^{\iota\omega_j n} a^*_{l,j} \Hbasis_l . \]
Note that the infinite sequence $\vec{\phi} := \left( \phi_j \right)_{j\in\num}$ is an $\ell^2$ sequence. Define the operator 
\[\Fourier : \ell^2\to \ell^2, \quad \left( \Fourier \vec{\phi} \right)_j := e^{\iota\omega_j} \phi_j, \quad \forall j\in\num.\]
Then 
%
\[ \norm{ \pi U^n \phi^{(d)} }_{L^2(\mu)}^2 = \sum_{k,j} \phi_k^* \phi_j e^{\iota\omega_j n} e^{-\iota\omega_k n} \sum_{l=1}^M a_{l,k}^* a_{l,j} = \left( \Fourier^n\vec{\phi} \right)^* \Pi \left( \Fourier^n\vec{\phi} \right) .\]
Therefore
\begin{equation} \label{eqn:lmd93c}
\norm{ \left( \Id - \proj_{\mathcal{W}} \right) U^n\phi^{(d)} }_{L^2(\mu)}^2 = \vec{\phi}^* \Fourier^{n*} \left[ \Id - \Pi \right] \Fourier^n \vec{\phi} .
\end{equation}
The operator $\Fourier$ is a unitary operator which is diagonal with respect to the usual basis of $\ell^2$. Thus
\begin{equation} \label{eqn:dsm40}
\lim_{\Pi \to \Id } \norm{ \left( \Id - \proj_{\mathcal{W}} \right) U^n\phi^{(d)} }_{L^2(\mu)}^2 = 0.
\end{equation}
If the hypothesis space is increased, then $\Pi$ converges strongly to $\Id$ and the above limit is approached. Thus for any $\epsilon>0$, if $\mathcal{W}$ is large enough, then $\norm{ \left( \Id - \proj_{\mathcal{W}} \right) U^n\phi^{(d)} }_{L^2(\mu)}^2 < \epsilon$.

\paragraph{Proof of Theorem~\ref{thm:direct}} Claim~(i) follows from \eqref{eqn:adjp30}, \eqref{eqn:dp93} and \eqref{eqn:dsm40}. In Claim~(ii), if $(f,\mu)$ is weakly mixing, then $\Disc = \{\text{constant}\}$, thus $\phi^{(d)}$ is just the average $\mu(\phi)$. The claim now follows from \eqref{eqn:adjp30} and \eqref{eqn:dp93}. Claim~(iii) follows from the definition of strong mixing and \eqref{eqn:strongmixing}. In Claim~(iv), $\Disc^\bot = \{0\}$, and the claim follows from \eqref{eqn:dsm40}. \qed.

\section{Proof of Theorem~\ref{thm:iterative}} \label{sec:proof:iterative}

We next look at the iterates of the map $\reconstruct$ in \eqref{eqn:def:feedback_1}, with initial conditions in \eqref{eqn:feedback_1_init}. Let $\hat{w} = \hat{w}_1$ as in \eqref{eqn:kcnv83}. The following identity will be used repeatedly.
\begin{equation} \label{eqn:hj3sn3}
\hat{w}\left( U^n \Phi \right) = \hat{w} \circ \Phi \circ f^n \stackrel{ \mbox{ by \eqref{eqn:kcnv83} } }{=} \left( \proj_{\mathcal{W}} U\phi \right) \circ f^n = U^n \proj_{\mathcal{W}} U \phi = U^n \pi U \phi, \quad \forall n\in\num .
\end{equation}
The proof of \eqref{eqn:u_vs_a} will be by induction on $n$. For the base case, note that
\[ z_1 = z_1(\omega_0) = \reconstruct( z_0 ) = \left[ \hat{w}_1\left( \Phi \right), g\circ\left( \phi, \Phi \right) \right] \stackrel{ \eqref{eqn:kcnv83} }{=} \left[ \pi U\phi, U\Phi \right] , \] 
and thus $\Delta u_1 = a_1 = 0^d$ and $\Delta y_1 = b_1 =0^L$. Next suppose that the statement is true up to some $n\in\num$. Using the notation in \eqref{eqn:def:Delta_uy} we have
\[\begin{split}
u_{n+1} &= \hat{w}\left( y_n \right) = \hat{w}\left( U^n\Phi - \Delta y_n \right) = \hat{w}\left(  U^n\Phi \right) - D\hat{w} \vert_{U^n\Phi} \Delta y_n + \bigO{ \norm{\Delta y_n}^2 } \\
&= U^n \pi U \phi - \hat{W}\left( f^n (\cdot) \right) \Delta y_n + \bigO{ \norm{\Delta y_n}^2 }, \quad \mbox{ by \eqref{eqn:def:WG_matrices}, \eqref{eqn:hj3sn3}}.
\end{split}\]
So 
\[\Delta u_{n+1} := U^n \pi U \phi - u_{n+1} = \hat{W}\left( f^n (\cdot) \right) \Delta y_n + \bigO{ \norm{\Delta y_n}^2 } . \]
Similarly,
\[\begin{split}
y_{n+1} &= g\left( u_n, y_n \right) = g\left( U^{n-1} \pi U \phi - \Delta u_n, U^n\Phi - \Delta y_n \right) = g\left( U^n \phi - \Delta u_n - U^{n-1} \Delta U \phi, U^n\Phi - \Delta y_n \right) \\
&= g\left( U^n \phi, U^n\Phi \right) - \nabla_1 g \vert_{ h\circ f^n } \left( \Delta u_n + U^{n-1} \Delta U \phi \right)  - \nabla_2 g \vert_{ h\circ f^n } \Delta y_n + \bigO{ \norm{\Delta u_n}^2 } + \bigO{ \norm{\Delta y_n}^2 } \\
&= U^{n+1} \Phi - G^{(1)} \left( f^n(\cdot) \right) \Delta u_n - G^{(2)} \left( f^n(\cdot) \right) \Delta y_n + c\left( f^n(\cdot)  \right) + \bigO{ \norm{\Delta u_n}^2 } + \bigO{ \norm{\Delta y_n}^2 }.
\end{split}\]
So 
\[\Delta y_{n+1} := U^{n+1} \Phi - y_{n+1} = G^{(1)} \left( f^n(\cdot) \right) \Delta u_n + G^{(2)} \left( f^n(\cdot) \right) \Delta y_n + c\left( f^n(\cdot)  \right) + \bigO{ \norm{\Delta u_n}^2 } + \bigO{ \norm{\Delta y_n}^2 } . \]
Combining we get
\begin{equation} \label{eqn:lpn3}
\Matrix{ \begin{array}{c} u_{n+1} \\ y_{n+1} \end{array} } = 
\hat{M} \left( f^n(\cdot) \right) \Matrix{ \begin{array}{c} u_{n} \\ y_{n} \end{array} }
 + \Matrix{ \begin{array}{c} 1 \\ c\left( f^n(\cdot) \right) \end{array} }
 + \Matrix{ \begin{array}{c} \bigO{ \norm{\Delta y_n}^2 } \\  \bigO{ \norm{\Delta u_n}^2 } + \bigO{ \norm{\Delta y_n}^2 } \end{array} }
\end{equation}
The evolution equation~\ref{eqn:lpn3} for $(u_n, y_n)$ is thus the addition of the Taylor series error terms to the evolution equation \eqref{eqn:ab_exact} for $(a_n, b_n)$. Claim~(i) and \eqref{eqn:u_vs_a} immediately follows. Since the evolution of $(a_n, b_n)$ is that of a perturbed random cocycle, Theorem~\ref{thm:GraphT_growth} applies and Claims~(ii) and (iii) follow. This completes the proof of Theorem~\ref{thm:iterative}. \qed

\bibliographystyle{abbrvnat}
\bibliography{\Path References,\Path temp}
\end{document}